\documentclass[12pt,a4paper,reqno]{amsart}
\usepackage{cite}
\usepackage[alphabetic]{amsrefs}

\usepackage{amsmath,amssymb,amsfonts,amsthm,epsfig,graphicx}
\usepackage{graphicx,color}
\usepackage{hyperref}
\usepackage{url}

\headheight .3cm
\headsep .4cm
\topskip 0cm
\footskip 0.5cm
\oddsidemargin 0.3cm
\evensidemargin 0.3cm
\topmargin .5cm
\textwidth 16.0cm
\textheight 23cm

\numberwithin{equation}{section}
\numberwithin{figure}{section}

\renewcommand{\leq}{\leqslant}
\renewcommand{\le}{\leqslant}

\renewcommand{\ge}{\geqslant}
\renewcommand{\epsilon}{\varepsilon}
\newcommand{\e}{\varepsilon}
\newcommand{\betxxalf}{{{\alpha}}}
\newcommand{\alphxx}{{{\alpha}}}
\newcommand{\alphyy}{{\tilde{\alpha}}}

\newtheorem{theorem}{Theorem}[section]
\newtheorem{lemma}[theorem]{Lemma}
\newtheorem{corollary}[theorem]{Corollary}
\newtheorem{proposition}[theorem]{Proposition}
\newtheorem*{theorem*}{Theorem}
\newtheorem*{corollary*}{Corollary}

\def\R{\mathbb R}
\def\N{\mathbb N}

\allowbreak
\allowdisplaybreaks

\setcounter{tocdepth}{1}

\begin{document}

\title[Nonlocal minimal graphs in the plane
are generically sticky]{Nonlocal minimal graphs in the plane
\\ are generically sticky}

\author[Serena Dipierro,
Ovidiu Savin, and Enrico Valdinoci]{Serena Dipierro${}^{(1)}$\and
Ovidiu Savin${}^{(2)}$
\and
Enrico Valdinoci${}^{(1)}$}

\maketitle

{\scriptsize \begin{center} (1) -- Department of Mathematics and Statistics\\
University of Western Australia\\ 35 Stirling Highway, WA6009 Crawley (Australia)\\
\end{center}
\scriptsize \begin{center} (2) --
Department of Mathematics\\
Columbia University\\
2990 Broadway, NY 10027
New York (USA)
\end{center}
\bigskip

\begin{center}
E-mail addresses:
{\tt serena.dipierro@uwa.edu.au},
{\tt savin@math.columbia.edu},
{\tt enrico.valdinoci@uwa.edu.au}
\end{center}
}
\bigskip\bigskip
\begin{abstract}
We prove that nonlocal minimal graphs in the plane
exhibit generically stickiness effects and boundary discontinuities.
More precisely, we show that if
a nonlocal minimal graph in a slab is continuous up to the boundary,
then arbitrarily small perturbations of the far-away data 
produce boundary discontinuities.

Hence, either a nonlocal minimal graph is discontinuous at the boundary, or a small perturbation of the prescribed conditions produces
boundary discontinuities.

The proof relies on a sliding method combined with a fine boundary regularity
analysis,
based on a discontinuity/smoothness alternative.
Namely, we establish that nonlocal minimal graphs are either discontinuous
at the boundary or their derivative is H\"older continuous up to the
boundary. In this spirit, we prove that the boundary
regularity of nonlocal minimal graphs in the plane ``jumps'' from
discontinuous to~$C^{1,\gamma}$, with no intermediate possibilities
allowed.

In particular, we deduce that the nonlocal curvature equation
is always satisfied up to the boundary.

As an interesting byproduct of our analysis, one obtains
a detailed understanding of the ``switch'' between
the regime of continuous (and hence differentiable) nonlocal minimal
graphs to that of discontinuous (and hence with differentiable inverse)
ones.
\end{abstract}

\section{Introduction}

Nonlocal minimal surfaces have been introduced in~\cite{MR2675483}
with the aim of comprising long range effects in a variational problem resembling
the one arising from the minimization of the classical perimeter functional
and of modeling concrete problems in which
remote interactions play a decisive role.

Specifically, given~$s\in(0,1)$, one considers the long range interaction
of two disjoint (measurable) sets~$F$, $G\subseteq\R^n$ given by
\begin{equation}\label{FeG} {\mathcal{I}}_s(F,G):=\iint_{F\times G}\frac{dx\,dy}{|x-y|^{n+s}}.\end{equation}
For a bounded reference domain~$\Omega$ with Lipschitz boundary,
one also defines the $s$-perimeter of a set~$E\subseteq\R^n$ in~$\Omega$ as
$$ {\rm Per}_s(E;\Omega):=
{\mathcal{I}}_s(E\cap\Omega,E^c\cap\Omega)+
{\mathcal{I}}_s(E\cap\Omega,E^c\cap\Omega^c)+
{\mathcal{I}}_s(E\cap\Omega^c,E^c\cap\Omega),$$
where the superscript ``$c$'' denotes the complementary set in~$\R^n$.

One says that~$E$ is~$s$-minimal in~$\Omega$ if~${\rm Per}_s(E;\Omega)<+\infty$ and
\begin{equation}\label{MINIMIZZ} {\rm Per}_s(E;\Omega)\le {\rm Per}_s(E';\Omega)\end{equation}
for every~$E'\subseteq\R^n$ such that~$E'\cap\Omega^c=E\cap\Omega^c$.
\medskip

An intense research activity was recently focused on $s$-minimal sets.
Among the many topics covered, such a research took into account:
\begin{itemize}
\item {\em Asymptotics:} as $s\nearrow1$, the $s$-perimeter recovers
the classical perimeter, see~\cites{MR1945278, MR1942130, MR2782803, MR2765717, MR3161386}, and as~$s\searrow0$ the problem is related to Lebesgue
measure, see~\cites{MR1940355, MR3007726}.
\item {\em Interior regularity:} $s$-minimal sets
have $C^\infty$ boundary when~$n=2$, and also when~$n\le7$ provided that~$s\in(1-\e_0,1)$
for a sufficiently small~$\e_0\in(0,1)$, see~\cites{MR3090533, MR3107529, MR3331523}.
One can also consider regularity problems for stable, rather than minimal, objects, see~\cites{BV, CCC}.
\item {\em Bernstein property:} $s$-minimal sets with
a graphical structure are necessarily half-spaces
in dimension~$n\le3$, and also in dimension~$n\le8$
provided that~$s\in(1-\e_0,1)$
for a sufficiently small~$\e_0\in(0,1)$, see~\cites{MR3680376, FAR}.
More generally, one can prove similar results
assuming only that
some partial derivatives of the graph are bounded from either above or below,
see~\cite{FMOS}.
\item {\em Isoperimetric problems:} minimizing the $s$-perimeter under suitable volume
conditions naturally leads to a number of fractional isoperimetric problems, see~\cites{MR2469027, MR2799577, MR3322379, MR3412379}.
\item {\em Growth at infinity:} if an $s$-minimal set is the graph of a function~$u$,
then the gradient of~$u$ is bounded in the interior of a ball by a power of its oscillation,
see~\cite{COZCAB}. 
{F}rom this and~\cite{MR3331523}, one obtains
also the~$C^\infty$-regularity of~$u$.
\item {\em Connection to the fractional Allen-Cahn equation:} minimizers
of long range phase
transition models and of fractional Allen-Cahn energy functionals
approach, at a large scale, nonlocal minimal surfaces~\cites{MR2948285, MR3133422, MR3900821}.
This fact makes geometric techniques available in the study
of the symmetry properties of the solutions of the fractional Allen-Cahn
equation and for nonlinear boundary reaction equations,
in the spirit of a classical conjecture by Ennio De Giorgi, see~\cites{MR2177165, MR2498561, MR2644786, MR3148114, MR3280032, MR3610941, DEGIO, MR3812860, SAV2, FSERRA, GUI}.
\item {\em Connection with spin models in statistical mechanics:} 
ground states for long-range Ising models and nonlocal minimal surfaces
approximate each other in a suitable $\Gamma$-convergence setting, see~\cite{MR3652519}.
\item {\em Free boundary problems:} Several new free boundary problems have
been studied by taking into account the nonlocal perimeter as interfacial energy, see~\cites{MR3390089, MR3427047, MR3678490, MR3712006}.
\item {\em Surfaces of constant nonlocal mean curvature:} minimizers
of the fractional perimeter satisfy a suitable Euler-Lagrange equation, which can be written in the form
$$ \int_{\R^n} \frac{\chi_{E^c}(y)-\chi_E(y)}{|x-y|^{n+s}}\,dy=0,\qquad x\in(\partial E)\cap\Omega.$$
In analogy with the classical case, the left hand side of this equation can be considered
as a nonlocal mean curvature (or simply a nonlocal curvature when~$n=2$).
It is natural to consider curves, surfaces and hypersurfaces
with vanishing, or prescribed, nonlocal mean curvature, see e.g.~\cite{MR3485130, MR3770173, MR3744919, MR3836150, MR3881478} for
a number of results in this direction.
\item {\em Geometric flows:} In analogy with the classical case, one can also consider
geometric evolution equations of nonlocal type, such as the evolution of a hypersurface
with normal velocity equal to the nonlocal mean curvature,
see e.g.~\cites{MR2487027, MR2564467, MR3401008, MR3713894, MR3778164, SAEZ}.
This type of geometric motions also appears as a limit of discrete
heat flows and can be seen as a toy model for the evolution
of cellular automata, with potential applications in population dynamics. 
\item {\em Nonlocal capillarity problems:} Nonlocal interactions as the ones in~\eqref{FeG}
have been also exploited to model capillarity phenomena
in which the shape of the droplets are influenced by long range
interactions, see~\cites{MR3717439, MR3707346}:
in particular, in this context, one can describe the contact
angle between the droplet and the container in terms
of a suitable nonlocal Young's Law.
\end{itemize}
Furthermore, nonlocal perimeters and related fractional operators
have been studied also from the numerical point of view,
also due to their flexibility in image reconstruction theory, see e.g.~\cites{MR3413590, MR3893441, NOCHETTO}.

Hence, in general, the
nonlocal perimeter functional provides
a burgeoning topic of research which is
experiencing an intense
activity in many directions, involving mathematicians with different backgrounds
and combining different methodologies coming from geometric analysis, partial differential equations,
geometric measure theory, calculus of variations and functional analysis. 

Since now,
all the problems covered in this framework, the methods exploited
and the results obtained have shown {\em
significant differences} with respect to
their classical analogues, and the new features provided by the nonlocal aspect of
the problem turned out to play a very major role.

We also refer to~\cites{MR3588123, MR3824212} for recent surveys on nonlocal minimal
surfaces and related topics.\medskip

Going back to the minimization problem in~\eqref{MINIMIZZ},
for unbounded domains~$\Omega$, the $s$-perimeter of many ``interesting sets''
can become unbounded. Nevertheless, one can make sense of
the minimization procedure by saying that~$E$ is
locally~$s$-minimal in a (possibly unbounded) domain~$\Omega$
if~$E$ is $s$-minimal in every bounded and Lipschitz domain~$\Omega'\Subset\Omega$
(see Section~1.3 in~\cite{MR3827804} for additional details
on these minimality notions). 

In particular, in this way, one
can take into account the local minimization problem on cylindrical domains
of the form~$B\times\R$, where~$B\subset\R^{n-1}$
is a bounded set with smooth boundary. This setting naturally comprises
the one of ``graphs'', i.e. sets which happen to be the subgraph of a certain function.
The graphical setting was studied in detail in~\cite{MR3516886},
establishing that a locally~$s$-minimal
set which is graphical outside a cylinder is
necessarily graphical over the entire space.

In this setting, the locally $s$-minimal set
is described by a uniformly continuous graph inside the cylinder
which can exhibit boundary discontinuities of jump type, that is
the boundary datum is not necessarily attained continuously
-- even for smooth and compactly supported data, as shown by an example in~\cite{MR3596708}.

In jargon, $s$-minimal sets with graphical properties are called $s$-minimal graphs,
and the boundary discontinuity phenomenon is known with the name of ``stickiness''
(meaning that the interior $s$-minimal surface has to stick at the walls of the cylinder
to attain its exterior datum). 
\medskip

Of course, this stickiness phenomenon
is a {\em purely nonlocal feature}, since classical minimal surfaces leave convex domain
in a transversal fashion, and it seems to be a {\em very distinctive phenomenon
for nonlocal minimal surfaces that is not shared by other problems
of fractional type} (e.g., solutions of fractional Laplace equations
do not exhibit jumps at the boundary).
\medskip

The main goal of this article is to show that this stickiness phenomenon
and the corresponding boundary discontinuity for nonlocal minimal
graphs in the plane, as introduced in~\cites{MR3516886, MR3596708},
is indeed a ``generic'' feature. More precisely, we show that
either a nonlocal minimal
graph in the plane is boundary discontinuous, or there is an arbitrarily small
perturbation of its exterior data which produces a boundary discontinuous
nonlocal minimal graph. In this sense, boundary continuity is an ``unstable''
property of nonlocal minimal graphs, since it is destroyed by arbitrarily small perturbations,
and the stickiness phenomenon holds true ``essentially'' for all prescribed exterior data.
Our precise result is the following:

\begin{theorem}[Genericity of the stickiness phenomenon]\label{GENER}
Let~$\alphxx\in(s,1)$ and~$\Omega:=(0,1)\times\R$.
Let~$v\in C^{1,\alphxx}(\R)$.
Let~$\varphi\in C^{1,\alphxx}(\R,[0,+\infty))$
be not identically zero,
with~$\varphi=0$ in~$(-d,1+d)$, for some~$d>0$.

Let~$u:\R\times[0,+\infty)\to\R$ be defined, for all~$t\ge0$, by
$$ u(x_1,t):=v(x_1)+t\varphi(x_1)\qquad{\mbox{
if~$x_1\in\R\setminus(0,1)$,}}$$
and, for~$x_1\in(0,1)$, by requiring that
the subgraph
\begin{equation}\label{etga}
E_t:=\{ x=(x_1,x_2)\in\R^2 {\mbox{ s.t. }} x_2<u(x_1,t)\}\end{equation}
is locally $s$-minimal in~$\Omega$.

Assume that
\begin{equation}\label{TB0}
\lim_{{x_1\searrow0}} u(x_1,0)=v(0).\end{equation}
Then, for any~$t>0$,
\begin{equation}\label{TB}
\limsup_{{x_1\searrow0}} u(x_1,t)>v(0).\end{equation}
\end{theorem}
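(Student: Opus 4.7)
The plan is to argue by contradiction, assuming $\limsup_{x_1\searrow 0} u(x_1,t)\leq v(0)$ and deriving a sign obstruction by comparing the boundary Euler-Lagrange identities for $E_0$ and $E_t$ at the common point $P:=(0,v(0))$. The first step is monotonicity: since $\varphi\geq 0$, the exterior datum of $E_t$ dominates that of $E_0$ pointwise, so the standard comparison principle for locally $s$-minimal sets gives $E_0\subseteq E_t$, hence $u(\cdot,t)\geq u(\cdot,0)$ on all of $\mathbb{R}$. Together with~\eqref{TB0} and the contradictory hypothesis, this forces $\lim_{x_1\searrow 0} u(x_1,t)=v(0)$, so that both $u(\cdot,0)$ and $u(\cdot,t)$ are continuous at $0$ from the right with common value $v(0)$.

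Next I invoke the discontinuity/$C^{1,\gamma}$ dichotomy announced in the introduction. Applied to each of $E_0$ and $E_t$, the continuity established above rules out the discontinuous alternative, and so both $u(\cdot,0)$ and $u(\cdot,t)$ belong to $C^{1,\gamma}([0,\delta])$ for some $\delta,\gamma>0$. The companion consequence of the same boundary analysis is that the nonlocal curvature equation is satisfied up to the wall $\{x_1=0\}$, so
\[
H_s(E_0)(P) \;=\; 0 \;=\; H_s(E_t)(P),
\qquad H_s(E)(x):=\mathrm{PV}\!\!\int_{\mathbb{R}^2}\frac{\chi_{E^c}(y)-\chi_E(y)}{|x-y|^{2+s}}\,dy.
\]

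Subtracting the two identities is the crux. Because $E_t\supseteq E_0$, the pointwise difference $(\chi_{E_t^c}-\chi_{E_t})-(\chi_{E_0^c}-\chi_{E_0})$ equals $-2\chi_{E_t\setminus E_0}$, which has a definite sign, so no principal-value cancellation is needed and monotone convergence yields
\[
0\;=\;H_s(E_t)(P)-H_s(E_0)(P)\;=\;-2\int_{E_t\setminus E_0}\frac{dy}{|P-y|^{2+s}}.
\]
However, $E_t\setminus E_0$ contains $\{(x_1,x_2):x_1\in\mathbb{R}\setminus(-d,1+d),\ v(x_1)\leq x_2<v(x_1)+t\varphi(x_1)\}$, which has positive Lebesgue measure (since $t>0$ and $\varphi\not\equiv 0$) and lies at distance $\geq d$ from $P$. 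Hence the right-hand side is strictly positive, and the displayed identity is impossible.

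The main obstacle lies in the two black-box ingredients of the middle step: the discontinuity/$C^{1,\gamma}$ dichotomy, and the validity of the nonlocal curvature equation at boundary points of the smooth alternative. This is exactly where the sliding method of the paper enters in an essential way: by sliding translated barrier profiles against a candidate graph and performing a delicate boundary regularity analysis, one simultaneously excludes any intermediate boundary regularity and justifies passing the Euler-Lagrange equation to points on the wall. Once these facts are in hand, the genericity statement reduces to the elementary sign computation in the third paragraph above.
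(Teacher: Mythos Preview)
Your proposal is correct and follows essentially the same route as the paper's own proof: establish monotonicity $u(\cdot,t)\ge u(\cdot,0)$, use the contradictory hypothesis together with~\eqref{TB0} to force continuity of $u(\cdot,t)$ at $x_1=0$, invoke Theorem~\ref{ELBOU} to get the Euler--Lagrange equation at the boundary point for both $E_0$ and $E_t$, and subtract to obtain a sign contradiction from $E_t\supsetneq E_0$.

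Two small remarks. First, the paper writes the boundary Euler--Lagrange identity in its one-dimensional form via the odd increasing function $F$ of~\eqref{DEF:F}, and compares $F\big((u(w,t)-v(0))/|w|\big)$ with $F\big((u(w,0)-v(0))/|w|\big)$; your two-dimensional subtraction $H_s(E_t)(P)-H_s(E_0)(P)=-2\int_{E_t\setminus E_0}|P-y|^{-2-s}\,dy$ is the same computation in different coordinates. For this difference to be an ordinary (not principal-value) integral one needs that $u(\cdot,0)$ and $u(\cdot,t)$ share the same one-sided derivative $v'(0)$ at $x_1=0$, which is exactly what Theorem~\ref{BR} provides (the $C^{1,\gamma}$ regularity is across the wall, matching the exterior datum), so the singularity of $|P-y|^{-2-s}$ is integrable over $E_t\setminus E_0$ since $\gamma>s$. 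Second, there is a harmless sign slip in your last paragraph: the quantity $-2\int_{E_t\setminus E_0}|P-y|^{-2-s}\,dy$ is strictly \emph{negative}, not positive; the contradiction with $0$ is of course unaffected.

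Finally, the paper does not treat the monotonicity step as a black-box ``standard comparison principle'' but proves it by sliding $E_t+\tau e_2$ downward and applying Theorem~\ref{ELBOU} at the first contact point (which may lie on $\partial\Omega$); so in fact the same boundary Euler--Lagrange ingredient is already at work there.
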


Concerning the definition of $u(x_1,t)$ in Theorem~\ref{GENER},
we recall that any $s$-minimal set given by exterior data with a graphical structure
has also a graphical structure, thanks to~\cite{MR3516886},
therefore the set~$E_t$ in~\eqref{etga} is indeed a subgraph.\medskip

The proof of Theorem~\ref{GENER} relies on an auxiliary boundary regularity
result, that we now state. This result seems to be very interesting in itself, since
it rules out ``intermediate'' pathologies in the regularity theory.
Namely: 
\begin{itemize}
\item On the one hand,
nonlocal minimal graphs {\em can well be discontinuous
at the boundary} (as the example in~\cite{MR3596708}).
\item On the other hand, we prove that
{\em if nonlocal minimal graphs happen to be continuous
at the boundary, then they are necessarily differentiable},
and the derivative is H\"older continuous.\end{itemize}
In addition, the H\"older exponent can be explicitly determined,
and it will be {\em sufficiently large} for concrete applications.

In particular, we establish that {\em no halfway boundary regularity}
is possible for nonlocal minimal graphs: they cannot be merely continuous,
or H\"older, or Lipschitz, since the absence of boundary jumps is sufficient
to differentiability and H\"older regularity of the derivative.
\medskip

The precise statement that we prove is the following:

\begin{theorem}[Continuity implies differentiability]\label{BR}
Let~$\betxxalf\in(s,1)$, $u:\R\to\R$, with~$u\in C^{1,\betxxalf}([-h,0])$
for some~$h\in(0,1)$, and
$$ E:=\{ (x_1,x_2)\in\R^2 {\mbox{ s.t. }} x_2<u(x_1)\}.$$
Assume that~$E$ is locally $s$-minimal in~$(0,1)\times\R$.
Suppose also that
\begin{equation}\label{GIAJd}\lim_{x_1\searrow0}u(x_1)=
\lim_{x_1\nearrow0}u(x_1).\end{equation}
Then, $u\in C^{1,\gamma}([-h,1/2])$, with
$$\gamma:=\min\left\{\betxxalf,\,\frac{1+s}2\right\}.$$
\end{theorem}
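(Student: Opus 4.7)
The plan is to establish a boundary dichotomy at $x_1 = 0^+$ --- either $u$ is $C^{1,\gamma}$ up to $0^+$, or $u$ has a jump discontinuity there --- and to invoke the continuity hypothesis~\eqref{GIAJd} to rule out the jump case. Since interior $C^\infty$ regularity on $(0, 1/2)$ follows from~\cite{MR3331523} combined with~\cite{COZCAB}, the only thing to prove is a one-sided boundary estimate at $0^+$. After a vertical translation making the common limit in~\eqref{GIAJd} equal to $0$, and with $\mu := u'(0^-)$, the goal is the pointwise bound
\[
|u(x_1) - \mu x_1| \leq C\, x_1^{1+\gamma} \qquad \text{for } x_1 \in [0,\, 1/2],
\]
with $\gamma := \min\{\alphxx, (1+s)/2\}$.

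The main technical engine is an improvement-of-flatness iteration at the boundary. Suppose that in $B_r$ around $(0,0)$ the set $E$ lies within vertical distance $\e$ of the half-plane $H_\mu := \{x_2 < \mu x_1\}$, and that the exterior data on $[-h, 0]$ matches $H_\mu$ up to an error of order $r^{1+\alphxx}$. Rescaling to unit scale, $E$ becomes an almost $s$-minimal set close to a half-plane, with smooth graphical exterior data converging to the same half-plane. A compactness/linearisation argument then yields that, at scale $\rho r$ for a fixed $\rho \in (0,1)$, $E$ is within vertical distance $\rho^{1+\gamma}\,\e$ of an adjusted half-plane $H_{\mu'}$ with $|\mu' - \mu| \leq C\e/r$. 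The exponent $\alphxx$ in $\gamma$ is inherited from the exterior data, while $(1+s)/2$ arises as the intrinsic saturation from the decay of the nonlocal kernel $|x - y|^{-2-s}$, bounding the regularity obtainable from the tail contribution to the fractional mean curvature at the boundary.

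To activate this iteration and obtain the dichotomy, one argues by contradiction. If the bound $|u(x_1) - \mu x_1| \leq C x_1^{1+\gamma}$ fails along some sequence of scales $r_k \searrow 0$, a blow-up procedure along $r_k$ extracts a limit $s$-minimal graph over the half-line $(0, +\infty)$ with flat graphical exterior data on $(-\infty, 0)$, renormalised so as to be nontrivial at unit scale. The classification of such limits --- using planar interior regularity from~\cite{MR3090533}, Bernstein-type rigidity from~\cite{MR3680376}, and a sliding/comparison argument with affine barriers --- forces the limit to be a half-plane, so the only way nontriviality can persist through the blow-up is a vertical jump at $\{y_1 = 0\}$. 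Rescaling back produces a boundary discontinuity in $u$, contradicting~\eqref{GIAJd}. This is precisely the ``discontinuity/smoothness alternative'' announced in the introduction.

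The principal obstacle is the improvement-of-flatness step: one must construct the smooth sub- and super-barriers from the $C^{1,\alphxx}$ exterior data on $[-h,0]$, carefully control the nonlocal tail contributions at scales comparable to the boundary distance, and exploit the graphical structure ensured by~\cite{MR3516886}. Once the pointwise boundary $C^{1,\gamma}$ estimate at $0^+$ is in hand, combining it with interior $C^\infty$ regularity on $(0, 1/2)$ and the $C^{1,\alphxx}$ assumption on $[-h, 0]$ yields $u \in C^{1,\gamma}([-h, 1/2])$.
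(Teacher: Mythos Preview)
Your sketch correctly identifies the dichotomy (continuity versus stickiness) and the role of a boundary improvement of flatness, but it misses the central mechanism by which the exponent~$(1+s)/2$ is actually reached, and as written the argument cannot produce it.

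The compactness/linearisation step in a boundary improvement of flatness yields a limit~$\bar u$ solving~$(-\Delta)^{\sigma}\bar u=0$ in~$(0,\infty)$ with~$\bar u=0$ on~$(-\infty,0)$, where~$\sigma=(1+s)/2$. Such functions have boundary asymptotics~$\bar u(x_1)=\bar a\,x_1^{\sigma}+O(x_1^{1+\sigma})$ (Lemma~\ref{CLAUDIA}). The leading term~$x_1^{\sigma}$ is sublinear and cannot be absorbed by tilting the reference half-plane, so unless~$\bar a=0$ the oscillation decays only like~$\rho^{\sigma}$, which gives at best~$C^{(1+s)/2}$---not~$C^{1,(1+s)/2}$. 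The paper does force~$\bar a=0$ via the corner-like barrier of Lemma~\ref{P:AL:IUBB} combined with the stickiness alternative (this is exactly formula~\eqref{abarra0}), but the Harnack and tail estimates feeding into that step (Lemma~\ref{HARNA}, Lemma~\ref{-02390-23409-0}) require the exponent~$\alpha$ appearing in the dyadic hypotheses to satisfy~$\alpha<s$, for integrability of~$\int t^{\alpha-1-s}\,dt$. Consequently the improvement-of-flatness machinery alone delivers only Theorem~\ref{PROVV:Areg}, namely~$u\in C^{1,\underline{s}}$. Your claim that~$(1+s)/2$ is the ``intrinsic saturation from the decay of the nonlocal kernel'' inside the iteration is therefore not right: that exponent does not emerge from the flatness iteration at all.

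What is missing from your plan is the second stage of the paper's proof (Section~\ref{HAR5}): once~$C^{1,\underline{s}}$ is established, one writes the Euler--Lagrange equation for a cut-off~$v=\zeta u$ as~$(-\Delta)^{\sigma}v=f_\star$ with a H\"older right-hand side (the delicate estimate~\eqref{BSC}), subtracts a smooth extension~$\tilde v$ so that~$W:=v-\tilde v$ solves a Dirichlet problem, and invokes the boundary expansion of Lemma~\ref{CLAUDIA2} to write~$W(x_1)=a_0 x_1^{\sigma}+O(x_1^{\mu+\sigma})$. The already-established differentiability then forces~$a_0=0$ (equation~\eqref{Xcsv73-923948-duqu}), so the regularity is governed by the \emph{second} term of the expansion, and Lemma~\ref{REGONIOA} upgrades~$W$ to~$C^{\mu+\sigma}$. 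Feeding this back into~\eqref{BSC} and iterating finitely many times (the bootstrap~\eqref{56572394}--\eqref{SOtgahsu33-2}) reaches~$\gamma=\min\{\alpha,(1+s)/2\}$. This linear-theory bootstrap, and in particular the observation that differentiability kills the leading~$x_1^{\sigma}$ term so that one inherits the next-order exponent~$1+\sigma$, is the idea your proposal lacks.
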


We think that Theorem~\ref{BR} possesses some surprising features.
First of all, at a first glance,
the boundary regularity of Theorem~\ref{BR} seems ``too good
to be true'' when compared with the case
of nonlocal linear equations, in which solutions are in general not better
than H\"older continuous at the boundary. In this sense,
the regularity obtained in Theorem~\ref{BR} arises from the combination
of two distinctive properties, namely an improvement of flatness method,
which is specific for nonlocal minimal surfaces, and that we perform here
at boundary points, and the higher order boundary regularity for linear
equations. The first of these two ingredients provides a differentiability
result, and only after this the second ingredient comes into play.
Namely, we will exploit the linear theory only after having determined
that the linear equation is a ``very good
approximation'' of the nonlocal curvature equation ``with respect to its own
tangent direction''. In this way, having already established a differentiability result
by the improvement of flatness method, the linear theory necessarily forces
the first term in the boundary expansion to vanish, and hence
the second term of the expansion becomes
representative of the boundary regularity
(this justifies the exponent~$1+\frac{1+s}{2}$ in Theorem~\ref{BR},
since the linearized equation in this case is related to the fractional Laplacian
of order~$\frac{1+s}{2}$, and the exponent~$1+\frac{1+s}{2}$
is precisely the one arising from the second term in the boundary expansion).\medskip

Another very intriguing feature given by
Theorem~\ref{BR} is that
the switch 
between ``non-sticky'' and ``sticky'' nonlocal minimal graphs
is continuous in~$L^\infty$ but not in~$C^1$. That is,
if one considers a nonlocal minimal graph which is continuous
up to the boundary, then it is in fact $C^{1,\frac{1+s}2}$-regular
by Theorem~\ref{BR}, and, according to Theorem~\ref{GENER}, a small perturbation of the exterior data
makes this $C^{1,\frac{1+s}2}$-graph switch to a discontinuous graph. If the perturbation is small,
the two graphs are close to each other, nevertheless their boundary derivative is very different,
since in the unperturbed case the graph detaches in a $C^{1,\frac{1+s}2}$-way
from any prescribed tangent direction, while in the 
perturbed case the graph detaches in a $C^{1,\frac{1+s}2}$-way
from the vertical direction (in particular, any small perturbation of
the exterior data makes the boundary derivative
pass suddenly from a given, possibly zero, value to infinity).\medskip

\begin{figure}[h]
\centering
\includegraphics[width=7.5 cm]{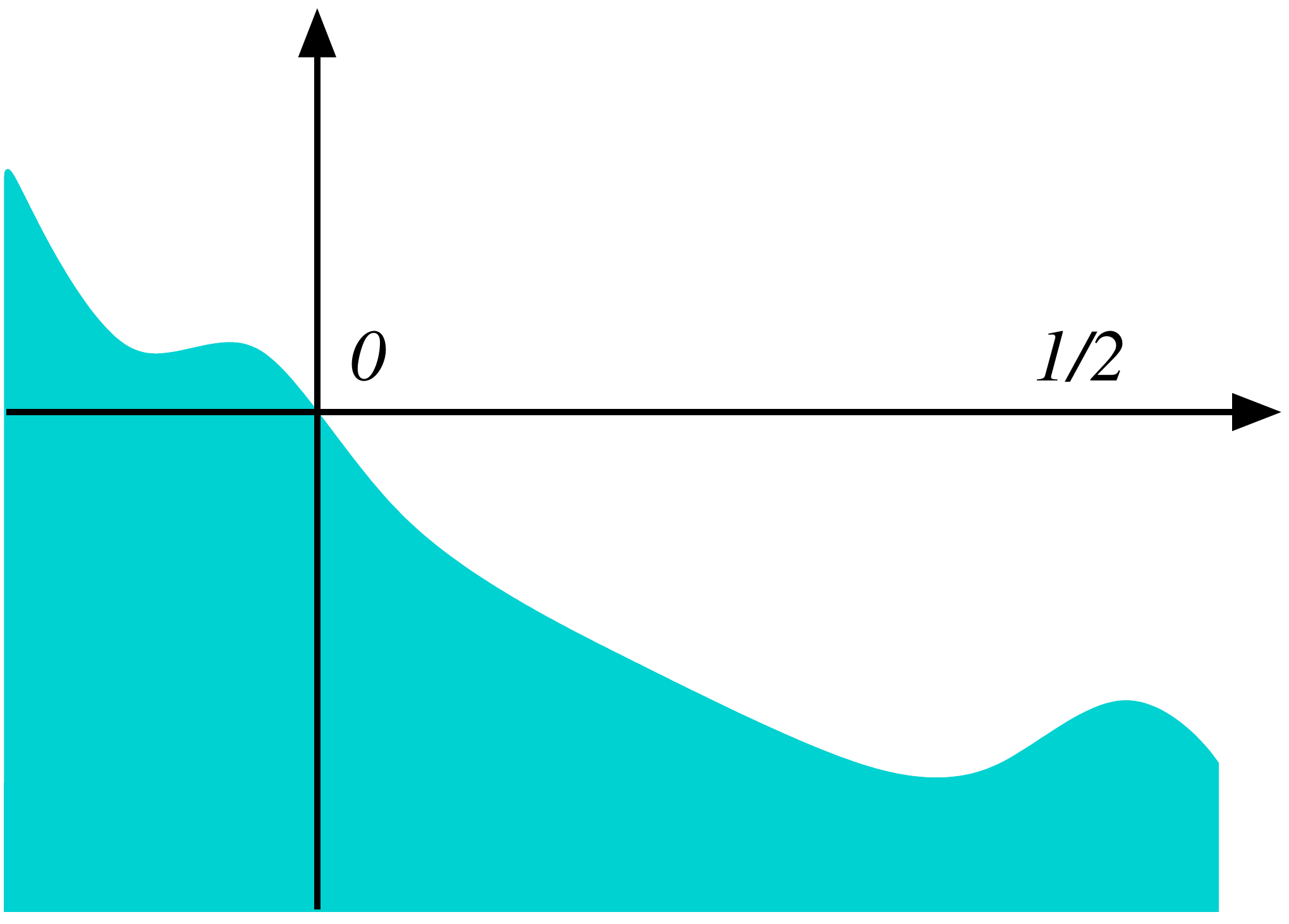}$\qquad$
\includegraphics[width=7.5 cm]{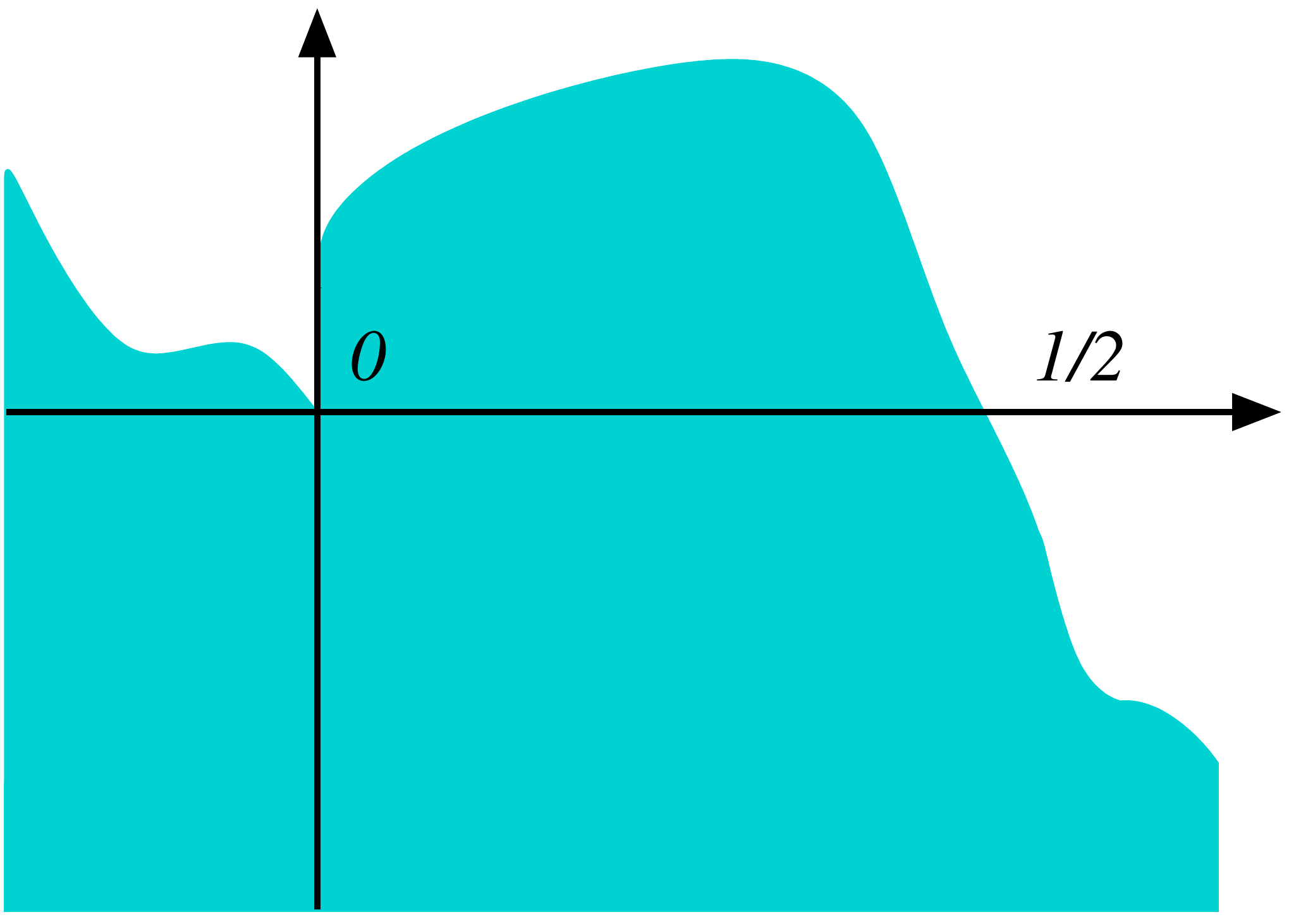}
\caption{\footnotesize\it The alternative boundary behavior in Corollary~\ref{CURV}:
(i) on the left, differentiability up to the boundary, (ii) on the right, stickiness and  boundary
discontinuity. No intermediate option.}
\label{FIGSCAL-ALTER}
\end{figure}

More specifically, an interesting consequence of Theorem~\ref{BR} and of the results in~\cite{MR3532394}
is that nonlocal minimal graphs in the slab are $C^{1,\frac{1+s}2}$-curves
from the interior
up to the boundary, and the following alternative holds: either they are
global $C^{1,\frac{1+s}2}$-graphs, or they exhibit boundary discontinuities
and the graphicality direction becomes horizontal near the boundary
(i.e., in this case, it is the inverse of the solution~$u$, denoted
by~$u^{-1}$, that is significant for the boundary regularity
of the curve describing the nonlocal minimal graph in the slab). 
This alternative is described in Figure~\ref{FIGSCAL-ALTER}.
The precise result is the following:

\begin{corollary}[Geometric regularity up to the boundary]\label{CURV}
Let~$u:\R\to\R$, with~$u\in C^{1,\frac{1+s}2}([-h,0])$
for some~$h\in(0,1)$, and
$$ E:=\{ (x_1,x_2)\in\R^2 {\mbox{ s.t. }} x_2<u(x_1)\}.$$
Assume that~$E$ is locally $s$-minimal in~$(0,1)\times\R$.

Then, $\overline{(\partial E)\cap ((0,1)\times\R)}$
is a~$C^{1,\frac{1+s}2}$-curve.

Moreover, the following alternative holds:
\begin{itemize}
\item[(i)] either
$$ \lim_{x_1\searrow0}u(x_1)=\lim_{x_1\nearrow0}u(x_1)$$
and~$u\in C^{1,\frac{1+s}2}\left(\left[-h,\frac12\right]\right)$,
\item[(ii)] or
$$ \ell:=\lim_{x_1\searrow0}u(x_1)\ne\lim_{x_1\nearrow0}u(x_1)$$
and there exists~$\mu>0$ such that~$u^{-1}\in
C^{1,\frac{1+s}2}([\ell-\mu,\ell+\mu])$.
\end{itemize}
\end{corollary}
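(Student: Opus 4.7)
The plan is to reduce Corollary~\ref{CURV} to Theorem~\ref{BR}, using the interior regularity for planar $s$-minimal sets to handle the curve away from the slab boundary, and the geometric analysis of~\cite{MR3532394} to handle the case when stickiness does occur. I would split the argument according to the trichotomy already present in the statement: interior behavior, boundary case (i), and boundary case (ii).

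\emph{Setup.} First I would invoke the $C^\infty$ interior regularity of planar nonlocal minimal sets (e.g.~\cite{MR3090533,MR3107529}) to conclude that $\partial E\cap((0,1)\times\R)$ is a smooth curve in the open slab. In particular its $C^{1,\frac{1+s}{2}}$ regularity is automatic away from the two boundary lines $\{x_1=0\}$ and $\{x_1=1\}$, so the whole content of the corollary concerns how the curve reaches these two vertical lines; by symmetry I focus on $x_1=0$. Since $s\in(0,1)$, the hypothesis $\betxxalf=\frac{1+s}{2}$ satisfies $\betxxalf\in(s,1)$, so Theorem~\ref{BR} is applicable with this choice.

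\emph{Case (i): continuity at the boundary.} This is the straightforward branch. The assumption in (i) is exactly the continuity hypothesis~\eqref{GIAJd} of Theorem~\ref{BR}. Applying that theorem with $\alphxx=\frac{1+s}{2}$ gives immediately $u\in C^{1,\frac{1+s}{2}}([-h,1/2])$, which combined with interior smoothness produces a global $C^{1,\frac{1+s}{2}}$ parametrization of $\overline{(\partial E)\cap((0,1)\times\R)}$.

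\emph{Case (ii): stickiness.} Now $\partial E$ contains a vertical "wall" segment on $\{0\}\times J$ with $\ell$ as one endpoint. The strategy is to reparametrize the curve near $(0,\ell)$ as $x_1=u^{-1}(x_2)$ and apply Theorem~\ref{BR} in a rotated frame. The crucial geometric input comes from~\cite{MR3532394}: at a stickiness point the curve leaves the wall \emph{tangentially}, i.e.\ with tangent vector parallel to the wall. In particular the curve is locally a graph in the horizontal direction and the inverse $u^{-1}$ has the continuity property needed to apply Theorem~\ref{BR}. I would then perform a $\pi/2$ rotation, which (by rotation invariance of the $s$-perimeter) turns the local $s$-minimality of $E$ near $(0,\ell)$ into local $s$-minimality of a new set $\widetilde E$ in a new slab, in which $\widetilde E$ is a genuine subgraph over one horizontal coordinate and the exterior datum (built out of the wall and of the smooth far-from-detachment portion of the curve) is $C^{1,\frac{1+s}{2}}$ on one side of the new boundary point. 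The clean-detachment statement from~\cite{MR3532394} is precisely what provides the continuity hypothesis~\eqref{GIAJd} for this rotated problem. Then Theorem~\ref{BR} applied in the rotated frame upgrades $u^{-1}$ to $C^{1,\frac{1+s}{2}}$ on a full interval $[\ell-\mu,\ell+\mu]$.

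\emph{Main obstacle.} The genuinely delicate step is the reduction in case (ii): one has to check carefully that, after the $\pi/2$ rotation, the resulting configuration fits the hypotheses of Theorem~\ref{BR}, namely that the rotated exterior datum (wall plus smooth arc) is $C^{1,\frac{1+s}{2}}$ on the relevant half-line and that continuity at the rotated boundary point follows from the tangency of the detachment, as provided by~\cite{MR3532394}. Once this matching is done, the two branches of Theorem~\ref{BR} produce the alternative (i)/(ii) of the corollary, and together with interior smoothness they give the first assertion that $\overline{(\partial E)\cap((0,1)\times\R)}$ is a $C^{1,\frac{1+s}{2}}$ curve.
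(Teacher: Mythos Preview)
Your treatment of case~(i) is correct and matches the paper's intended argument: Theorem~\ref{BR} applied with~$\betxxalf=\frac{1+s}{2}$ gives $u\in C^{1,\frac{1+s}{2}}([-h,1/2])$ directly, and interior regularity handles the rest of the curve.

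For case~(ii), however, the rotation argument has a genuine gap. Theorem~\ref{BR} requires the set to be a \emph{global} subgraph $\{x_2<u(x_1)\}$ that is locally $s$-minimal in a \emph{vertical} slab $(0,1)\times\R$, with exterior datum $u|_{[-h,0]}\in C^{1,\betxxalf}$. After a $\pi/2$ rotation none of these hypotheses survive: the rotated set is not a global subgraph in the new vertical direction; the original minimality domain $(0,1)\times\R$ becomes a horizontal strip rather than a vertical slab of the required form; and the ``exterior datum'' on the negative half-line in the rotated frame would have to be built from the graph of $u$ on $(0,\e)$, whose regularity is precisely what you are trying to establish. So Theorem~\ref{BR} does not apply in the rotated frame as stated, and the ``main obstacle'' you flag is not actually overcome.

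The fix is much simpler and is what the paper has in mind. In the sticky case, Theorem~1.1 of~\cite{MR3532394} already delivers the \emph{full} $C^{1,\frac{1+s}{2}}$ regularity of the detaching curve written as a horizontal graph $x_1=\phi(x_2)$ near the detachment point, together with $\phi=0$ and $\phi'=0$ there; see how the paper invokes this result in the proof of Lemma~\ref{HARNA} (around~\eqref{Thsi}) and again in the proof of Theorem~\ref{ELBOU}. You are citing~\cite{MR3532394} only for the weaker conclusion of tangential contact and then attempting to recover the H\"older exponent by an independent second step, but that reference already yields case~(ii) outright. This makes the rotation detour both unnecessary and, as it stands, unjustified.
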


Interestingly, Corollary~\ref{CURV} states that the geometric regularity
of nonlocal minimal graphs is the same as that for obstacle-type problems, see~\cite{MR3532394},
since the curve $\overline{(\partial E)\cap ((0,1)\times\R)}$
always detaches from its tangent direction in a global
$C^{1,\frac{1+s}2}$-way. This is in principle absolutely not obvious
and, to prove it, one cannot use directly the obstacle-type results
by considering the tangent line as an obstacle for the curve itself, since
one does not know that such a curve always lies at one side of its tangent line,
and there are no general results in the literature dealing with convexity
properties of nonlocal minimal graphs.\medskip

Another important consequence of Theorem~\ref{BR} is that planar
nonlocal minimal graphs satisfy the Euler-Lagrange equation
(i.e. the vanishing nonlocal curvature equation) not only at interior boundary
points, but also at all the boundary points that are accessible as limits
of interior boundary points. Interestingly, this statement holds true
independently on the stickiness phenomenon and it is thus valid
in any situation, making it a cornerstone towards the proof of
Theorem~\ref{GENER}. The precise result that we have is the following:

\begin{theorem}[Pointwise nonlocal curvature equation up to the boundary]\label{ELBOU}
Let~$\betxxalf\in(s,1)$, $u:\R\to\R$, with~$u\in C^{1,\betxxalf}([-h,0])$
for some~$h\in(0,1)$, and
$$ E:=\{ (x_1,x_2)\in\R^2 {\mbox{ s.t. }} x_2<u(x_1)\}.$$
Assume that~$E$ is locally $s$-minimal in~$(0,1)\times\R$.
Then
\begin{equation}\label{USEF} \int_{\R^2} \frac{\chi_{\R^2\setminus E}(y)-\chi_{E}(y)}{|x-y|^{2+s}}\,dy=0,\end{equation}
for every~$x=(x_1,x_2)\in\overline{(\partial E)\cap((0,1)\times\R)}$.
\end{theorem}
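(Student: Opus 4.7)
The plan is to recast~\eqref{USEF} as a limiting version of the standard interior Euler--Lagrange equation, using the dichotomy from Theorem~\ref{BR} and Corollary~\ref{CURV} to supply the needed boundary regularity. For $x = (x_1, x_2) \in \R^2$, denote by
$$ \Phi(x) := \int_{\R^2} \frac{\chi_{\R^2 \setminus E}(y) - \chi_{E}(y)}{|x-y|^{2+s}}\,dy$$
the nonlocal curvature (interpreted as a principal value where needed). Since $E$ is a subgraph, integrating $y_2$ out gives the one-dimensional representation $\Phi(x) = \int_\R G_s(x_1 - y_1,\, x_2 - u(y_1))\,dy_1$, where $G_s$ is explicit, odd in its second argument, and satisfies $|G_s(a, b)| \le C|a|^{-(1+s)}$. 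By~\cite{MR3516886}, $u$ is continuous on $(0,1)$, so $(\partial E)\cap((0,1)\times\R)$ is exactly the graph of $u|_{(0,1)}$, and $\Phi$ vanishes at every such point by the classical Euler--Lagrange equation for $s$-minimal sets from~\cite{MR2675483}. The task then reduces to verifying~\eqref{USEF} at the accessible boundary points, which lie in $\{0,1\}\times \R$.

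Focusing on $x = (0, \ell)$ with $\ell := \lim_{x_1 \searrow 0} u(x_1)$ (the symmetric case $(1, \ell')$ proceeds analogously), I would choose interior points $x^{(k)} = (x_1^{(k)}, u(x_1^{(k)}))$ with $x_1^{(k)} \searrow 0$, so that $x^{(k)} \to x$ and $\Phi(x^{(k)}) = 0$ for every $k$. It suffices to prove that $\Phi$ is continuous along this sequence, i.e., $\Phi(x^{(k)}) \to \Phi(x)$. Split each integral into a near-field piece $\{|y_1 - x_1^{(k)}| < r\}$ and a far-field piece; the far-field converges by dominated convergence thanks to the decay $|G_s(a, \cdot)| \lesssim |a|^{-(1+s)}$ of the kernel. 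The near-field is then handled by appealing to the dichotomy in Theorem~\ref{BR} and Corollary~\ref{CURV}.

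In the non-sticky case $\ell = v(0)$, Theorem~\ref{BR} delivers $u \in C^{1,\gamma}([-h, 1/2])$, so $\partial E$ is a $C^{1,\gamma}$ graph over the $x_1$-axis in a neighborhood of $x$. The standard estimate for the nonlocal curvature of $C^{1,\gamma}$ graphs (exploiting the oddness of $G_s$ in its second argument for cancellation at the singularity) yields uniform control of the near-field integral, and convergence follows. In the sticky case $\ell \ne v(0)$, the horizontal graph representation of $\partial E$ fails at $x$; instead, Corollary~\ref{CURV} provides a $C^{1,(1+s)/2}$ parametrization of $\partial E$ near $x$ by the vertical coordinate (via $u^{-1}$, which also absorbs the vertical stickiness segment), with vertical tangent at $x$. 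Rewriting $\Phi$ in the swapped coordinates---now with $x_2$ playing the role of the independent variable and $u^{-1}$ replacing $u$---reduces the situation to an entirely analogous $C^{1,\gamma}$ graph computation in the orthogonal direction, and the near-field analysis goes through as before.

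The principal technical obstacle is this change of coordinates in the sticky case: the graph $u$ degenerates at the boundary, so the one-dimensional reduction of $\Phi$ must be redone in terms of $u^{-1}$ before any principal value estimate can be applied; a priori the integrand $G_s(x_1^{(k)}-y_1, u(x_1^{(k)}) - u(y_1))$ is wildly singular as $x_1^{(k)}\to 0$, and only after switching parametrization do the cancellation and Hölder estimates become uniform in $k$. Once the appropriate parametrization (either $u$ or $u^{-1}$) is in hand, the oddness of $G_s$ and the $C^{1,\gamma}$ Hölder control supply the required uniform bounds, and letting $k \to \infty$ in $\Phi(x^{(k)}) = 0$ yields $\Phi(x) = 0$, as claimed.
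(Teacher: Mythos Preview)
Your proposal is correct and follows essentially the same strategy as the paper: handle interior points via the known smoothness and Euler--Lagrange equation, then at accessible boundary points split into the continuous and discontinuous cases, invoking Theorem~\ref{BR} in the former and the obstacle-type regularity (Corollary~\ref{CURV}, ultimately from~\cite{MR3532394}) in the latter, and pass to the limit by dominated convergence using that the resulting H\"older exponent exceeds~$s$. The paper's write-up is terser---in the sticky case it simply cites~\cite{MR3532394} and Theorem~B.9 in~\cite{CLAUDIALUCA} rather than spelling out the coordinate-swap to the $u^{-1}$ parametrization---but the underlying argument is the same.
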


The strategy to prove Theorem~\ref{GENER} is to prove first Theorem~\ref{BR},
which will in turn lead to Theorem~\ref{ELBOU}. Then, the proof of
Theorem~\ref{GENER} will exploit Theorem~\ref{ELBOU} and a sliding method
based on maximum principle.

The proof of Theorem~\ref{BR} presents an intermediate step (namely, the forthcoming
Theorem~\ref{PROVV:Areg}) which is already a regularity statement
in which continuous solutions are proved to be differentiable.
Nevertheless, this preliminary result is not sufficient to
obtain Theorem~\ref{ELBOU}, and hence Theorem~\ref{GENER},
since the H\"older exponent of Theorem~\ref{PROVV:Areg} is too small
to allow the Euler-Lagrange equation to pass to the limit.
For this, it will be important to enhance the H\"older exponent,
and, as a matter of fact, we conjecture that the H\"older exponent
obtained in Theorem~\ref{BR} is optimal.\medskip

For boundary properties of nonlocal minimal graphs in~$\R^3$,
we refer to~\cite{CRdsafgCR}.\medskip

The rest of this paper is organized as follows.
Section~\ref{SE:1} is devoted to a first and second blow-up analysis at the boundary.
Differently than the approach in the interior, in our case the monotonicity
formula is not available, hence some specific arguments are needed
to replace the classical blow-up methods in our framework,
and this is the reason for which we provided full details of the proofs (deferred to the appendix,
not to interrupt the main line of reasoning).

In Section~\ref{SE:2}, we consider some sliding methods to ``clean''
boundary points for nonlocal minimal graphs which possess trivial blow-up limits.
This will be a pivotal step towards the alternative, provided in
Section~\ref{SALE}, according to which
the stickiness phenomenon and the regularity of the blow-up limit
are the only (mutually excluding) possible alternatives.

Sections~\ref{HAR}, \ref{HAR2}, \ref{HAR3}, \ref{HAR4} and~\ref{HAR5}
address the proof of
Theorem~\ref{BR}. In a sense, the scheme of this proof is classical,
since it relies on a Harnack Inequality (Section~\ref{HAR}),
which will allow us to classify the limits of the vertical rescalings of the
solution (Section~\ref{HAR2}). Nevertheless, differently from the existing
literature, our arguments need to take into account the boundary effects,
which, given the stickiness phenomenon, appear to always be non-negligible 
for what regularity concerns. In addition, in our setting,
one has to introduce a new barrier (Section~\ref{HAR3})
that is capable to detect -- and, in fact, exclude -- merely Lipschitz
singularities at the boundary. This turns out to be a crucial step
in our analysis, leading to a boundary regularity
alternative in which corners directly lead to discontinuities, while, viceversa,
continuity directly leads to differentiability.

The differentiability result is then obtained
via a boundary improvement of flatness, which is specifically
designed for non-sticky points (Section~\ref{HAR4}), and which
will lead us to an enhanced H\"older exponent for the derivative of the solution
and to the completion of Theorem~\ref{BR} (Section~\ref{HAR5}).

Theorems~\ref{ELBOU} and~\ref{GENER} are then proved in Sections~\ref{SDRA6}
and~\ref{SDRA7}, respectively.

The paper ends with some technical appendices which collect some ancillary results
and proofs that are postponed not to break the flow of ideas. More specifically,
Appendix~\ref{TECH} contains the technical proofs
of the statements given in Section~\ref{SE:1} concerning the boundary
blow-up limits, Appendix~\ref{LIN} collects
some auxiliary results
from the linear theory of fractional equations, Appendix~\ref{T5PAO}
recalls some density estimates (extending the interior ones
up to the boundary) and the related uniform
convergence results,
and Appendix~\ref{TED} contains
the proof of a technical statement needed
for improving the H\"older exponent of our main regularity results.

\section{Boundary blow-up analysis}\label{SE:1}

In this section we discuss the blow-up methods for nonlocal minimal graphs.
For concreteness, we stick here to the two-dimensional case,
but the $n$-dimensional analysis in this section would remain completely
unaltered. The proofs of this section are rather technical,
and therefore they have been all {\em deferred to Appendix~\ref{TECH}}.

We consider~$\Omega:=(0,1)\times\R$
and a subgraph~$E$ which is locally $s$-minimal in~$\Omega$.

We assume that the boundary of~$E$
meets the boundary of~$\Omega$ at the origin in a $C^{1,\alpha}$
fashion from outside~$\Omega$,
namely there exists~$r_0>0$ such that~$E\cap \{x_1\in(-r_0,0)\}$
is the subgraph of a $C^{1,\alpha}$ function~$v$, namely
$$ E\cap \{x_1\in(-r_0,0)\}=\{ x=(x_1,x_2)\in\R^2 {\mbox{ s.t. }} x_1\in(-r_0,0)
{\mbox{ and }}x_2<v(x_1)\},$$
with~$v(0)=0$.

We consider a first blow-up sequence, defined, for all~$k\in\N$ with~$k\ge1$,
\begin{equation}\label{Ekdef} E_k:= kE=\{ kx,\;\,x\in E\}.\end{equation}
Differently than the previous literature, the blow-up
sequence in~\eqref{Ekdef}
is centered at boundary points rather than in the interior
(this makes some classical tools such as monotonicity formula
and density estimates not available in this context).
We have the following first blow-up result.

\begin{lemma}\label{primo blow up}
There exists~$E_0\subset\R^2$ such that,
up to a subsequence, we have that
\begin{equation}\label{BL1}
{\mbox{$\chi_{E_k}\to \chi_{E_0}$ in~$L^1_{\rm loc}(\R^2)$.}}\end{equation}
In addition, 
\begin{equation}\label{BL2}
{\mbox{$E_0$ is locally $s$-minimal in~$\{x_1>0\}$}}\end{equation} and
\begin{equation}\label{BL3}
E_0\cap\{x_1<0\}= \left\{ x_2< v'(0)\,x_1\right\}\cap\{x_1<0\}.
\end{equation}
\end{lemma}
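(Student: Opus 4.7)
The plan follows a standard blow-up strategy, with extra care required because the base point lies on $\partial\Omega$, so interior tools such as the monotonicity formula and the classical density estimates are not directly at our disposal. I would proceed in three steps: compactness via a uniform $s$-perimeter bound on the rescaled sets, identification of the limit on $\{x_1<0\}$ from the $C^{1,\alpha}$ expansion of $v$, and passage to the limit in the minimality using lower semicontinuity of the $s$-perimeter.

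For \eqref{BL1}, the rescaling identity
$$ {\rm Per}_s(E_k;B_R)=k^{2-s}\,{\rm Per}_s(E;B_{R/k}) $$
reduces the compactness task to a density-type bound of the form ${\rm Per}_s(E;B_\rho)\le C\rho^{2-s}$ for small $\rho$. Such a bound is obtained by comparing $E$ in $B_\rho\cap\Omega$ with a half-plane competitor whose boundary is the tangent at~$0$ to the exterior $C^{1,\alpha}$ graph, and then invoking the local $s$-minimality of $E$. The classical compactness theorem for sets of locally uniformly bounded $s$-perimeter then produces a subsequential $L^1_{\rm loc}$ limit $E_0$.

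To prove \eqref{BL3}, I would observe that for $k$ so large that $R/k<r_0$, the trace $E_k\cap\{-R<x_1<0\}$ is the subgraph of $v_k(x_1):=k\,v(x_1/k)$. Since $v(0)=0$ and $v\in C^{1,\alpha}$, Taylor's formula at the origin gives
$$ \bigl|v_k(x_1)-v'(0)\,x_1\bigr|=\bigl|k\,v(x_1/k)-v'(0)\,x_1\bigr|\le C\,k^{-\alpha}|x_1|^{1+\alpha}, $$
so $v_k\to v'(0)\,x_1$ locally uniformly on $(-\infty,0)$, and the corresponding subgraphs converge in $L^1_{\rm loc}$ to the half-plane $\{x_2<v'(0)\,x_1\}\cap\{x_1<0\}$.

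For \eqref{BL2}, the scaling invariance of the $s$-perimeter minimization shows that each $E_k$ is locally $s$-minimal in $\Omega_k:=(0,k)\times\R$. Given a bounded Lipschitz $U\Subset\{x_1>0\}$ and any competitor $F$ with $F\setminus U=E_0\setminus U$, the set $F_k:=(F\cap U)\cup(E_k\setminus U)$ is admissible for $E_k$ as soon as $U\Subset\Omega_k$, and minimality yields ${\rm Per}_s(E_k;U)\le{\rm Per}_s(F_k;U)$. The main obstacle is to pass to the limit in this inequality: while lower semicontinuity of the left-hand side under $L^1_{\rm loc}$ convergence is standard, the nonlocal interaction terms comprising ${\rm Per}_s(F_k;U)$ involve integrals over all of $\R^2$, so I need uniform decay of the tails of $E_k\triangle E_0$. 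This is controlled by exploiting the graphical structure of each $E_k$ to sandwich it between two half-planes whose slopes are bounded uniformly in $k$, yielding tail estimates that upgrade $L^1_{\rm loc}$ convergence to convergence of the mixed interaction integrals. This tail analysis is the genuinely boundary-specific ingredient, replacing monotonicity and density arguments available in the interior case, and is presumably what the authors defer to the technical appendix.
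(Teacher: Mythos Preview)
Your strategy for \eqref{BL1} and \eqref{BL3} is essentially the paper's. For the energy bound the paper uses a slightly different competitor: rather than replacing $E$ in $B_\rho\cap\Omega$ by the tangent half-plane, it simply empties it, taking $G_r:=E\setminus(B_r\cap\Omega)$. Minimality then gives
\[
\iint_{(E\cap B_r\cap\Omega)\times E^c}\frac{dx\,dy}{|x-y|^{2+s}}\le \iint_{(E\cap B_r\cap\Omega)\times G_r}\frac{dx\,dy}{|x-y|^{2+s}},
\]
and the right-hand side is bounded by $Cr^{2-s}$ using only that $\Omega\cap B_r$ and $E\cap\Omega^c\cap B_r$ have Lipschitz boundaries. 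This avoids estimating the interaction of the half-plane with the actual exterior datum. Your variant should also work, but the paper's choice is cleaner.

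For \eqref{BL2} you are working much harder than the paper. Once $B_r(p)\Subset\{x_1>0\}$ and $k$ is large, $E_k$ is $s$-minimal in $B_r(p)$, and the paper simply invokes Theorem~3.3 of \cite{MR2675483}, which says that $L^1_{\rm loc}$-limits of $s$-minimizers are $s$-minimizers. Your proposed resolution of the tail issue, namely sandwiching the graph of $u_k(x_1)=k\,u(x_1/k)$ between two half-planes with slopes bounded uniformly in~$k$, is not justified: nothing in the hypotheses gives linear growth of $u$ on $(0,1)$, and after rescaling a mere $L^\infty$ bound on $u$ becomes $|u_k|\le kM$ on $(0,k)$, which is useless. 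The tail convergence is in fact automatic without any such sandwich: for fixed $U\Subset\{x_1>0\}$, split $U^c$ into $B_M\setminus U$ and $\R^2\setminus B_M$; on the first piece $L^1_{\rm loc}$ convergence of $\chi_{E_k}$ suffices (the kernel is bounded on $U\times(B_M\setminus U)$ once a thin neighborhood of $\partial U$ is handled, which is the content of the CRS argument), while the contribution from $\R^2\setminus B_M$ is bounded by $C|U|M^{-s}$ uniformly in~$k$. So the ``boundary-specific ingredient'' you describe is not needed here; the boundary enters only in the compactness step, not in passing minimality to the limit.
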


Given the setting in Lemma~\ref{primo blow up}, and without
the availability of monotonicity formulas, it is also convenient
to consider a second blow-up sequence, defined, for all~$k\in\N$ with~$k\ge1$,
\begin{equation}\label{EZEROk} E_{0k}:= kE_0=\{ kx,\;\,x\in E_0\}.\end{equation}
The second blow-up procedure has the advantage
that the datum outside~$\{x_1>0\}$ is a cone, which
makes it possible to establish that the limit is a cone (also in~$\{x_1>0\}$),
as next result points out:

\begin{lemma}\label{secondo blow up}
There exists~$E_{00}\subset\R^2$ such that,
up to a subsequence, we have that
\begin{equation}\label{BL1-0}
{\mbox{$\chi_{E_{0k}}\to \chi_{E_{00}}$ in~$L^1_{\rm loc}(\R^2)$.}}\end{equation}
In addition, 
\begin{equation}\label{BL2-0}
{\mbox{$E_{00}$ is locally $s$-minimal in~$\{x_1>0\}$}}\end{equation} and
\begin{equation}\label{BL3-0}
E_{00}\cap\{x_1<0\}= \left\{ x_2< v'(0)\,x_1\right\}\cap\{x_1<0\},
\end{equation}
and~$E_{00}$ is a cone, namely~$tE_{00}=E_{00}$ for all~$t>0$.
\end{lemma}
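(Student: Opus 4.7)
The plan is to view Lemma~\ref{secondo blow up} as a second iteration of Lemma~\ref{primo blow up}, where the new ingredient beyond the first iteration is the cone property, which flows from the fact that the exterior datum of $E_0$ on $\{x_1\le 0\}$ is itself a cone.

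First I would verify that $E_0$ fits squarely into the hypotheses of Lemma~\ref{primo blow up}. By Lemma~\ref{primo blow up}, $E_0$ is locally $s$-minimal in $\{x_1>0\}$, and on $\{x_1<0\}$ it coincides with the subgraph of the linear function $w(x_1):=v'(0)x_1$, which lies in $C^{1,\alphxx}(\R)$ and vanishes at the origin. The compactness mechanism used to prove (BL1)--(BL2) -- extracting an $L^1_{\rm loc}$ subsequential limit of locally $s$-minimal sets and invoking that such limits inherit local $s$-minimality -- produces, after passing to a subsequence, a set $E_{00}$ satisfying (BL1-0) and (BL2-0). Because $w$ is linear, its blow-up by any factor $k$ is $w$ itself, so the datum on $\{x_1<0\}$ is preserved exactly along the sequence, giving (BL3-0) without any passage to the limit.

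The heart of the matter is the cone property. The exterior datum of $E_0$ is scale-invariant:
$$ \lambda E_0\cap\{x_1<0\} \;=\; \lambda\bigl(\{x_2<v'(0)x_1\}\cap\{x_1<0\}\bigr) \;=\; E_0\cap\{x_1<0\} \qquad\text{for every } \lambda>0,$$
and $\lambda E_0$ is locally $s$-minimal in $\lambda\{x_1>0\}=\{x_1>0\}$ by the scaling invariance of the fractional perimeter. Fixing $t>0$ and working along the subsequence $k_j\to\infty$ for which $E_{0k_j}\to E_{00}$, the $L^1_{\rm loc}$-continuity of dilation together with the identity $tE_{0k_j}=E_{0,tk_j}$ gives $E_{0,tk_j}\to tE_{00}$; by the same compactness argument, $tE_{00}$ inherits local $s$-minimality in $\{x_1>0\}$ and agrees with $E_{00}$ on $\{x_1<0\}$.

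It remains to upgrade this to the equality $tE_{00}=E_{00}$, for which I would invoke the uniqueness of locally $s$-minimal subgraphs in the half-plane with prescribed exterior data. Writing $u_{00}$ and $u_{00}^{t}$ for the functions whose subgraphs are $E_{00}$ and $tE_{00}$ respectively (both are globally graphical by the graph-preservation results descending from~\cite{MR3516886}), one has $u_{00}=u_{00}^{t}$ on $(-\infty,0]$; a vertical sliding argument, translating $u_{00}$ until it first touches $u_{00}^{t}$ from below and applying the strong maximum principle for the vanishing fractional mean curvature at the contact point, forces $u_{00}\equiv u_{00}^{t}$, hence $tE_{00}=E_{00}$. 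The main obstacle is ensuring that the sliding produces a genuine interior contact point in the unbounded half-plane: one needs $u_{00}$ and $u_{00}^{t}$ to remain uniformly close at infinity, a property that should follow from the linear growth imposed by $w$ together with the interior regularity of $s$-minimal graphs, reducing the comparison to a compact region where the strong maximum principle applies directly.
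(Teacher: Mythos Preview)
Your treatment of \eqref{BL1-0}--\eqref{BL3-0} matches the paper exactly: one reruns Lemma~\ref{primo blow up} with $E_0$ in place of $E$ and the linear function $w(x_1)=v'(0)\,x_1$ in place of $v$.

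For the cone property you take a genuinely different route, and there is a real gap. You want to deduce $tE_{00}=E_{00}$ from a uniqueness principle for locally $s$-minimal subgraphs in the half-plane $\{x_1>0\}$ with prescribed linear datum on $\{x_1\le 0\}$, implemented via vertical sliding and the strong maximum principle. The obstacle you flag is not resolved: for the sliding to produce a contact point you need $\inf_{x_1>0}\bigl(u_{00}(x_1)-u_{00}^{t}(x_1)\bigr)>-\infty$, which requires a growth bound on $u_{00}$ as $x_1\to+\infty$. Nothing established so far gives this; the linear datum on $\{x_1<0\}$ does not by itself force linear (or even controlled) growth of $u_{00}$ on $\{x_1>0\}$, and no barrier or Liouville-type statement to that effect is available at this stage. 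Without such control, uniqueness of locally $s$-minimal graphs in an unbounded domain is simply not known, and the sliding argument cannot be started. More broadly, what you are effectively trying to prove is uniqueness of the blow-up limit along all dilated subsequences, and absent a monotonicity-type quantity this is precisely the hard point.

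The paper bypasses this entirely. Since $E_0\cap\{x_1<0\}$ is a cone, the monotonicity formula of Theorem~8.1 in~\cite{MR2675483} (whose proof, via the extension, goes through when the exterior datum is dilation-invariant even though the center lies on $\partial\Omega$) applies to $E_0$ at the origin; Theorem~9.2 in~\cite{MR2675483} then forces any blow-up limit to be a cone. This is the standard mechanism and requires no growth control or uniqueness input.
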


To avoid technical complications, it is useful
to observe that the second blow-up cone can be obtained by
a direct blow-up of the original set, up to a subsequence:

\begin{lemma}\label{Sempli}
In the notation of Lemma~\ref{secondo blow up}, we have that
$$ {\mbox{$\chi_{E_{k}}\to \chi_{E_{00}}$ in~$L^1_{\rm loc}(\R^2)$,}}$$
up to a subsequence.
\end{lemma}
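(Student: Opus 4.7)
The plan is a diagonal extraction argument based on the scaling identity $E_{km}=k\,E_m$ together with the $L^1_{\rm loc}$-convergences already provided by Lemmas~\ref{primo blow up} and~\ref{secondo blow up}. By Lemma~\ref{primo blow up}, there is an increasing sequence $\{m_\ell\}_\ell\subset\N$ such that $\chi_{E_{m_\ell}}\to\chi_{E_0}$ in $L^1_{\rm loc}(\R^2)$, and by Lemma~\ref{secondo blow up} there is an increasing sequence $\{k_j\}_j\subset\N$ such that $\chi_{E_{0k_j}}=\chi_{k_j E_0}\to\chi_{E_{00}}$ in $L^1_{\rm loc}(\R^2)$. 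The goal is to assemble these into a single subsequence of $\{E_k\}_k$ converging to~$E_{00}$.

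The key algebraic observation is that for every $k,m\in\N$ one has $E_{km}=(km)\,E=k(mE)=k\,E_m$. Accordingly, writing $n_j:=k_j\,m_{\ell(j)}$ for a choice $\ell(j)$ to be fixed below, the triangle inequality gives
$$
\|\chi_{E_{n_j}}-\chi_{E_{00}}\|_{L^1(B_R)}
\le \|\chi_{k_j E_{m_{\ell(j)}}}-\chi_{k_j E_0}\|_{L^1(B_R)}+\|\chi_{k_j E_0}-\chi_{E_{00}}\|_{L^1(B_R)}.
$$
The second summand vanishes as $j\to\infty$ for every fixed $R$ by Lemma~\ref{secondo blow up}. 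For the first, the change of variable $y=x/k_j$ yields
$$
\|\chi_{k_j E_{m_{\ell(j)}}}-\chi_{k_j E_0}\|_{L^1(B_R)}
= k_j^2\,\|\chi_{E_{m_{\ell(j)}}}-\chi_{E_0}\|_{L^1(B_{R/k_j})}.
$$

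To close the argument I would fix $\ell(j)$ diagonally: using Lemma~\ref{primo blow up}, for each $j$ choose $\ell(j)>\ell(j-1)$ large enough that
$$
\|\chi_{E_{m_{\ell(j)}}}-\chi_{E_0}\|_{L^1(B_j)}<\frac{1}{j\,k_j^2}.
$$
Since $B_{R/k_j}\subseteq B_j$ whenever $R\le j$ and $k_j\ge 1$, the first summand is then bounded by $1/j$ uniformly for $R\le j$, so that $\chi_{E_{n_j}}\to\chi_{E_{00}}$ in $L^1_{\rm loc}(\R^2)$. The choice $\ell(j)>\ell(j-1)$ together with the monotonicity of $\{k_j\}$ ensures that $\{n_j\}$ is strictly increasing, hence $\{E_{n_j}\}$ is indeed a genuine subsequence of~$\{E_k\}$.

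The main (and essentially only) point to watch is the Jacobian factor $k_j^2$ produced by the dilation: it could in principle destroy the convergence, but it is harmlessly absorbed by selecting the first-blow-up index $m_{\ell(j)}$ sufficiently large as a function of~$k_j$. No additional minimality or regularity input from the earlier lemmas is needed, beyond the two $L^1_{\rm loc}$ convergences they provide; this is what makes the result a clean diagonal extraction rather than a new compactness statement.
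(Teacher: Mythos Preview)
Your proof is correct and follows essentially the same diagonal extraction as the paper: both use the scaling identity $E_{km}=k\,E_m$, the triangle inequality, and the change of variable producing the Jacobian $k_j^2$, which is then absorbed by choosing the first-blow-up index large enough depending on~$k_j$. Your treatment is in fact slightly more careful than the paper's in making the argument uniform in~$R$ (by working on~$B_j$ rather than a fixed~$B_R$), but the idea is identical.
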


\section{Sliding methods}\label{SE:2}

In this section we prove that ``narrow $s$-minimal sets
are necessarily void'' (see below for a precise statement).
The proof is based
on the sliding method.

Given~$M$, $h>0$, we let
\begin{equation}\label{Qrr} {\mathcal{Q}}_{M,h}:=(0,M)\times(-h,h).\end{equation}
We also use the short notation
$$ \tilde\chi_A:=\chi_{A^c}-\chi_A.$$

\begin{proposition}\label{pallata}
Let~$\lambda>0$.
There exist~$M_0>1$ and~$\mu_0\in(0,1)$ such that if~$M\ge M_0$ and~$\mu\in(0,\mu_0]$ the following
claim holds true.

Let~$E\subset\R^2$ be~$s$-minimal in~${\mathcal{Q}}_{M,4}$ and such that
\begin{eqnarray} \label{soto}
&& E\cap \{ x_1\in(-M,0)\} \subseteq \{ x_2\le -\lambda x_1\},\\
\label{stretto}
{\mbox{and }}&& E\cap {\mathcal{Q}}_{M,M} \subseteq \{ x_1\in(0,\mu)\}
.\end{eqnarray}
Then, 
\begin{equation}\label{NOAU}
E\cap {\mathcal{Q}}_{\frac{M}2,1}=\varnothing.\end{equation}
\end{proposition}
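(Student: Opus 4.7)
The plan is to argue by contradiction via a horizontal sliding-ball barrier and a strict maximum principle. Suppose toward contradiction that there exists $x_0 = (a_1, a_2) \in E \cap \mathcal{Q}_{M/2, 1}$; by~\eqref{stretto} and the definition of~$\mathcal{Q}_{M/2, 1}$, we have $a_1 \in (0, \mu)$ and $|a_2| < 1$. Fix a radius $R := 1/2$ and consider the family of balls $B_R(p_\tau)$ centred at $p_\tau := (a_1 + R + \tau,\, a_2)$ for $\tau \in \mathbb R$. At $\tau = 0$ the closed ball $\overline{B_R(p_0)}$ is externally tangent to $\{y_1 = a_1\}$ at $x_0$; at $\tau = \mu$ it lies in $\{y_1 > \mu\} \cap \{|y_2 - a_2| < R\}$, which for $M_0$ large is contained in~$\mathcal{Q}_{M, M}$, and then~\eqref{stretto} gives $B_R(p_\mu) \cap E = \varnothing$. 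A standard continuity argument (for instance, by letting $\tau_*$ be the largest $\tau$ at which $\inf_{y \in E}|p_\tau - y| = R$) produces a first contact point $x^* \in \partial B_R(p_{\tau_*}) \cap \partial E$ with $B_R(p_{\tau_*}) \subseteq E^c$ and $\tau_* \in [0, \mu]$.

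The key geometric feature is that the ball never exits the right half-plane: $p_{\tau_*, 1} = a_1 + R + \tau_* > R$, so $\overline{B_R(p_{\tau_*})} \subseteq \{y_1 \ge a_1\}$. Moreover $|y_2| \le |a_2| + R < 3/2$ and the $y_1$-extent is at most $a_1 + 2R + \mu \le 3$, so for $M_0 \ge 5$ and $\mu_0 \le 1$ the whole ball is compactly contained in $\mathcal{Q}_{M, 4}$, and so is $x^*$. By interior regularity of planar $s$-minimal sets, $\partial E$ is smooth near $x^*$ and the Euler--Lagrange equation holds classically, $\int_{\mathbb R^2} \tilde\chi_E(y)\,|x^*-y|^{-(2+s)}\,dy = 0$. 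Setting $B := B_R(p_{\tau_*})$, the inclusion $B \subseteq E^c$ yields the pointwise bound
$$
\tilde\chi_E(y) - \bigl(\chi_B(y) - \chi_{B^c}(y)\bigr) = 2\,\chi_{E^c \setminus B}(y) \ge 0,
$$
with strict inequality on the positive-measure set $E^c \setminus B$. Integrating against the strictly positive kernel $|x^*-y|^{-(2+s)}$ (which is locally integrable in this comparison, since $\partial E$ and $\partial B$ share the same tangent at $x^*$, so $E^c \setminus B$ has width $O(r^2)$ there) and using the explicit scaling identity $\int_{\mathbb R^2} (\chi_{B^c} - \chi_B)\,|x^*-y|^{-(2+s)}\,dy = -c_s/R^s$ for the nonlocal curvature of a ball at a boundary point, we obtain
$$
0 \;=\; \int_{\mathbb R^2} \frac{\tilde\chi_E(y)}{|x^*-y|^{2+s}}\,dy \;>\; \frac{c_s}{R^s} \;>\; 0,
$$
a contradiction which proves~\eqref{NOAU}.

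The main obstacle is arranging the sliding so that the first contact with $\partial E$ lies strictly in the interior of the minimization region~$\mathcal{Q}_{M, 4}$. If the ball were permitted to stray into $\{y_1 < 0\}$, it could first touch the portion of~$E$ controlled only by~\eqref{soto}---sitting outside the domain of minimality---and the Euler--Lagrange equation would not be available there. The horizontal slide above, anchored at $x_0$ and using a small fixed radius, sidesteps this by keeping the ball inside $\{y_1 > 0\} \cap \{|y_2| < 3/2\} \subset \mathcal{Q}_{M, 4}$ at all times. Consequently hypothesis~\eqref{soto} is in fact not invoked in this simplified argument; a more general variant would allow larger balls that dip into~$\{y_1 < 0\}$ and use~\eqref{soto} to keep the ball above the line $\{y_2 = -\lambda y_1\}$, but this refinement is not needed for~\eqref{NOAU}.
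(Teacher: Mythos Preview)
Your argument has a fatal sign error in the nonlocal curvature of the ball. For a disk $B=B_R(p)$ and $x^*\in\partial B$, the principal-value integral
\[
\int_{\R^2}\frac{\chi_{B^c}(y)-\chi_B(y)}{|x^*-y|^{2+s}}\,dy
\]
is \emph{strictly positive} (equal to $+c_s/R^s$), not $-c_s/R^s$: a convex set has positive nonlocal mean curvature. One way to see this is to subtract the tangent half-plane (which has zero curvature); what remains is $2\int_{\{y_1<p_1-R\}\setminus B}|x^*-y|^{-(2+s)}\,dy>0$ (after a rotation placing $x^*$ on the left of $B$). With the correct sign, your identity $\tilde\chi_E=(\chi_B-\chi_{B^c})+2\chi_{E^c\setminus B}$ integrates to
\[
0=\int_{\R^2}\frac{\tilde\chi_E(y)}{|x^*-y|^{2+s}}\,dy=-\frac{c_s}{R^s}+2\int_{E^c\setminus B}\frac{dy}{|x^*-y|^{2+s}},
\]
which is perfectly consistent: the second term is positive and exactly compensates the first. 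There is no contradiction here, and the ``strict inequality on the positive-measure set $E^c\setminus B$'' only tells you that $0>-c_s/R^s$, which is true.

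This is why the paper's proof is not a one-line barrier argument. The sliding ball alone gives the wrong-signed bound; to force $\int\tilde\chi_E>0$ at the contact point one must show that the positive term $2\int_{E^c\setminus B}|x^*-y|^{-(2+s)}\,dy$ is \emph{large}, i.e.\ that $E^c\setminus B$ carries substantial mass close to $x^*$. The paper does exactly this via a careful decomposition: hypothesis~\eqref{stretto} guarantees that to the right of the contact point nearly everything is $E^c$, and hypothesis~\eqref{soto} guarantees that to the left (in $\{x_1<0\}$) the set $E$ sits below the line $x_2=-\lambda x_1$, again producing a definite chunk of $E^c$ near height $q_2$. These two contributions give a fixed positive lower bound $c>0$ that beats the error terms $C\mu^{(1-s)/3}+C/M^s$ for $\mu$ small and $M$ large. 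In particular, your claim that~\eqref{soto} is unnecessary is incorrect: without it (or something like it) there is no mechanism to make the curvature strictly positive at the contact point, and the result would fail.
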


\begin{figure}[h]
\centering
\includegraphics[width=13.5 cm]{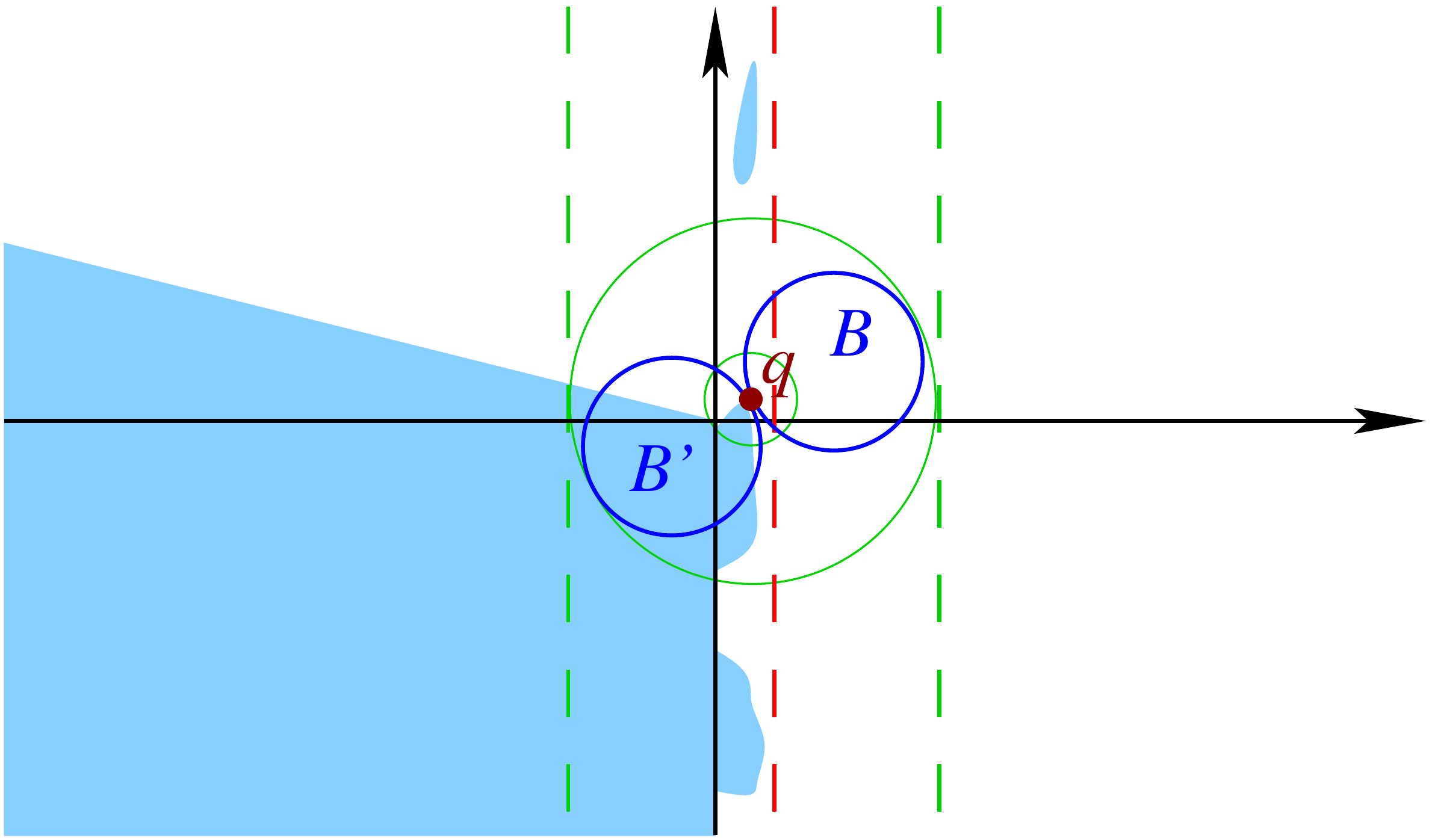}
\caption{\footnotesize\it The geometry involved in the proof of Proposition~\ref{pallata}.}
\label{FIGSCAL}
\end{figure}

\begin{proof} We let~$\vartheta\in(-1,1)$
and~$P_t=(t,\vartheta)\in\R^2$.
We observe that if~$t\ge 2$
then
\begin{eqnarray*} B_1(P_t)&\subseteq& (t-1,t+1)\times(-2,2)\\
&\subseteq &\{x_1>1\},
\end{eqnarray*}
and therefore, by~\eqref{stretto},
\begin{equation*}
B_1( P_t)\cap E\cap{\mathcal{Q}}_{M,M}\subseteq\{x_1>1\}\cap\{x_1\in(0,\mu)\}
=\varnothing.\end{equation*}
We now take
$$ t_\star:=\inf\left\{ t\in\left[1,\frac{M}{8}\right] {\mbox{ s.t. }}
B_1( P_t)\cap E\cap{\mathcal{Q}}_{M,M}
=\varnothing\right\}.$$
We claim that
\begin{equation}\label{SLIDE}
t_\star=1.
\end{equation}
To prove this, we argue by contradiction.
If not, there exist~$t_\star\in\left(1,\frac{M}{8}\right]$ and~$q\in \big(\partial B_1(P_{t_\star})\big)\cap(\partial E)$,
with~$B:=B_1(P_{t_\star})\subseteq E^c$.

We remark that the first coordinate of~$P_{t_\star}$
is equal to~$t_\star$, and therefore
\begin{equation}\label{q1do}
q_1\ge t_\star-1>0,
\end{equation}
and also
\begin{equation}\label{q1do2}
q_1\le t_\star+1\le1+\frac{M}{8}.
\end{equation}
Similarly, we see that
\begin{equation}\label{q2do}
|q_2|\le 2.
\end{equation}
We let~$\nu$ be the exterior normal to~$B$ at~$q$
and~$B':=B_1(P_{t_\star}+2\nu)$. We observe that~$B'$
is the symmetric ball to~$B$ with respect to the tangent plane through~$q$, see Figure~\ref{FIGSCAL}.
As a consequence, 
taking~$\rho:=\sqrt[3]{\mu}$,
since~$B_\rho(q)\cap E\subseteq B^c$,
\begin{equation}\label{9is734}
\int_{B_\rho(q)} \frac{\tilde\chi_E(x)}{|x-q|^{2+s}}dx\ge-
\int_{B_\rho(q)\setminus (B\cup B')} \frac{dx}{|x-q|^{2+s}}\ge- C\rho^{1-s}=- C\mu^{\frac{1-s}{3}},
\end{equation}
for some~$C>0$.

Now, we let
\begin{eqnarray*}&& A_1:= \big(B_1(q)\setminus B_\rho(q)\big)\cap\{ |x_1-q_1|\le 2\mu\}
\\{\mbox{and}}\qquad&&
A_2:= \big(B_1(q)\setminus B_\rho(q)\big)\cap\{ |x_1-q_1|> 2\mu\}.\end{eqnarray*}
We observe that
\begin{equation}\label{7362}
\int_{A_1} \frac{\tilde\chi_E(x)}{|x-q|^{2+s}}dx\ge
-\int_{ B_1(q)\cap \{ |x_1-q_1|\le 2\mu\} }\frac{dx}{\rho^{2+s}}\ge-\frac{C\mu}{\rho^{2+s}}=-C\mu^{\frac{1-s}{3}}.
\end{equation}
Now we let
$$ {\mathcal{Q}}^\star:=(q_1-1,q_1+1)\times \left( -\frac{M}{4},\frac{M}{4}\right),$$ and we
claim that
\begin{equation}\label{Pija}
E\cap \big({\mathcal{Q}}^\star\setminus B_\rho(q)\big)\cap\{ x_1-q_1> 2\mu\}=\varnothing.
\end{equation}
Suppose not and let~$p$ be in this set. Then, by~\eqref{q1do},
$$ p_1>2\mu+q_1>2\mu.$$
Similarly, by~\eqref{q1do2},
$$ p_1<q_1+1\le 2+\frac{M}{8}<M,$$
as long as~$M$ is large enough.
On the other hand, by~\eqref{q2do}, 
$$ |p_2|\le |p_2-q_2|+|q_2|\le \frac{M}{4}+2<M,$$
as long as~$M$ is large enough. These observations give that
$$ p\in E\cap {\mathcal{Q}}_{M,M}\cap \{x_1>2\mu\},$$
which is in contradiction with~\eqref{stretto}, and hence it proves~\eqref{Pija}.

Then, from~\eqref{Pija} we deduce that
$$ E\cap A_2\subseteq \{x_1-q_1<-2\mu\},$$
and consequently
\begin{equation*}
\int_{A_2} \frac{\tilde\chi_E(x)}{|x-q|^{2+s}}dx\ge0.\end{equation*}
Then, combining this inequality with~\eqref{7362}, we conclude that
\begin{equation*}
\int_{B_1(q)\setminus B_\rho(q)} \frac{\tilde\chi_E(x)}{|x-q|^{2+s}}dx\ge-C\mu^{\frac{1-s}{3}}.\end{equation*}
This and~\eqref{9is734} yield that
\begin{equation}\label{Xue}
\int_{B_1(q)} \frac{\tilde\chi_E(x)}{|x-q|^{2+s}}dx\ge-C\mu^{\frac{1-s}{3}},\end{equation}
up to renaming~$C$.

Furthermore, by~\eqref{Pija},
\begin{equation*}
\int_{{\mathcal{Q}}^\star\setminus B_1(q)} \frac{\tilde\chi_E(x)}{|x-q|^{2+s}}dx\ge
0,\end{equation*}
and therefore
\begin{equation}\label{7u1414373}
\int_{\{|x_1-q_1|<1\}\setminus B_1(q)} \frac{\tilde\chi_E(x)}{|x-q|^{2+s}}dx\ge-
\int_{\R^2\setminus B_{M/8}(q)} \frac{dx}{|x-q|^{2+s}}\ge -\frac{C}{M^s}.
\end{equation}
Now, we observe that
\begin{equation}\label{srta}
E\cap\left\{x_1-q_1>1\right\}\subseteq \R^2\setminus B_{M/8}(q).
\end{equation}
Indeed, if not, recalling~\eqref{q1do}, \eqref{q1do2} and~\eqref{q2do}
we see that there exists~$p=(p_1,p_2)\in E$ with
\begin{eqnarray*}&& p_1<q_1+\frac{M}{8}\le 1+\frac{M}{8}+\frac{M}{8}<M,\\
&& p_1>q_1+1>1>\mu,\\
&&|p_2|\le|p_2-q_2|+|q_2|<
\frac{M}{8}+1+\frac{M}{8}<M,
\end{eqnarray*}
and these inequalities produce a contradiction with~\eqref{stretto}, thus proving~\eqref{srta}.

Then, by~\eqref{soto} and~\eqref{srta},
$$ \int_{B_{M/8}(q)\cap\{|x_1-q_1|>1\}} \frac{\tilde\chi_E(x)}{|x-q|^{2+s}}dx\ge c,
$$
for some~$c>0$, and, as a consequence,
\begin{equation*}\begin{split}&
\int_{\{|x_1-q_1|>1\}\setminus B_1(q)} \frac{\tilde\chi_E(x)}{|x-q|^{2+s}}dx=
\int_{\{|x_1-q_1|>1\}} \frac{\tilde\chi_E(x)}{|x-q|^{2+s}}dx\\ &\qquad\qquad\ge
c-\int_{\R^2\setminus B_{M/8}(q)} \frac{dx}{|x-q|^{2+s}}\ge c-\frac{C}{M^s},\end{split}
\end{equation*}
for some~$C>0$.
This and~\eqref{7u1414373} give that
$$ \int_{\R^2\setminus B_1(q)} \frac{\tilde\chi_E(x)}{|x-q|^{2+s}}dx\ge c-\frac{C}{M^s},$$
up to renaming~$C$, which, combined with~\eqref{Xue}, yields that
$$ \int_{\R^2} \frac{\tilde\chi_E(x)}{|x-q|^{2+s}}dx\ge c-\frac{C}{M^s}-C\mu^{\frac{1-s}{3}}.$$
The latter quantity is strictly positive if~$M$ is sufficiently large and~$\mu$
is sufficiently small, which is in contradiction
with the $s$-minimality of~$E$.

This completes the proof of~\eqref{SLIDE}.
Then, by~\eqref{SLIDE}, we obtain that
$$ \varnothing=\bigcup_{t\ge1}
B_1( P_t)\cap E\cap{\mathcal{Q}}_{M,M}\supseteq (\R\times\{\vartheta\})
\cap E\cap{\mathcal{Q}}_{M,M},$$
from which~\eqref{NOAU} plainly follows since~$\vartheta\in (-1,1)$
is arbitrary.
\end{proof}

As a consequence of Proposition~\ref{pallata}, we have
that $s$-minimal sets which approach 
empty sets in the halfspace are necessarily locally empty:

\begin{corollary}\label{VUOCO}
There exists~$M_0>1$ such that the following claim holds true.
Let~$\e_k$ be an infinitesimal sequence. Let~$\lambda>0$
and~$E_k$ be a sequence of~$s$-minimal sets in~$
{\mathcal{Q}}_{\frac1{\e_k},\frac1{\e_k}}$
such that
$$ E_k\cap \{ x_1<0\} \subseteq \{ x_2\le -\lambda x_1\}.$$
Assume that, for every~$M\ge M_0$,
\begin{equation}\label{8qwdyugcvdggfg723trgf}{\mbox{$
\chi_{E_k}\to 0$ in~$L^1\left(\left(\frac1{2M},2M\right)\times(-2M,2M) \right)$.}}\end{equation} Then, for every~$M\ge M_0$
there exists~$k_M\in\N$ such that if~$k\ge k_M$ then
$$ E_k\cap {\mathcal{Q}}_{M,1}=\varnothing.$$
\end{corollary}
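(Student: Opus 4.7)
My plan is to reduce the corollary to Proposition~\ref{pallata} by showing that, for each fixed $M\ge M_0$, the slab inclusion~\eqref{stretto} becomes satisfied by $E_k$ for all sufficiently large $k$. To this end, I would fix $M\ge M_0$ (where $M_0$ is chosen at least as large as the constant furnished by Proposition~\ref{pallata}), set $\tilde{M}:=2M$ and $\mu:=\mu_0$, and then verify each hypothesis of that proposition for $E_k$ in turn. The $s$-minimality of $E_k$ on ${\mathcal{Q}}_{2M,4}$ holds as soon as $1/\e_k\ge 2M$, and the half-plane inclusion~\eqref{soto} is inherited directly from the assumption on $E_k\cap\{x_1<0\}$, so neither of these two points is problematic.

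The nontrivial step is to establish the slab inclusion~\eqref{stretto} for $E_k$, namely that for $k$ large
\[
E_k\cap\bigl((\mu_0,\,2M)\times(-2M,\,2M)\bigr)=\varnothing.
\]
To upgrade the $L^1$ smallness given by~\eqref{8qwdyugcvdggfg723trgf} to actual emptiness, I would invoke the uniform density estimates for $s$-minimal sets collected in Appendix~\ref{T5PAO}: there exists a universal $c>0$ such that whenever $E$ is $s$-minimal in a domain containing $B_r(x)$ and $x\in E$, one has $|E\cap B_r(x)|\ge c\,r^2$. Concretely, I would choose an auxiliary scale $M':=\max\{M_0,\,4M,\,2/\mu_0\}$, so that the rectangle $R:=(\mu_0,2M)\times(-2M,2M)$ sits inside $(1/(2M'),2M')\times(-2M',2M')$ with a definite buffer $r_0>0$ that is independent of $k$. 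If some $x\in E_k$ belonged to $R$, then for $k$ large enough that $1/\e_k\ge 2M'$, the density estimate applied on $B_{r_0}(x)$ (which is contained in the larger rectangle, hence in the minimality domain of $E_k$) would yield $|E_k\cap B_{r_0}(x)|\ge c\,r_0^2$, a positive lower bound independent of $k$. Since hypothesis~\eqref{8qwdyugcvdggfg723trgf} applied with this specific $M'$ forces the measure of $E_k$ on the larger rectangle to vanish as $k\to\infty$, we must have $E_k\cap R=\varnothing$ for every $k$ beyond some threshold $k_M$.

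The main obstacle is precisely this promotion from $L^1$ vanishing to pointwise emptiness; without the density estimates, one could only conclude that $E_k$ occupies a subset of $R$ of small measure, not that it is actually empty, and the slab hypothesis of Proposition~\ref{pallata} is qualitative rather than integral in nature. Once the slab inclusion is in place, however, Proposition~\ref{pallata} applies directly with parameters $\tilde{M}=2M$ and $\mu=\mu_0$, and delivers
\[
E_k\cap{\mathcal{Q}}_{M,1}=E_k\cap{\mathcal{Q}}_{\tilde{M}/2,1}=\varnothing,
\]
which is exactly the desired conclusion.
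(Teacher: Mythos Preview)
Your proposal is correct and follows essentially the same route as the paper: upgrade the $L^1$ smallness to actual emptiness on a slab via a density/clean-ball argument, and then feed the result into Proposition~\ref{pallata}. The paper argues by contradiction with a point $p_k\in E_k$ in the slab, uses the clean-ball condition from~\cite{MR2675483} to find $B_{cr}(q_k)\subseteq E_k$ nearby, and then contradicts~\eqref{8qwdyugcvdggfg723trgf}; it applies Proposition~\ref{pallata} with $\mu=1/M$ rather than $\mu=\mu_0$, but this is cosmetic. One small imprecision: the density lemma in Appendix~\ref{T5PAO} is stated for points of $\partial E$ (and carries an extra exterior-mass hypothesis), so the statement you quote for ``$x\in E$'' is not literally what is recorded there; you should either cite Theorem~4.1 of~\cite{MR2675483} directly, or note (as the paper does) that if $B_{r/2}(x)\not\subseteq E_k$ then there is a boundary point in $B_{r/2}(x)$ at which the interior clean-ball estimate applies.
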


\begin{proof} Fix~$M>0$, to be taken sufficiently large in what follows. First of all, we prove that, if~$k$ is large enough, then
\begin{equation}\label{Ekin}
E_k\cap {\mathcal{Q}}_{M,M}\subseteq\left\{x_1\in\left(0,\frac1M\right)\right\}.\end{equation}
Suppose not. Then there are infinitely
many values of~$k$ for which there exists a point~$p_k=(p_{k1},p_{k2})\in E_k$
with~$p_{k1}\in\left[\frac1M,M\right]$ and~$|p_{k2}|\le M$.
We let~$r:=\frac{1}{10 \,M}$ and we claim that there exists~$q_k\in B_{2r}(p_k)$
and a constant~$c\in\left(0,\frac12\right)$ such that
\begin{equation}\label{palk}
B_{cr}(q_k)\subseteq E_k.
\end{equation}
Indeed, if~$B_{r/2}(p_k)\subseteq E_k$, then~\eqref{palk} holds true with~$q_k:=p_k$.
If instead~$B_{r/2}(p_k)\cap E_k\ne\varnothing$,
there exists a point~$z_k=(z_{k1},z_{k2})\in B_{r/2}(p_k)\cap (\partial E_k)$.
We notice that if~$\zeta=(\zeta_1,\zeta_2)\in B_r(z_k)$ then
\begin{eqnarray*}&&
\zeta_1\ge z_{k1}-r\ge p_{k1}-\frac{3r}{2}\ge\frac{1}{M}-\frac{3r}{2}=\frac{17}{20\,M}>\e_k
\\{\mbox{and }}&&|\zeta_i|\le|z_{ki}|+r\le|p_{ki}|+\frac{3r}{2}\le M+\frac{3}{20\,M}<
\frac1{\e_k}
\end{eqnarray*}
for all~$i\in\{1,2\}$, which implies that~$
B_r(z_k)\subseteq{\mathcal{Q}}_{\frac1{\e_k},\frac1{\e_k}}$,
and therefore we can use the clean ball condition
in~\cite{MR2675483} and conclude the proof of~\eqref{palk}.

Moreover, up to a subsequence, we can assume that~$p_k\to p$,
for some~$p\in \left[\frac1M,M\right]\times[- M,M]$,
and we observe that~$B_{4r}(p)\supseteq B_{cr}(q_k)$
if $k$ is sufficiently large.
Accordingly, by~\eqref{palk}, we find that
$$ \int_{B_{4r}(p)} \chi_{E_k}(x)\,dx\ge\int_{B_{cr}(q_k)} \chi_{E_k}(x)\,dx=
|B_{cr}(q_k)|=c'\,r^n,$$
for some~$c'>0$, which is in contradiction with~\eqref{8qwdyugcvdggfg723trgf},
and hence it proves~\eqref{Ekin}.

In light of~\eqref{Ekin}, we have that~$E_k$ satisfies the assumptions
of Proposition~\ref{pallata}. Consequently, the desired result follows from~\eqref{NOAU},
up to renaming~$M$.
\end{proof}

\section{Alternative on the second blow-up}\label{SALE}

In this section, we analyze the $s$-minimal cone arising from the second blow-up,
as described in Lemma~\ref{secondo blow up}, and we give a sharp
alternative for its behavior.
Roughly speaking, the alternative 
is that either a given $s$-minimal set
exhibits the stickiness phenomenon or its second blow-up
is a half-plane. 

To state this result precisely,
we say that 
a set~$A$ is {\em trivial} in~$B$ if either~$A\cap B=B$ or~$A\cap B=\varnothing$.
Then, we have:

\begin{theorem}\label{P:AL}
Let~$\Omega:=(0,1)\times\R$.
Let~$E$ be an $s$-minimal graph in~$\Omega$.
Suppose that the boundary of~$E$
meets the boundary of~$\Omega$ at the origin and it is $C^1$ near the origin from
outside~$\Omega$. 

Let~$E_{00}$ be the second blow-up cone, as given in Lemma~\ref{secondo blow up}.
Then the following alternative holds true:
\begin{itemize}
\item either~$E_{00}$ is a halfplane,
\item or there exists~$r>0$
such that~$E$ is trivial in~$(0,r)\times(-r,r)$.
\end{itemize}
\end{theorem}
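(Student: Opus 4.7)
The plan is to analyze the structure of the second blow-up cone~$E_{00}$ provided by Lemma~\ref{secondo blow up}: combining the cone property with the fact that~$E$ is a graph leaves only three possibilities for~$E_{00}\cap\{x_1>0\}$, two of which fall under the scope of Corollary~\ref{VUOCO}, while the third is either identified as a halfplane or ruled out by the Euler--Lagrange equation for~$E_{00}$.

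First, I would exploit the graphicality of~$E$ to constrain~$E_{00}$. Since~$E$ is the subgraph in~$\Omega$ of a function~$u$, each~$E_k=kE$ is, for~$x_1\in(0,k)$, the subgraph of~$u_k(x_1):=ku(x_1/k)$, so each of its vertical sections at~$x_1>0$ is a left half-line~$(-\infty,u_k(x_1))$. Passing the~$L^1_{\rm loc}$ convergence of Lemma~\ref{Sempli} through Fubini, and using that an~$L^1$-limit of characteristic functions of left half-lines is again of the same form (with upper endpoint possibly~$\pm\infty$), I would conclude that for a.e.~$x_1>0$ the vertical section of~$E_{00}$ at~$x_1$ has the form~$(-\infty,a(x_1))$ with~$a(x_1)\in[-\infty,+\infty]$. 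The cone property~$tE_{00}=E_{00}$ then forces~$a$ to be homogeneous of degree one, which in a single variable leaves precisely three possibilities: $a\equiv-\infty$, $a\equiv+\infty$, or $a(x_1)=cx_1$ for some constant~$c\in\R$.

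Second, I would rule out the bent-halfplane case $a(x_1)=cx_1$ with $c\neq v'(0)$ by means of the nonlocal Euler--Lagrange equation. Because of~\eqref{BL2-0}, the set~$E_{00}$ is locally $s$-minimal in~$\{x_1>0\}$, so at the smooth boundary point~$p:=(1,c)$ the nonlocal curvature integral must vanish. The auxiliary halfplane~$F_c:=\{x_2<cx_1\}$ agrees with~$E_{00}$ on~$\{x_1\ge 0\}$ and has zero nonlocal curvature at~$p$ by straight-line symmetry, so the entire curvature of~$E_{00}$ at~$p$ comes from the symmetric difference~$E_{00}\triangle F_c$. In view of~\eqref{BL3-0}, this symmetric difference is contained in the open wedge
$$W:=\{x_1<0\}\cap\bigl\{\min(cx_1,v'(0)\,x_1)<x_2<\max(cx_1,v'(0)\,x_1)\bigr\}$$
and lies entirely on one side of~$F_c$; a direct computation then shows that the contribution of~$W$ to the nonlocal curvature of~$E_{00}$ at~$p$ equals~$\mp 2\int_W |p-y|^{-2-s}\,dy$, a nonzero quantity whose sign is opposite to that of~$c-v'(0)$. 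This contradicts the vanishing of the curvature, so~$c=v'(0)$ and~$E_{00}=\{x_2<v'(0)\,x_1\}$ is a halfplane, which is the first alternative in the statement.

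In the remaining cases~$a\equiv\pm\infty$, I would invoke Corollary~\ref{VUOCO} to transfer triviality from the~$E_k$ back to~$E$. In the case~$a\equiv-\infty$, Lemma~\ref{Sempli} gives~$\chi_{E_k}\to 0$ in~$L^1_{\rm loc}(\R^2)$; combined with the observation that, by the $C^{1,\alpha}$ regularity of~$v$ and the normalization~$v(0)=0$, one has $E_k\cap\{x_1<0\}\subseteq\{x_2\le-\lambda x_1\}$ for a fixed $\lambda>0$ and all $k$ large, Corollary~\ref{VUOCO} yields~$E_k\cap\mathcal{Q}_{M,1}=\varnothing$ for some large~$M$ and all large~$k$, which rescales to $E\cap(0,M/k)\times(-1/k,1/k)=\varnothing$ and gives the second alternative. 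The case~$a\equiv+\infty$ is handled symmetrically by applying the same argument to the $s$-minimal complement~$E^c$ after a reflection in the~$x_2$ coordinate, so as to restore the sign condition of Corollary~\ref{VUOCO} on the left halfplane. The main obstacle I anticipate is the curvature computation in the second step: this is where a genuine, if mild, piece of rigidity for planar $s$-minimal cones has to be extracted by hand, and it must be done without the help of a monotonicity formula.
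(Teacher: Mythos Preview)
Your proposal is correct and follows essentially the same route as the paper's proof: the three-way classification of~$E_{00}\cap\{x_1>0\}$ via graphicality and the cone property, the Euler--Lagrange computation at~$(1,c)$ to force~$c=v'(0)$, and the use of Corollary~\ref{VUOCO} (together with Lemma~\ref{Sempli}) to upgrade triviality of~$E_{00}$ to local triviality of~$E$. The paper organizes the logic slightly differently (first proving that triviality of~$E_{00}$ implies triviality of~$E$, then assuming non-triviality and deducing the halfplane), and it invokes the Hausdorff convergence of Corollary~\ref{Sempli-APP} rather than your Fubini argument to justify the graphical structure of~$E_{00}$, but these are presentational differences only.
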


\begin{proof} {F}rom Lemma~\ref{secondo blow up}, it follows
that~$E_{00}$ has a graphical structure
in the sense that if~$p=(p_1,p_2)\in E_{00}$ then~$(p_1,\tau)\in E_{00}$ for all~$\tau\le p_2$
(actually, a similar statement also holds for~$E_0$, using instead Lemma~\ref{primo blow up},
but we focus here on~$E_{00}$).

In the notation of~\eqref{Qrr}, we claim that
\begin{equation}\label{8uJS-x}
{\mbox{if~$E_{00}$ is trivial in~$\{x_1>0\}$, then
there exists~$r>0$
such that~$E$ is trivial in~${\mathcal{Q}}_{r,r}$.}}
\end{equation}
To prove this, let us assume that
\begin{equation*}
E_{00}\cap\{x_1>0\}=\varnothing\end{equation*}
(the case~$E_{00}\cap\{x_1>0\}= \{x_1>0\}$ can be treated similarly).
By Lemma~\ref{Sempli}, we have that~$\chi_{ E_{k}}\to \chi_{ E_{00}}=0$
in~$L^1_{\rm loc}((0,+\infty)\times\R)$, up to a subsequence
(and, in fact, locally in the Hausdorff distance, thanks to Corollary~\ref{Sempli-APP}).
Hence, we can exploit Corollary~\ref{VUOCO} with~$M:=1$,
and find that
\begin{equation*} 
E_{k}\cap {\mathcal{Q}}_{1,1}=\varnothing,\end{equation*}
as long as~$k$ is sufficiently large.
We can thereby conclude that
\begin{equation*} 
E\cap {\mathcal{Q}}_{\frac{1}k,\frac1k}=\varnothing,
\end{equation*}
which completes the proof of~\eqref{8uJS-x}.

Now, to complete the proof of Theorem~\ref{P:AL},
we suppose that~$E$ is not trivial in~${\mathcal{Q}}_{r,r}$:
then the desired claim in Theorem~\ref{P:AL} is established
if we show that~$E_{00}$ is a half-plane. To this end,
we know from~\eqref{8uJS-x} that~$E_{00}$ cannot be trivial in~$\{x_1>0\}$.
Consequently,
since~$E_{00}$ is a cone, in light of Lemma~\ref{secondo blow up},
we can write that
$$ E_{00}\cap\{x_1<0\}=\{x_2<a x_1\}\qquad{\mbox{ and }}\qquad
E_{00}\cap\{x_1>0\}=\{x_2<b x_1\},$$
for some~$a$, $b\in\R$. We claim that
\begin{equation}\label{ab}
a=b.
\end{equation}
Indeed, suppose not.
Then, we compute the nonlocal mean curvature equation at the point~$p:=(1,b)\in\partial E_{00}$,
symmetrizing the integral with respect to the line~$\{x_2=b x_1\}$ and we obtain that
$$ \int_{\R^2} \frac{\chi_{\R^2\setminus E_{00}}(p)-\chi_{E_{00}}(p)}{|x-p|^{2+s}}\,dx\ne0.$$
This is a contradiction with the minimality of~$E_{00}$, and hence it proves~\eqref{ab},
thus completing the proof 
of Theorem~\ref{P:AL}.
\end{proof}

{F}rom the blow-up alternative in Theorem~\ref{P:AL}, we obtain
that an $s$-minimal graph cannot develop boundary corners,
unless it develops boundary stickiness (that is,
no ``nude'' Lipschitz singularity is possible, since
any corner would naturally produce boundary discontinuities). The precise
result that we have is the following:

\begin{corollary}\label{P:AL:IU}
Let~$\Omega:=(0,1)\times\R$.
Let~$E$ be a locally $s$-minimal graph in~$\Omega$.
Suppose that the boundary of~$E$
meets the boundary of~$\Omega$ at the origin and it is $C^1$
near the origin from
outside~$\Omega$. Assume also that the tangent line
of~$\partial E$ from outside~$\Omega$ is of the form~$x_2=\ell x_1$,
with~$\ell\in\R$.

Then:
\begin{itemize}
\item if
\begin{equation}\label{7l29} \{x\in\R^2 {\mbox{ s.t. }} x_1\in(0,\delta)
{\mbox{ and }}x_2<(\ell+\delta) x_1\}\subseteq E\end{equation}
for some~$\delta>0$, then there exists~$\delta'>0$ such that~$(0,\delta')^2\subseteq E$;
\item if
$$ \{x\in\R^2 {\mbox{ s.t. }} x_1\in(0,\delta)
{\mbox{ and }}x_2<(\ell-\delta) x_1\}\supseteq E$$
for some~$\delta>0$, then there exists~$\delta'>0$ such that~$
(0,\delta')\times(-\delta',0)\subseteq \R^2\setminus E$.
\end{itemize}
\end{corollary}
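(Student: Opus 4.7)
The plan is to deduce Corollary~\ref{P:AL:IU} from the blow-up alternative of Theorem~\ref{P:AL} by showing that, under either set of hypotheses, the second blow-up cone~$E_{00}$ cannot be a half-plane. By Lemma~\ref{secondo blow up} and the assumption~$v'(0)=\ell$ on the outer tangent line,~$E_{00}$ is a subgraph cone with $E_{00}\cap\{x_1<0\}=\{x_2<\ell x_1\}\cap\{x_1<0\}$, so the only half-plane compatible with this exterior datum is $H:=\{x_2<\ell x_1\}$. Hence the whole task reduces to ruling out the equality~$E_{00}=H$ and then deciding which of the two trivial alternatives for~$E$ occurs in the rectangle provided by Theorem~\ref{P:AL}.

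For case~(i), the plan is to transfer the sector inclusion to the blow-up scale: rescaling by~$k$ gives $\{x_1\in(0,k\delta),\,x_2<(\ell+\delta)x_1\}\subseteq E_k$, and the $L^1_{\rm loc}$-convergence $\chi_{E_k}\to\chi_{E_{00}}$ of Lemma~\ref{Sempli} forces the full open cone $S_+:=\{x_1>0,\,x_2<(\ell+\delta)x_1\}$ to lie inside~$E_{00}$ up to a null set. Since the strip $S_+\setminus H$ has positive two-dimensional measure,~$E_{00}\neq H$, and Theorem~\ref{P:AL} yields some~$r>0$ with~$E$ trivial in~$(0,r)\times(-r,r)$. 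The assumption~\eqref{7l29} already exhibits points of~$E$ in this rectangle (e.g.\ $(x_1,-r/2)$ for~$x_1$ small enough that~$(\ell+\delta)x_1>-r/2$), so triviality must mean $E\supseteq(0,r)\times(-r,r)$, and in particular $(0,r)^2\subseteq E$, which is the desired conclusion with~$\delta'=r$.

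Case~(ii) is mirror-symmetric: reading the hypothesis as $E\cap\{x_1\in(0,\delta)\}\subseteq\{x_2<(\ell-\delta)x_1\}$, the same rescaling forces the cone $S_-:=\{x_1>0,\,x_2>(\ell-\delta)x_1\}$ to lie in~$\R^2\setminus E_{00}$ up to a null set; positive measure of $H\cap S_-$ again rules out~$E_{00}=H$, and a point of the complement of~$E$ in the rectangle (e.g.\ $(x_1,r/2)$ for~$x_1$ sufficiently small) forces this time $E\cap((0,r)\times(-r,r))=\varnothing$, which yields $(0,r)\times(-r,0)\subseteq\R^2\setminus E$ as required.

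The substantive work is all in Theorem~\ref{P:AL}, and the argument here is essentially bookkeeping between the original and blow-up scales, so I do not expect serious obstacles. The one point deserving attention is the upgrade from a set-theoretic inclusion for~$E$ to a measure-theoretic inclusion for~$E_{00}$ under the mere $L^1_{\rm loc}$-convergence from Lemma~\ref{Sempli}; this is routine since the sectors~$S_\pm$ are open and of positive measure, so that any compact subset of~$S_\pm$ is, for large~$k$, contained in (respectively disjoint from)~$E_k$ up to a set of arbitrarily small measure.
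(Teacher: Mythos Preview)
Your proposal is correct and follows essentially the same route as the paper: show that the hypothesis forces~$E_{00}\cap\{x_1>0\}\supseteq\{x_2<(\ell+\delta)x_1\}\cap\{x_1>0\}$ (respectively~$\subseteq\{x_2<(\ell-\delta)x_1\}$), conclude that~$E_{00}$ is not a half-plane, and invoke Theorem~\ref{P:AL}. The paper's proof is terser---it simply records the two inclusions for~$E_{00}$ and says ``the claim follows from Theorem~\ref{P:AL}''---whereas you spell out both the passage to the blow-up limit via the $L^1_{\rm loc}$-convergence of Lemma~\ref{Sempli} and the identification of which trivial alternative must hold in the rectangle; these are exactly the details the paper leaves implicit.
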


\begin{proof}
We prove the first claim of Corollary~\ref{P:AL:IU}, since the second
one would then follow by considering complementary sets.
If~\eqref{7l29} is satisfied, then the second blow-up~$E_{00}$ in Lemma~\ref{secondo blow up}
would satisfy
\begin{eqnarray*}&&
E_{00}\cap \{x_1<0\}=\{x_2<\ell x_1\}\cap\{x_1<0\}\\
{\mbox{and }}&&E_{00}\cap \{x_1>0\}\supseteq\{x_2<(\ell+\delta) x_1\}\cap\{x_1>0\}.
\end{eqnarray*}
In particular, $E_{00}$ is not a half-plane, and hence the claim
follows from Theorem~\ref{P:AL}.
\end{proof}

It is interesting to notice that the boundary behavior described in Corollary~\ref{P:AL:IU}
is significantly different not only with respect to the case of classical minimal surfaces,
but also with respect to the case of nonlocal capillarity problems.
Indeed, in the nonlocal capillarity theory the minimizers
satisfy uniform density estimates at the boundary, as proved in Theorem~1.7
of~\cite{MR3717439}: instead, the minimizers of the nonlocal perimeters
do not possess similar boundary density estimates
in the domain, exactly in view of the stickiness
phenomenon. In a sense, as we will also see in the forthcoming section
and in Appendix~\ref{T5PAO},
the role of this density estimates in our setting will be recovered by
the mass of the set and of its complement which arises from the exterior datum.

\section{Boundary Harnack Inequality}\label{HAR}

The main goal of this section
is to provide a suitable Harnack Inequality
for~$E$ ``up to the boundary''. The results obtained
will then complement those in Section~6.3
of~\cite{MR2675483}, where similar results where
obtained in the interior of the domain. Interestingly,
the boundary Harnack inequality
is stronger than the one in the interior,
since one is able to decrease the oscillation both from above and from below.
More precisely, the main result of this section is the following:

\begin{lemma}\label{HARNA}
Let~$\alpha\in(0,s)$, $\ell\in\R$, $u:\R\to\R$ and
$$ E:=\{ (x_1,x_2)\in\R^2 {\mbox{ s.t. }} x_2<u(x_1)\}.$$
Assume that~$E$ is locally $s$-minimal in~$(0,1)\times\R$.
Then, there exist 
$\eta\in(0,1)$, $\delta\in(0,1)$ and~$\e_0\in(0,\min\{\eta,\delta\})$,
depending only
on~$\alpha$, $\ell$, and~$s$,
such that if~$\e\in(0,\e_0)$ the following statement holds true.

Let
\begin{equation}\label{koche} k_0:=\left\lceil \frac{-\log_2\e}{\alpha}\right\rceil,\end{equation}
and suppose that
\begin{eqnarray}
\label{allinfi1}&&|u(x_1)-\ell x_1|\le\delta\e (2^k)^{1+\alpha} {\mbox{ for all $x_1\in (-2^{k},0)$,
for all $k\in\{0,\dots, k_0\}$,}}\\
\label{allinfi2}&&|u(x_1)-\ell x_1|\le \e (2^k)^{1+\alpha} {\mbox{ for all $x_1\in (0,2^{k})$,
for all $k\in\{0,\dots, k_0\}$.}}
\end{eqnarray}
Then, 
\begin{eqnarray}\label{sotto0}
&& \sup_{x_1\in(0,\eta)} u(x_1)-\ell x_1\le \e(1-\eta^2),\\
{\mbox{and }}&&
\label{sotto}
\inf_{x_1\in(0,\eta)} u(x_1)-\ell x_1\ge \e(\eta^2-1).
\end{eqnarray}
\end{lemma}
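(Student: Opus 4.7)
The strategy is contradiction plus sliding barrier, in the spirit of Proposition~\ref{pallata}. The lower bound \eqref{sotto} follows from the upper bound \eqref{sotto0} by applying the same argument to the complement of $E$ (replacing $u$, $\ell$ with $-u$, $-\ell$), so I focus on \eqref{sotto0}. Suppose, towards a contradiction, there exists $\bar x_1 \in (0, \eta)$ with $u(\bar x_1) - \ell \bar x_1 > \e(1-\eta^2)$.

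I would first construct a one-parameter family of barriers $F_t := \{x_2 < w_t(x_1)\}$ with $w_t(x_1) := \ell x_1 + t\,\e\,\varphi(x_1)$, where $\varphi$ is a nonnegative smooth compactly supported bump adapted to the scale $\eta$, chosen so that the nonlocal mean curvature of $F_t$ has a definite sign and size $\ge c_0 t\,\e$ (for some universal $c_0 > 0$) at every potential touching point with first coordinate in $[-\eta, \eta]$; this is the analogue in our setting of the boundary profile of the linearized nonlocal curvature operator. Sliding: for $t \ge 1$, hypothesis \eqref{allinfi2} at $k = 0$ gives $w_t > u$ on the support of $\varphi$ within $(0, 1)$, so $F_t$ locally contains $E$ near the origin; decreasing $t$, at some critical $t_\star \ge 1 - \eta^2$ (the lower bound enforced by the contradiction assumption, since $w_{t_\star}(\bar x_1) \ge u(\bar x_1)$ must hold) the local containment first fails at a touching point $q = (q_1, u(q_1))$. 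Choosing $\delta < 1 - \eta^2$ in \eqref{allinfi1} rules out touching at an exterior left point $q_1 < 0$, so $q_1 \in [0, 1]$.

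Local upper containment at $q$ and the $s$-minimality of $E$ (the vanishing of the principal-value integral $\int \tilde\chi_E(x)/|x-q|^{2+s}\,dx = 0$) yield
$$
c_0(1-\eta^2)\e \;\le\; c_0\, t_\star\, \e \;\le\; R := \int_{\R^2 \setminus B_\rho(q)} \frac{\tilde\chi_{F_{t_\star}}(x) - \tilde\chi_E(x)}{|x-q|^{2+s}}\,dx.
$$
The central step is to bound $|R|$. At annular scale $|x-q| \sim 2^k$ for $k \in \{0, \dots, k_0\}$, the symmetric difference $E \triangle F_{t_\star}$ has vertical width at most $|u(x_1) - \ell x_1| + |w_{t_\star}(x_1) - \ell x_1| \le C\,\e\,(2^k)^{1+\alpha}$ by the hypotheses, contributing $O(\e\,2^{k(\alpha-s)})$; the geometric series in $k$ converges precisely because $\alpha < s$. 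Beyond scale $2^{k_0} \asymp \e^{-1/\alpha}$, the hypotheses no longer apply, but the kernel alone integrates to $O(\e^{s/\alpha}) = o(\e)$ since $s > \alpha$. The contributions coming from the left exterior $(-2^k, 0)$ carry the extra smallness factor $\delta$ from \eqref{allinfi1}.

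Finally, parameters would be fixed in the order: $\eta$ universal so that $c_0(1-\eta^2) \ge c_0/2$; then $\delta$ small enough that the $\delta$-weighted left-side tail contributes at most $c_0(1-\eta^2)\e/4$ to $|R|$; then $\e_0$ small enough that the beyond-$k_0$ error contributes at most $c_0(1-\eta^2)\e/4$. This gives $|R| < c_0(1-\eta^2)\e$, contradicting the inequality above. The main obstacle is the barrier construction, namely producing a profile whose subgraph has a strict positive lower bound $c_0\,\e$ on the nonlocal mean curvature at all plausible touching points, which requires carefully analyzing the perturbation of the nonlocal curvature of a halfplane and choosing $\varphi$ correspondingly. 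The hypothesis $\alpha < s$ is essential: it is exactly the threshold at which the dyadic tail estimate becomes convergent, allowing the curvature gain of the barrier to dominate the tail error.
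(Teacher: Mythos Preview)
Your overall framework---contradiction, sliding barrier, dyadic tail estimate---is sound, and the tail computation matches the paper: the annular contributions are $O(\e\cdot 2^{k(\alpha-s)})$, summing geometrically since $\alpha<s$, and beyond scale $2^{k_0}\asymp\e^{-1/\alpha}$ the raw kernel contributes $O(\e^{s/\alpha})=o(\e)$.

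The substantive gap is the gain mechanism. You want the barrier's own nonlocal curvature to supply the term $c_0 t_\star\e$, and rightly flag the construction as ``the main obstacle''. For a smooth compactly supported bump $\varphi$, the linearized curvature of its subgraph is proportional to $-t\e\,(-\Delta)^{(1+s)/2}\varphi$, which changes sign on $\mathrm{supp}\,\varphi$; and as $t$ decreases the first touching is driven toward the edge of $\mathrm{supp}\,\varphi$ (where $\varphi\to0$ but $u-\ell x_1$ need not vanish), exactly where the sign is unfavorable. Your initial containment ``$w_t>u$ on $\mathrm{supp}\,\varphi$ for $t\ge1$'' already fails near that edge for the same reason. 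The paper's mechanism is different: it slides a \emph{parabola} of opening $\e$ vertically, and the gain comes not from the parabola's $O(\e)$ curvature but from the \emph{exterior mass}. Hypothesis~\eqref{allinfi1} pins $u$ within $\delta\e$ of $\ell x_1$ on $(-\eta,0)$, so once the parabola has dropped by $\asymp\e$ there is a region of measure $\asymp\e\eta$ in $E$ lying above the parabola, within distance $\asymp\eta$ of the touching point $Q$; this mass contributes $\asymp \e/\eta^{1+s}$ to the curvature integral with the correct sign. Taking $\eta$ small makes it dominate the $O(\e)$ tail. This exterior-mass mechanism is exactly why the boundary Harnack improves the oscillation on \emph{both} sides simultaneously (the exterior datum supplies the gain whether one slides from above or below), and it explains why your parameter choice ``$\eta$ universal so that $c_0(1-\eta^2)\ge c_0/2$'' does not close the argument against the fixed $O(\e)$ right-side tail.

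A secondary omission: you do not treat touching at the endpoint $x_1=0$. The paper rules this out (see the argument establishing~\eqref{Thsi}) via the obstacle-type regularity of~\cite{MR3532394}: a touching from below at $0$ forces a stickiness jump, so $\partial E$ is locally a $C^{1,\frac{1+s}{2}}$ graph in the \emph{horizontal} direction with vertical tangent, which is then shown incompatible with being touched by a barrier of bounded slope.
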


\begin{proof} We use the sliding method of Lemma~6.9
of~\cite{MR2675483}, by taking into account the following complications.
First of all, we do not impose a priori that the touching points
occur in the interior. Secondly, the sliding parabola
is adapted to take into consideration the linear perturbation provided
by the parameter~$\ell$. On the other hand,
dealing with these two complications
will produce an interesting byproduct since
the Harnack inequality that we obtain establishes both
the oscillation improvement from above in~\eqref{sotto0}
and the one from below in~\eqref{sotto} (while in the interior
case one can only prove either one or the other): in our
situation, this additional information will be a consequence
of the regularity of the data outside the domain,
which, in view of~\eqref{allinfi1} produces exterior mass of
both the set and of its complement.

The details go as follows. 
We prove that~\eqref{sotto} holds true
(the proof of~\eqref{sotto0} is similar).
We argue
by contradiction, supposing that~\eqref{sotto}
is violated, hence there exists~$t_\star\in(0,\eta)$
with
\begin{equation}\label{tstaso}
u(t_\star)-\ell t_\star< \e(\eta^2-1).
\end{equation}
We let
$${\mathcal{C}}:=\{ x=(x_1,x_2)\in\R^2 {\mbox{ s.t. }}
x_1\in (-\eta,\eta) {\mbox{ and }} |x_2-\ell x_1|<\e \}.$$
We prove that
\begin{equation}\label{sotto00}
|E\cap{\mathcal{C}}|\ge \frac{\e\eta}{2}.
\end{equation}
This will be established once we show that
\begin{equation}\label{87078870878-ha}
\left\{ x=(x_1,x_2)\in\R^2 {\mbox{ s.t. }}
x_1\in(-\eta,0) {\mbox{ and }}x_2\in \left(u(x_1)-\frac\e2,u(x_1)\right)\right\}
\subseteq  E\cap{\mathcal{C}}.\end{equation}
To prove this claim, let~$x$ be in the set
on the left hand side of~\eqref{87078870878-ha}.
Then, we have that~$x_2<u(x_1)$, and thus~$x\in E$. Moreover,
using~\eqref{allinfi1} with~$k:=0$,
$$ |x_2-\ell x_1|\le |x_2-u(x_1)|+|u(x_1)-\ell x_1|\le \frac\e2+\delta\e<\e,$$
as long as~$\delta<\frac12$, and consequently~$x\in{\mathcal{C}}$.
This proves~\eqref{87078870878-ha}, and hence~\eqref{sotto00}
readily follows.

Now, by varying the parameter~$\kappa\in\R$,
we slide by below a parabola~${\mathcal{P}}_\kappa$ of the form
$$ x_2=\ell x_1-\frac{\e (x_1-t_\star)^2}2-\kappa$$
till it touches~$u$ in~$\{|x_1|\le1\}$.

To formalize this idea, we first observe that, by~\eqref{allinfi1}
and~\eqref{allinfi2} (used here with~$k:=0$), if~$|x_1|\le1$,
\begin{eqnarray*}
\ell x_1-\frac{\e (x_1-t_\star)^2}2-\kappa-u(x_1)\le\e-\frac{\e (x_1-t_\star)^2}2-\kappa\le\e-\kappa
,\end{eqnarray*}
and so if~$\kappa\ge 1$, we have that~${\mathcal{P}}_\kappa$
lies below the graph of~$u$ in~$\{|x_1|\le1\}$.

Then, we can take~$\kappa$ as small as possible with this property.
We observe that, for this choice of~$\kappa$, 
\begin{equation}\label{INTps}
{\mbox{there must
be an interior touching point in $\{x_1\in(0,5\eta)\}$.}}\end{equation}
Indeed, by~\eqref{tstaso} and the fact that~$u$ lies above the parabola~${\mathcal{P}}_\kappa$,
$$ 0\ge \ell t_\star-\kappa-u(t_\star)>
-\kappa-
\e(\eta^2-1),
$$
which gives that
\begin{equation}\label{kasota}\kappa>\e (1- \eta^2 ).\end{equation}
Hence, if~$x_1\in(-1,0)$, by~\eqref{allinfi1} we have that
$$ \ell x_1-\frac{\e (x_1-t_\star)^2}2-\kappa-u(x_1)
<\delta\e-\frac{\e (x_1-t_\star)^2}2-\e (1-\eta^2)\le
-\e\left(1- \eta^2 -\delta\right)\le-\frac\e2,
$$
as long as~$\eta$ and~$\delta$ are sufficiently small,
which says that 
\begin{equation}\label{INTps2}
{\mbox{the parabola is well separated by~$u$
in~$\{x_1\in(-1,0)\}$.}}\end{equation}
Furthermore, 
if~$x_1\in [5\eta,1]$, we have that~$x_1-t_\star\ge 4\eta$
and then,
by~\eqref{allinfi2} and~\eqref{kasota},
\begin{eqnarray*}&&
\ell x_1-\frac{\e (x_1-t_\star)^2}2-\kappa-u(x_1)\le\e
-\frac{\e (x_1-t_\star)^2}2-\kappa<
\e-\frac{\e (x_1-t_\star)^2}2-\e(1-\eta^2)\\&&\qquad\le
\e-8\e \eta^2-\e(1-\eta^2)=-7\e\eta^2,
\end{eqnarray*}
hence
the parabola is well separated by~$u$
in~$\{x_1\in[5\eta,1]\}$.

{F}rom this and~\eqref{INTps2} we obtain that the parabola touches
the graph of~$u$ at some point in~$[0,5\eta)$.
To complete the proof of~\eqref{INTps}, we have to check that
\begin{equation}\label{Thsi}
{\mbox{this touching point cannot occur at~$x_1=0$.}}\end{equation}
We argue by contradiction, assuming that this is the case.
Then, recalling~\eqref{INTps2},
$$ -\frac{\e t_\star^2}2-\kappa=\liminf_{x_1\searrow0}u(x_1)<
\liminf_{x_1\nearrow0}u(x_1).$$
That is,
the $s$-minimal surface~$E$ would have a boundary
point~$P=(P_1,P_2):=\left(0,-\frac{\e t_\star^2}2-\kappa\right)$
such that~$B_\rho(P)\setminus\Omega\subset E$,
as long as~$\rho>0$ is small enough.

By this and Theorem~1.1 in~\cite{MR3532394},
it follows that, near~$P$, the boundary of~$E$ can be written
as a~$C^{1,\frac{1+s}2}$-graph in the horizontal direction,
namely
$$ \partial E\cap B_{\rho'}(P) =\{ x\in\R {\mbox{ s.t. }} x_1=\phi(x_2)\}\cap B_{\rho'}(P),$$
for some~$\rho'>0$ and~$\phi\in C^{1,\frac{1+s}2}(\R)$,
with~$\phi(P_2)=P_1=0$
and~$\phi'(P_2)=0$.
By construction, we have that~$u(\phi(x_2))=x_2$,
for~$x_2$ close to~$P_2$, and hence, the condition that the parabola~${\mathcal{P}}_\kappa$
lies below~$u$ (with equality at~$x_1=0$)
gives that
$$ x_2\ge \ell \phi(x_2)-\frac{\e (\phi(x_2)-t_\star)^2}2-\kappa,$$
as long as~$x_2$ is close enough to~$P_2$ (with equality at~$x_2=P_2$).
As a consequence,
\begin{eqnarray*}0&\ge&
\left(\ell\phi(x_2)-\frac{\e (\phi(x_2)-t_\star)^2}2-\kappa-x_2\right)-
\left(\ell\phi(P_2)-\frac{\e (\phi(P_2)-t_\star)^2}2-\kappa-P_2\right)\\&=&
\ell\phi(x_2)+\frac{\e }2\Big( t_\star^2-(\phi(x_2)-t_\star)^2\Big)+P_2 -x_2\\&=&
\ell\phi(x_2)+\frac{\e }2\Big( 2t_\star\phi(x_2)-\phi^2(x_2)\Big)+P_2 -x_2\\
&=& \left(\ell+\frac{\e }2\Big( 2t_\star-\phi(x_2)\Big)\right)\phi(x_2)+P_2 -x_2
\end{eqnarray*}
and thus
\begin{eqnarray*}
1\le \lim_{x_2\to P_2}
\left(\ell+\frac{\e }2\Big( 2t_\star-\phi(x_2)\Big)\right)\frac{\phi(x_2)}{x_2-P_2}
=\left(\ell+ \e  t_\star \right)\,\phi'(P_2)=0.
\end{eqnarray*}
This contradiction completes the proof of~\eqref{Thsi}, and hence of~\eqref{INTps},
as desired.

Then, by~\eqref{INTps}, there exists~$Q\in (\partial E)\cap\{ x_1\in(0,5\eta)\}$
which is touched from below by the parabola~${\mathcal{P}}_\kappa$.

We denote by~$\underline{\mathcal{P}}_\kappa$ the subgraph
of~${\mathcal{P}}_\kappa$, and we 
observe that most of~$\underline{\mathcal{P}}_\kappa$ lies outside~${\mathcal{C}}$.
More explicitly, we claim that
\begin{equation}\label{pkap}
|\underline{\mathcal{P}}_\kappa\cap {\mathcal{C}}|\le C(\ell)\,\e\eta^3.
\end{equation}
for some~$C(\ell)>0$. Indeed, if~$x$ belongs to~$\underline{\mathcal{P}}_\kappa\cap {\mathcal{C}}$
we have that~$|x_1|<\eta$, $|x_2-\ell x_1|<\e$ and
$$ x_2<\ell x_1-\frac{\e (x_1-t_\star)^2}2-\kappa.$$
Consequently, recalling~\eqref{kasota},
$$ x_2-\ell x_1<-\kappa
< -\e (1- \eta^2 )=-\e+\e \eta^2 .$$
These observations say that~$\underline{\mathcal{P}}_\kappa\cap {\mathcal{C}}$
is contained in the parallelogram
$$ \{ x\in\R^2 {\mbox{ s.t. }} x_1\in(-\eta,\eta) {\mbox{ and }}
x_2-\ell x_1\in(-\e,-\e+\e\eta^2)\},$$
which in turn implies~\eqref{pkap}.

Assuming~$\eta$ sufficiently small, we deduce from~\eqref{sotto00} and~\eqref{pkap} that
\begin{equation}\label{sotto0090}
\big|(E\cap{\mathcal{C}})\setminus
\underline{\mathcal{P}}_\kappa\big|\ge \frac{\e\eta}{4}.
\end{equation}
We also observe that
\begin{equation}\label{BAS}
{\mathcal{C}}\subseteq B_{C_0(\ell)\eta}(Q)
\end{equation}
for some~$C_0(\ell)>0$.
Indeed, if~$y\in{\mathcal{C}}$, we have that~$|y_1-Q_1|\le 6\eta$
and
$$|y_2-Q_2|\le |y_2-\ell y_1|+|\ell|\,|y_1-Q_1|+|\ell Q_1-Q_2|\le
\e+6|\ell|\,\eta+|\ell Q_1-u(Q_1)|\le
2\e+6|\ell|\,\eta,$$
where~\eqref{allinfi2} was used once again in the last step,
and accordingly~$|y-Q|\le C_0(\ell)\eta$, for some~$C_0(\ell)>0$,
as long as~$\e$ is small enough.

This proves~\eqref{BAS}. In particular, if~$\eta$ is sufficiently small,
we conclude that~${\mathcal{C}}\subseteq B_{1/2}(Q)$.
In addition, we observe that~$\underline{\mathcal{P}}_\kappa$ at~$Q$
has curvature~$-\frac{\e}{(1+\ell^2)^{\frac32}}+O(\e^2)$, and therefore
\begin{equation*}\begin{split}&
\int_{B_{1/2}(Q)}\frac{\chi_{\R^2\setminus E}(y)-\chi_{E}(y)}{|y-Q|^{2+s}}\,dy\\
=\;&
-\int_{B_{1/2}(Q)\cap\underline{\mathcal{P}}_\kappa}\frac{dy}{|y-Q|^{2+s}}
+
\int_{B_{1/2}(Q)\setminus \underline{\mathcal{P}}_\kappa}\frac{\chi_{\R^2\setminus E}(y)-\chi_{E}(y)}{|y-Q|^{2+s}}\,dy\\
=\;&
-\int_{B_{1/2}(Q)\cap\underline{\mathcal{P}}_\kappa}\frac{dy}{|y-Q|^{2+s}}
+
\int_{(E^c\cap B_{1/2}(Q))\setminus \underline{\mathcal{P}}_\kappa}
\frac{dy}{|y-Q|^{2+s}}
-\int_{(E\cap B_{1/2}(Q))\setminus \underline{\mathcal{P}}_\kappa}\frac{
dy}{|y-Q|^{2+s}}\\ \le\;&
C_1(\ell)\e
-\int_{(E\cap B_{1/2}(Q))\setminus \underline{\mathcal{P}}_\kappa}\frac{
dy}{|y-Q|^{2+s}}
\\ \le\;& C_1(\ell)\e
-\int_{(E\cap{\mathcal{C}})\setminus
\underline{\mathcal{P}}_\kappa}\frac{
dy}{|y-Q|^{2+s}},
\end{split}
\end{equation*}
for some~$C_1(\ell)>0$.
Therefore, using again~\eqref{BAS},
we conclude that
\begin{equation*}
\int_{B_{1/2}(Q)}\frac{\chi_{\R^2\setminus E}(y)-\chi_{E}(y)}{|y-Q|^{2+s}}\,dy\le
C_1(\ell)\e
-\frac{\big|(E\cap{\mathcal{C}})\setminus
\underline{\mathcal{P}}_\kappa\big|}{
(C_0(\ell)\eta)^{2+s}}.
\end{equation*}
This and~\eqref{sotto0090} give that
\begin{equation}\label{v4vvBAUksd844-2}
\int_{B_{1/2}(Q)}\frac{\chi_{\R^2\setminus E}(y)-\chi_{E}(y)}{|y-Q|^{2+s}}\,dy\le
-\frac{C_2(\ell)\,\e}{\eta^{1+s}}+
C_1(\ell)\e,
\end{equation}
for some~$C_2(\ell)>0$.

Furthermore, by~\eqref{allinfi1}
and~\eqref{allinfi2},
\begin{equation}\label{v4vvBAUksd844-3}
\int_{B_{2^{k_0-1}}(Q)\setminus
B_{1/2}(Q)}\frac{\chi_{\R^2\setminus E}(y)-\chi_{E}(y)}{|y-Q|^{2+s}}\,dy\le
C\e\int_{1/2}^{2^{k_0}} t^{\alpha-1-s}\,dt
\le \frac{C\e}{s-\alpha},
\end{equation}
for some~$C>0$.

In addition, by~\eqref{koche},
$$ \int_{\R^2\setminus B_{2^{k_0-1}}(Q)}\frac{\chi_{\R^2\setminus E}(y)
-\chi_{E}(y)}{|y-Q|^{2+s}}\,dy\le\frac{C'}{s\,(2^{k_0-1})^s}\le
C(\alpha,s)\,\e,
$$
for some~$C'$, $C(\alpha,s)>0$.
Combining this, \eqref{v4vvBAUksd844-2} and~\eqref{v4vvBAUksd844-3},
we find that
$$ \frac1\e\,\int_{\R^2}\frac{\chi_{\R^2\setminus E}(y)-\chi_{E}(y)}{|y-Q|^{2+s}}\,dy\le
-\frac{C_2(\ell)}{\eta^{1+s}}+
C_1(\ell)+\frac{C}{s-\alpha}+C(\alpha,s),
$$
which is strictly negative as long as~$\eta$ is sufficiently small.
This contradiction proves~\eqref{sotto}, as desired.
\end{proof}

As customary, Lemma~\ref{HARNA} can be iterated
(though a finite number of times for a fixed~$\e$).
In our setting, this iteration is somewhat more delicate
than in the interior case, since one has to treat
the data from outside differently from the solution from the inside
and obtain uniform estimates at the boundary.
The result that we obtain is therefore the following:

\begin{corollary}\label{HARICORSIVO}
Let~$\alpha\in(0,s)$, $\ell\in\R$, $u:\R\to\R$ and
$$ E:=\{ (x_1,x_2)\in\R^2 {\mbox{ s.t. }} x_2<u(x_1)\}.$$
Assume that~$E$ is locally $s$-minimal in~$(0,1)\times\R$.
Then, there exist $c_0$,
$\eta$, $\delta$ and~$\tilde\e_0\in(0,1)$,
depending only
on~$\alpha$, $\ell$, and~$s$,
such that if~$\e\in(0,\tilde\e_0)$ the following statement holds true.

Let
\begin{equation}\label{tilde know}
\tilde k_0:=\left\lceil \frac{-\log\e}{c_0}\right\rceil,\end{equation}
and suppose that
\begin{eqnarray}
\label{LASallinfi1}&&|u(x_1)-\ell x_1|\le \e^{\frac1{c_0}}\, |x_1|^{1+\alpha} {\mbox{ for all $x_1\in (-2^{\tilde k_0},0)$,}}
\\
\label{LASallinfi2}&&|u(x_1)-\ell x_1|\le \e (2^k)^{1+\alpha} {\mbox{ for all $x_1\in (0,2^{k})$,
for all $k\in\{0,\dots, \tilde k_0\}$.}}
\end{eqnarray}
Then, for every~$m\in\N$ such that
\begin{equation}\label{iwuur-alal}
1\le m\le c_0\,|\log\e|,
\end{equation}
we have that
\begin{eqnarray}\label{LASsotto0}
&& \sup_{x_1\in(-\eta^m,\eta^m)} |u(x_1)-\ell x_1|\le \e(1-\eta^2)^m.
\end{eqnarray}
In particular,
under assumptions~\eqref{LASallinfi1}
and~\eqref{LASallinfi2}, there exist~$\vartheta_1$, $\vartheta_2\in(0,1)$
such that, for any~$x_1\in(-\eta,\eta)$,
\begin{eqnarray}\label{11-LASsotto0}
|u(x_1)-\ell x_1|\le \e\max\{ x_1^{ \vartheta_1 },\,\e^{\vartheta_2}\}.
\end{eqnarray}
\end{corollary}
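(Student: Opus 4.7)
The plan is to iterate Lemma~\ref{HARNA} along the geometric sequence of scales $\eta^m$, exploiting the scale invariance of the $s$-perimeter. Fix $c_0\in(0,1)$ sufficiently small and take $\eta,\delta$ from Lemma~\ref{HARNA}, possibly decreasing $\eta$ further so that the exponent $\vartheta_1:=\log(1-\eta^2)/\log\eta$ lies in $(0,1)$. Set
$$\varepsilon_m:=\varepsilon(1-\eta^2)^m\quad\text{and}\quad \varepsilon^{(m)}:=\varepsilon_m\,\eta^{-m},$$
so that the smallness threshold $\varepsilon^{(m)}<\varepsilon_0$ required by Lemma~\ref{HARNA} translates precisely to the range~\eqref{iwuur-alal} of admissible $m$. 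I will prove~\eqref{LASsotto0} by induction on $m$, and then extract~\eqref{11-LASsotto0} as an algebraic consequence.

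The base case $m=0$ is immediate: on $(0,1)$ it is \eqref{LASallinfi2} with $k=0$, and on $(-1,0)$ it follows from~\eqref{LASallinfi1} combined with $\varepsilon^{1/c_0}\le\varepsilon$ (valid since $c_0<1$). For the inductive step, consider the rescaling $u^{(m)}(x):=\eta^{-m}u(\eta^m x)$, whose subgraph $\eta^{-m}E$ is locally $s$-minimal in $(0,\eta^{-m})\times\R$ by scaling, and apply Lemma~\ref{HARNA} to $u^{(m)}$ with the same slope $\ell$ and parameter $\varepsilon^{(m)}$. The conclusions~\eqref{sotto0}--\eqref{sotto} on $(0,\eta)$ for $u^{(m)}$ unscale to $|u(y)-\ell y|\le\varepsilon_{m+1}$ on $(0,\eta^{m+1})$, while on the negative side $(-\eta^{m+1},0)$, \eqref{LASallinfi1} gives directly $|u(y)-\ell y|\le\varepsilon^{1/c_0}\eta^{(m+1)(1+\alpha)}$, which is much smaller than $\varepsilon_{m+1}$ since $c_0$ is small, closing the induction.

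The main technical step is to verify the hypotheses~\eqref{allinfi1}--\eqref{allinfi2} of Lemma~\ref{HARNA} for $u^{(m)}$ at every dyadic scale $2^k$ with $k\in\{0,\dots,k_0^{(m)}\}$, where $k_0^{(m)}=\lceil-\log_2\varepsilon^{(m)}/\alpha\rceil$. For~\eqref{allinfi2}, unscaling reduces this to showing $|u(y)-\ell y|\le\varepsilon_m(2^k)^{1+\alpha}$ on $(0,2^k\eta^m)$: on $(0,\eta^m)$ this is the inductive hypothesis at level $m$, while on $(\eta^m,2^k\eta^m)$ it follows from~\eqref{LASallinfi2} together with the smallness $\eta^{m(1+\alpha)}\ll(1-\eta^2)^m$ valid for small $\eta$. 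For~\eqref{allinfi1}, one invokes the pointwise exterior estimate~\eqref{LASallinfi1} and reduces to $\varepsilon^{1/c_0-1}\eta^{m(1+\alpha)}\le\delta(1-\eta^2)^m$, which holds throughout~\eqref{iwuur-alal} once $c_0$ is small. One must also check that the maximal scale $2^{k_0^{(m)}}\eta^m$ stays within the domain of validity of~\eqref{LASallinfi1}--\eqref{LASallinfi2}, namely $\le 2^{\tilde k_0}$; this is ensured by the definitions of $k_0^{(m)}$ and $\tilde k_0$ once $c_0$ is fixed small.

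For the final estimate~\eqref{11-LASsotto0}, given $x_1\in(-\eta,\eta)$ let $m\ge 1$ be the unique integer with $\eta^{m+1}\le|x_1|<\eta^m$. If $m\le c_0|\log\varepsilon|$, then~\eqref{LASsotto0} and the identity $\eta^{\vartheta_1}=1-\eta^2$ yield $|u(x_1)-\ell x_1|\le\varepsilon_m\le(1-\eta^2)^{-1}\varepsilon|x_1|^{\vartheta_1}$, and the harmless constant is absorbed by slightly decreasing $\vartheta_1$. If instead $m$ exceeds this threshold, one applies~\eqref{LASsotto0} at the maximal level $m_\star:=\lfloor c_0|\log\varepsilon|\rfloor$ to obtain $|u(x_1)-\ell x_1|\le\varepsilon(1-\eta^2)^{m_\star}=\varepsilon^{1+\vartheta_2}$ with $\vartheta_2:=c_0|\log(1-\eta^2)|$, and both exponents lie in $(0,1)$ provided $c_0$ and $\eta$ are small enough. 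The main obstacle throughout is the delicate bookkeeping of the rescaled hypotheses at intermediate scales~$2^k\eta^m$, together with the calibration of $c_0$ so that the exterior estimate~\eqref{LASallinfi1} dominates the Harnack threshold while still permitting enough iterations to reach the depth $|x_1|\sim\varepsilon^{c_0|\log\eta|}$.
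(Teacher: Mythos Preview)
Your overall strategy—iterating Lemma~\ref{HARNA} along the geometric sequence of scales $\eta^m$ via the rescaling $u^{(m)}(x)=\eta^{-m}u(\eta^m x)$ with parameter $\e^{(m)}=\e(1-\eta^2)^m/\eta^m$—is exactly what the paper does, and your derivation of~\eqref{11-LASsotto0} from~\eqref{LASsotto0} is correct.

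However, there is a gap in your verification of hypothesis~\eqref{allinfi2} for $u^{(m)}$. You split $(0,2^k\eta^m)$ into $(0,\eta^m)$ and $(\eta^m,2^k\eta^m)$, using the inductive hypothesis only at level $m$ on the first piece and~\eqref{LASallinfi2} on the second. This works when $2^k\eta^m\ge 1$, but \emph{fails} when $1\le 2^k<\eta^{-m}$: for $y\in(\eta^m,1)$, assumption~\eqref{LASallinfi2} with $j=0$ only yields $|u(y)-\ell y|\le\e$, whereas the required bound is $\e_m(2^k)^{1+\alpha}=\e(1-\eta^2)^m(2^k)^{1+\alpha}$, which for large $m$ and small $k$ (say $k=1$) is much smaller than $\e$. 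The smallness $\eta^{m(1+\alpha)}\ll(1-\eta^2)^m$ that you invoke is precisely the ingredient for the \emph{large}-scale case $2^k\eta^m\ge 1$ and does nothing here.

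The fix, which the paper carries out explicitly, is to use the inductive hypothesis at \emph{every} previous level $0,1,\dots,m$, not just at level $m$: writing $\eta=2^{-M_0}$, for each $\tilde k\in\{0,\dots,mM_0\}$ one sets $m_{\tilde k}:=\lfloor m-\tilde k/M_0\rfloor$, observes $(0,2^{\tilde k}\eta^m)\subseteq(0,\eta^{m_{\tilde k}})$, applies~\eqref{LASsotto0} at level $m_{\tilde k}$ to get $|u(y)-\ell y|\le\e_{m_{\tilde k}}$, and then checks $\e_{m_{\tilde k}}\le\e_m(2^{\tilde k})^{1+\alpha}$ via the elementary inequality $(1-\eta^2)^{1/M_0}\,2^{1+\alpha}\ge 1$ for $M_0$ large. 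You do flag ``delicate bookkeeping at intermediate scales'' at the end, but the mechanism you describe for handling it is the wrong one.
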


\begin{proof} We have that~\eqref{11-LASsotto0} readily
follows from~\eqref{iwuur-alal}
and~\eqref{LASsotto0}.
Hence, we focus on the proof of~\eqref{LASsotto0}.

To this end, we observe that, if~$x_1\in(-\eta^m,0)$ we can
exploit~\eqref{LASallinfi1} and see that
$$ |u(x_1)-\ell x_1|\le \e^{\frac1{c_0}}\, \eta^{(1+\alpha)m}
=\e^{\frac1{c_0}}(1-\eta^2)^m\, \left(\frac{\eta^{1+\alpha} }{ 1-\eta^2}\right)^m
\le\e^{\frac1{c_0}}(1-\eta^2)^m,$$
which implies~\eqref{LASsotto0} in this case.

Consequently, to prove~\eqref{LASsotto0}, it is enough to consider
the case~$x_1\in(0,\eta^m)$.

Hence, we prove~\eqref{LASsotto0} when~$x_1\in(0,\eta^m)$ by induction over~$m\in\N$.
We let~$\eta$, $\delta$ and~$\e_0$ be as in
Lemma~\ref{HARNA} and we iterate such a result till
\begin{equation}\label{iwuur-alal2}
m\le\frac{\log\frac{\e_0}{\e}}{\log\frac{1-\eta^2}\eta}.
\end{equation}
We will also take~$\tilde\e_0$ in the statement of
Corollary~\ref{HARICORSIVO} to be equal to~$\e_0^2$.
In this way, for every~$\e\in(0,\tilde\e_0)$,
we have that
$$\log\frac{\e_0}{\e}=\frac12\,\log\frac{\tilde\e_0}{\e^2}
\ge\frac12\,\log\frac{1}{\e},$$
and therefore~\eqref{iwuur-alal} implies~\eqref{iwuur-alal2}
for a suitable choice of~$c_0$. 

For future convenience, we also set
\begin{equation}\label{CETANO}
\tilde C(\eta):=-\frac{\log(1-\eta^2)}{\log\frac{1-\eta^2}\eta}>0.
\end{equation}
The induction argument goes
as follows. First of all, up to taking~$\eta$ in Lemma~\ref{HARNA}
smaller, we can assume that~$\eta=2^{-M_0}$, for some~$M_0\in\N$,
$M_0\ge2$.
When~$m=1$, we have that~\eqref{LASsotto0}
follows from Lemma~\ref{HARNA}. 

We now suppose that~\eqref{LASsotto0} holds true
for all~$m\in\{ 1,\dots,m_0\}$, with
\begin{equation}\label{iwuur-alal3}
m_0\le\frac{\log\frac{\e_0}{\e}}{\log\frac{1-\eta^2}\eta},\end{equation}
and we want to prove that~\eqref{LASsotto0} holds true
for~$m=m_0+1$.

To this end, we define
$$ \tilde u(x_1):=\frac1{\eta^{m_0}}\,u(\eta^{m_0} x_1)\qquad{\mbox{ and }}\qquad
\tilde\e:=\frac{\e(1-\eta^2)^{m_0}}{\eta^{m_0}}.$$
We also use the notation
\begin{equation}\label{NOTAZXCA}
x_1:=\eta^{m_0}\tilde x_1\qquad{\mbox{
and }}k:=\tilde k-m_0M_0.\end{equation}
Then, we claim that
\begin{equation}\label{0-sted1}
|\tilde u(\tilde x_1)-\ell \tilde x_1|\le\delta\tilde\e (2^{\tilde k})^{1+\alpha} {\mbox{ 
for all $\tilde x_1\in (-2^{\tilde k},0)$,
for all $\tilde k\in\{0,\dots, \tilde k_0\}$.}}
\end{equation}
For this, we exploit~\eqref{LASallinfi1}
and obtain that
\begin{equation}\label{980982948wiwegy}
|\tilde u(\tilde x_1)-\ell \tilde x_1|=\frac1{\eta^{m_0}}
|u(x_1)-\ell x_1|\le\frac{\e^{\frac1{c_0}}\, |x_1|^{1+\alpha}}{{\eta^{m_0}}}=
\frac{\e^{\frac1{c_0}-1}\, \tilde\e\,|x_1|^{1+\alpha}}{{(1-\eta^2)^{m_0}}}.\end{equation}
On the other hand, from~\eqref{CETANO} and~\eqref{iwuur-alal3},
\begin{eqnarray*}&&
\log\Big( (1-\eta^2)^{m_0}\Big)=m_0\,\log(1-\eta^2)\ge
\frac{\log\frac{\e_0}{\e}\;\log(1-\eta^2)}{\log\frac{1-\eta^2}\eta}\\&&
\qquad\qquad=-\tilde C(\eta)\,\log\frac{\e_0}{\e}=
\log\frac{\e^{\tilde C(\eta)}}{\e_0^{\tilde C(\eta)}}\ge
\log\big(\e^{\tilde C(\eta)}\big),
\end{eqnarray*}
and therefore~$(1-\eta^2)^{m_0}\ge \e^{\tilde C(\eta)}$.
Then, by plugging this information into~\eqref{980982948wiwegy},
we conclude that
$$ |\tilde u(\tilde x_1)-\ell \tilde x_1|\le
\e^{\frac1{c_0}-1-\tilde C(\eta)}\, \tilde\e\,|x_1|^{1+\alpha}\le\delta
\tilde\e\,|x_1|^{1+\alpha},$$
provided that~$c_0$ is chosen small enough,
which gives~\eqref{0-sted1}, as desired.

Now we claim that
\begin{equation}\label{0-sted2}
|\tilde u(\tilde x_1)-\ell \tilde x_1|\le\tilde\e (2^{\tilde k})^{1+\alpha} {\mbox{ 
for all $\tilde x_1\in (0,2^{\tilde k})$,
for all $\tilde k\in\{0,\dots, \tilde k_0\}$.}}
\end{equation}
To check this, we first consider the case~$\tilde k\in\{0,\dots,m_0M_0\}$,
and we let
$$ m_{\tilde k}:=\left\lfloor m_0-\frac{\tilde k}{M_0}\right\rfloor\in\{0,\dots,m_0\}.$$
We point out that
$$ x_1=2^{-m_0 M_0}\tilde x_1\in (0,2^{-m_0 M_0+\tilde k})\subseteq
(0,2^{-M_0 m_{\tilde k}})=(0,\eta^{m_{\tilde k}}).
$$
Then, since we know by inductive assumption
that~\eqref{LASsotto0} is satisfied when~$m\le m_0$, we find that
\begin{equation}\label{PROVV92}
|\tilde u(\tilde x_1)-\ell \tilde x_1|=\frac1{\eta^{m_0}}\,| u( x_1)-\ell x_1|
\le\frac{\e(1-\eta^2)^{ m_{\tilde k} }}{\eta^{m_0}}.
\end{equation}
Now, when~$\tilde k=0$ we deduce from~\eqref{PROVV92} that
$$ |\tilde u(\tilde x_1)-\ell \tilde x_1|
\le\frac{\e(1-\eta^2)^{ m_0}}{\eta^{m_0}}=\tilde\e,
$$
which is~\eqref{0-sted2} in this case.

If instead~$\tilde k\in\{1,\dots,m_0M_0\}$, we infer from~\eqref{PROVV92}
that
\begin{equation}\label{bRadifatuenna}
|\tilde u(\tilde x_1)-\ell \tilde x_1|
\le \tilde\e(1-\eta^2)^{ m_{\tilde k}-m_0 }
=\tilde\e(2^{\tilde k})^{1+\alpha}\times
\frac{ (1-\eta^2)^{ m_{\tilde k}-m_0 } }{ 2^{\tilde k(1+\alpha)} }.\end{equation}
Moreover,
\[ (1-\eta^2)^{\frac1{M_0}} \;2^{1+\alpha} =
(1-2^{-2M_0})^{\frac1{M_0}} \;2^{1+\alpha}\ge 2^\alpha,
\]
as long as~$\eta$ is small enough (hence~$M_0$ large enough), and thus
\begin{eqnarray*}
&& \frac{ (1-\eta^2)^{ m_{\tilde k}-m_0 } }{ 2^{\tilde k(1+\alpha)} }
\le \frac{ (1-\eta^2)^{ -\frac{\tilde k}{M_0} -1} }{ 2^{\tilde k(1+\alpha)} }=
\frac{1}{1-\eta^2}\;\left(
\frac{1}{ (1-\eta^2)^{ \frac{1}{M_0} }\; 2^{1+\alpha}} \right)^{\tilde k}\\
&&\qquad\qquad\le\frac{1}{(1-\eta^2)\;2^{\alpha\tilde k}}\le
\frac{1}{(1-\eta^2)\;2^{\alpha}}\le 1,
\end{eqnarray*}
provided that~$\eta$ is small enough.

{F}rom this and~\eqref{bRadifatuenna}, we conclude that
$$ |\tilde u(\tilde x_1)-\ell \tilde x_1|\le\tilde\e(2^{\tilde k})^{1+\alpha}.$$
The latter estimate gives~\eqref{0-sted2} when~$\tilde k\in\{1,\dots,m_0M_0\}$.

Now, we focus on the proof of~\eqref{0-sted2} when~$\tilde k\in\{m_0M_0+1,\dots, \tilde k_0\}$.
In this case, we exploit~\eqref{LASallinfi2} with the
notation in~\eqref{NOTAZXCA},
observing that~$k\ge0$ and~$x_1\in(0,\eta^{m_0}2^{\tilde k})=(0,2^k)$,
and we conclude that
\begin{eqnarray*}&&
|\tilde u(\tilde x_1)-\ell \tilde x_1|=\frac1{\eta^{m_0}}\,| u( x_1)-\ell x_1|
\le \frac{\e (2^k)^{1+\alpha} }{\eta^{m_0}}\\&&\qquad=\tilde\e(2^{\tilde k})^{1+\alpha} \times
\frac{(2^{-m_0M_0})^{1+\alpha} }{(1-\eta^2)^{m_0}}=
\tilde\e(2^{\tilde k})^{1+\alpha} \times\left(
\frac{\eta^{1+\alpha} }{(1-\eta^2)}\right)^{m_0}\leq\tilde\e(2^{\tilde k})^{1+\alpha}.
\end{eqnarray*}
This gives~\eqref{0-sted2} when~$\tilde k\in\{m_0M_0+1,\dots, \tilde k_0\}$.
The proof of~\eqref{0-sted2} is therefore
complete.

Now, by~\eqref{0-sted1} and~\eqref{0-sted2}, we obtain that~\eqref{allinfi1}
and~\eqref{allinfi2} are satisfied, with~$\tilde u$ and~$\tilde \e$
instead of~$u$ and~$\e$.

Also, in view of~\eqref{iwuur-alal3},
\begin{equation*}
\tilde\e=\e\times\left(
\frac{1-\eta^2}{\eta}\right)^{m_0}\le
\e\times\left(
\frac{1-\eta^2}{\eta}\right)^{
\frac{\log\frac{\e_0}{\e}}{\log\frac{1-\eta^2}\eta}}=
\e\,\exp\left( \log\frac{\e_0}{\e}\right)=\e_0.
\end{equation*}
Then, to be in the position of using
Lemma~\ref{HARNA} with~$\tilde u$ and~$\tilde \e$
instead of~$u$ and~$\e$, 
it remains to check that if~$k_0$ is defined as in~\eqref{koche}
with~$\e$ replaced by~$\tilde \e$, one has that
such a~$k_0$ is less than or equal to the~$\tilde k_0$
defined in~\eqref{tilde know}. This is indeed the case, since
\begin{eqnarray*}&& k_0-\tilde k_0=
\left\lceil \frac{-\log_2\tilde\e}{\alpha}\right\rceil-
\left\lceil \frac{-\log\e}{c_0}\right\rceil\le
\frac{-\log_2\e-m_0 \log_2
\frac{1-\eta^2}{\eta}}{\alpha}-\frac{-\log\e}{c_0}+1<0,
\end{eqnarray*}
as long as~$c_0$ is chosen sufficiently small.

With this, we have that all the assumptions of
Lemma~\ref{HARNA} are fulfilled with~$\tilde u$ and~$\tilde \e$
replacing~$u$ and~$\e$. Hence we
obtain that, for every~$\tilde x_1\in(0,\eta^{m_0+1})$,
\begin{eqnarray*}&&
|u(x_1)-\ell x_1|=\eta^{m_0} \left|\tilde u\left(\frac{
x_1}{\eta^{m_0}}\right)-
\frac{\ell x_1}{\eta^{m_0}}\right|\le\eta^{m_0}
\sup_{\tilde x_1\in(0,\eta)}\left|\tilde u(\tilde x_1)-\ell \tilde x_1\right|\\
&&\qquad\le\eta^{m_0}\tilde\e(1-\eta^2)=
\e(1-\eta^2)^{m_0+1},
\end{eqnarray*}
and this completes the inductive step, thus finishing the proof of~\eqref{LASsotto0}.
\end{proof}

Combining Corollary~\ref{HARICORSIVO} with
the interior Harnack estimates in Lemma~6.9 of~\cite{MR2675483}
we obtain the following global oscillation result:

\begin{corollary}\label{HARICORSIVO2}
Let~$\e\in(0,1)$, $\alpha\in(0,s)$, $\ell\in\R$, $u:\R\to\R$ and
$$ E:=\{ (x_1,x_2)\in\R^2 {\mbox{ s.t. }} x_2<u(x_1)\}.$$
Let also
\begin{equation*}
\tilde k_0:=\left\lceil \frac{-\log\e}{c_0}\right\rceil,\end{equation*}
Assume that~$E$ is locally $s$-minimal in~$(0,2^{\tilde k_0})\times\R$, and
\begin{eqnarray*}&&|u(x_1)-\ell x_1|\le \e^{\frac1{c_0}}\, |x_1|^{1+\alpha} {\mbox{ for all $x_1\in (-2^{\tilde k_0},0)$,}}
\\&&|u(x_1)-\ell x_1|\le \e (2^k)^{1+\alpha} {\mbox{ for all $x_1\in (0,2^{k})$,
for all $k\in\{0,\dots, \tilde k_0\}$.}}
\end{eqnarray*}
Then, there exist $\tilde\e_0$, $\vartheta\in(0,1)$,
depending only
on~$\alpha$, $\ell$, and~$s$,
and~$\phi_\e:[-2^{\tilde k_0-1},2^{\tilde k_0+1}]\to\R$ which is locally
H\"older continuous
with H\"older norm bounded uniformly in~$\e$ 
in any interval of the form~$[-2^{\tilde k_0-1},2^{\tilde k_0+1}]\setminus
(-\delta,\delta)$, for any given~$\delta>0$,
with~$|\phi_\e(x_1)|\le |x_1|^\vartheta$ for all~$x_1\in(-1,1)$,
and such that, if~$\e\in(0,\tilde\e_0)$,
$$ \e\big(\phi_\e(x_1)-\e^\vartheta\big)\le u(x_1)-\ell x_1\le \e\big(\phi_\e(x_1)+\e^\vartheta\big),$$
for all~$x_1\in[-2^{\tilde k_0-1},2^{\tilde k_0+1}]$.
\end{corollary}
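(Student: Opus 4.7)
The plan is to construct $\phi_\e$ as a spliced version of the ratio $(u(x_1)-\ell x_1)/\e$: near the boundary point $x_1=0$, Corollary~\ref{HARICORSIVO} already controls this ratio by $|x_1|^{\vartheta_1}$ up to an $\e^{\vartheta_2}$ error, while for $x_1$ bounded away from $0$ the interior Harnack inequality (Lemma~6.9 in~\cite{MR2675483}) applied at dyadic scales supplies a H\"older modulus uniform in $\e$. Setting $\vartheta:=\min\{\vartheta_1,\vartheta_2\}$ and $r_\e:=\e^{\vartheta_2/\vartheta_1}$, Corollary~\ref{HARICORSIVO} directly gives $|u(x_1)-\ell x_1|\le \e|x_1|^{\vartheta}$ for $r_\e\le|x_1|<\eta$ and $|u(x_1)-\ell x_1|\le\e^{1+\vartheta_2}\le\e^{1+\vartheta}$ for $|x_1|<r_\e$, which is precisely what is needed in the boundary region.

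For points away from $0$, I would rescale: given $x_0$ with $r:=|x_0|\in[r_\e,2^{\tilde k_0+1}]$, set
$$\tilde u(y):=\frac{u(x_0+r y)-\ell(x_0+r y)}{\e\, r^{1+\alpha}}\qquad\text{for }y\in\R.$$
The subgraph of $\tilde u$ is $s$-minimal in a neighbourhood of $B_{1/2}\subset\R^2$, and the dyadic assumptions~\eqref{LASallinfi1}--\eqref{LASallinfi2} translate into a tail bound of the form $|\tilde u(y)|\le C(1+|y|)^{1+\alpha}$, which is precisely what the interior Harnack lemma of~\cite{MR2675483} requires. Iterating the consequent oscillation-decay conclusion at dyadic scales inside the unit ball produces a uniform H\"older exponent $\vartheta\in(0,1)$ (which can be taken to coincide with the one chosen above) and an estimate $\|\tilde u\|_{C^{0,\vartheta}(B_{1/4})}\le C$ independent of $\e$. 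Unwinding the scaling gives that $(u-\ell x_1)/\e$ is $C^{0,\vartheta}$ on each annulus $\{r\le|x_1|\le 2r\}$ with norm independent of $\e$, and chaining such annuli yields the corresponding uniform bound on any set $[-2^{\tilde k_0-1},2^{\tilde k_0+1}]\setminus(-\delta,\delta)$ as soon as $\e$ is small enough that $r_\e<\delta/2$.

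To conclude, I would pick a smooth cutoff $\chi_\e$ that vanishes on $[-r_\e,r_\e]$ and equals $1$ outside $[-2r_\e,2r_\e]$, and define
$$\phi_\e(x_1):=\chi_\e(x_1)\,\frac{u(x_1)-\ell x_1}{\e}.$$
On $\{|x_1|\ge 2r_\e\}$ this coincides with $(u-\ell x_1)/\e$, so the two-sided approximation holds without error and the bound $|\phi_\e(x_1)|\le|x_1|^{\vartheta}$ on $(-1,1)$ follows from the first paragraph. On $\{|x_1|<r_\e\}$ the cutoff vanishes, the bound $|\phi_\e|\le|x_1|^{\vartheta}$ is trivial, and the approximation error is at most $\e^{1+\vartheta}$ by the boundary estimate. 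The transition region is handled by convex interpolation, again using the boundary estimate to verify both properties simultaneously. For any fixed $\delta>0$ and every sufficiently small $\e$, $\chi_\e\equiv 1$ on $[-2^{\tilde k_0-1},2^{\tilde k_0+1}]\setminus(-\delta,\delta)$, so the uniform H\"older bound from the second step passes directly to $\phi_\e$ there.

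The main obstacle is the second step: one must verify carefully that, after rescaling, the two-sided dyadic assumptions produce the precise tail integrability required by Lemma~6.9 in~\cite{MR2675483}, accounting for the different decay rates on the two sides of the origin (the exterior data on the left enjoys the stronger bound $\e^{1/c_0}|x_1|^{1+\alpha}$, which is more favourable but must still be incorporated into the scaled tail contribution). Once this is granted, the $\e$-independence of the H\"older norm is automatic, since all constants in the interior Harnack depend only on $\alpha$, $\ell$ and $s$, and the patching in the third step is purely a matter of bookkeeping.
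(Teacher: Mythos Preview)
Your proposal is correct and follows exactly the route the paper indicates: the paper does not give a proof of this corollary at all, but simply states that it is obtained ``combining Corollary~\ref{HARICORSIVO} with the interior Harnack estimates in Lemma~6.9 of~\cite{MR2675483}'', which is precisely your boundary-plus-interior splicing. Your construction of $\phi_\e$ via a cutoff at scale $r_\e=\e^{\vartheta_2/\vartheta_1}$ is a clean way to make the splicing explicit, and the concern you flag about checking the rescaled tail hypotheses for the interior Harnack is the only point requiring care, exactly as you say.
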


\section{Vertical rescalings}\label{HAR2}

In this section, we will consider vertical rescaling of a nonlocal
minimal graph. In combination with the global Harnack result
in Corollary~\ref{HARICORSIVO2}, we will obtain suitable H\"older
estimates (up to a negligible scale) which allow us to deduce
convergence up to the boundary. Interestingly, the limit
function will be $\sigma$-harmonic in~$(0,+\infty)$,
with~$\sigma:=\frac{1+s}{2}$. The precise result
will be given in the forthcoming Lemma~\ref{-02390-23409}.
To this end, we first present an auxiliary result about
the limit equation.

\begin{lemma}\label{-02390-23409-0}
Let~$\ell\in\R$ and~$u:\R\to\R$. Let
$$ E:=\{ (x_1,x_2)\in\R^2 {\mbox{ s.t. }} x_2<u(x_1)\}$$
and, for any~$\e\in(0,1)$,
\begin{equation}\label{ueti} u_\e(x_1):= \frac{u(x_1)-\ell x_1}{\e}.\end{equation}
Assume that~$E$ is locally $s$-minimal in~$(0,1)\times\R$,
and that
\begin{equation}\label{CO:a}
|u_\e(x_1)|\le C\,(1+|x_1|^{1+\alpha}),\end{equation} for some~$C>0$
and~$\alpha\in(0,s)$, and
that~$u_\e$ converges to~$\bar u$ locally uniformly in~$\R$.
Then~$(-\Delta)^{\frac{1+s}{2}}\bar u=0$ in~$(0,+\infty)$.
\end{lemma}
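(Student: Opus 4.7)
The plan is to pass to the limit as $\e\to 0$ in the pointwise nonlocal curvature equation satisfied by $E$ at each interior boundary point, after a change of variable that turns the mean curvature operator into a perturbation of the fractional Laplacian of order $\frac{1+s}2$.

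First, since $E$ is $s$-minimal and $u$ is sufficiently regular in the interior, at any $x_0$ inside the minimality slab one has the pointwise Euler--Lagrange relation at $(x_0,u(x_0))$. Parametrizing the graph and integrating out the vertical variable rewrites this as
\[
\int_{\R} g\bigl(u(x_0+z)-u(x_0),z\bigr)\,dz=0,
\]
where $g(\delta,z):=\int_{0}^{\delta}\frac{2\,d\tau}{(z^2+\tau^2)^{(2+s)/2}}$ is odd in $\delta$. Substituting $u(x)=\ell x+\e\,u_{\e}(x)$ and setting $\psi(z):=u_{\e}(x_0+z)-u_{\e}(x_0)$, a direct scaling computation shows that $g(\ell z,z)$ is an odd function of $z$, so that $\mathrm{PV}\!\int g(\ell z,z)\,dz=0$. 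Dividing the minimality equation by $\e$ and applying the mean value theorem in $\delta$ produces
\[
\int_{\R}\psi(z)\,\partial_{\delta}g(\mu_{\e}(z),z)\,dz=0,
\qquad \mu_{\e}(z)\in[\ell z,\,\ell z+\e\psi(z)],
\]
with $\partial_{\delta}g(\ell z,z)=\frac{2}{(1+\ell^2)^{(2+s)/2}\,|z|^{2+s}}$.

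Next I would pass to the limit in this identity via dominated convergence. The universal envelope $|\partial_{\delta}g(\mu,z)|\le \frac{2}{|z|^{2+s}}$ bounds the integrand by $\frac{2|\psi(z)|}{|z|^{2+s}}$. The growth bound~\eqref{CO:a} with $\alpha<s$ makes this envelope integrable at infinity (tail exponent $\alpha-1-s<-1$), while interior $C^{1,\alpha'}$ regularity of $u_{\e}$, obtained from the global oscillation estimate in Corollary~\ref{HARICORSIVO2} together with the interior regularity theory for $s$-minimal graphs, supplies the control needed near $z=0$ to interpret the integral as a principal value. Since $\mu_{\e}(z)\to \ell z$ pointwise and $u_{\e}\to\bar u$ locally uniformly, one obtains
\[
\frac{2}{(1+\ell^2)^{(2+s)/2}}\,\mathrm{PV}\!\int_{\R}\frac{\bar u(x_0+z)-\bar u(x_0)}{|z|^{2+s}}\,dz=0,
\]
which, up to a positive multiplicative constant, is the equation $(-\Delta)^{(1+s)/2}\bar u(x_0)=0$. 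This holds at every $x_0$ in the minimality slab and, when applied to blow-ups for which the slab covers the whole half-line $\{x_1>0\}$, the identity extends to all $x_0\in(0,+\infty)$.

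The main obstacle is the tail control of the remainder. A naive second-order Taylor expansion in $\delta$ leaves an error of order $\e\,\psi(z)^2/|z|^{3+s}$, whose integrability at infinity would force the more restrictive hypothesis $\alpha<s/2$. The correct move is not to expand $\partial_{\delta}g$ beyond the mean value form: one keeps $\partial_{\delta}g(\mu_{\e}(z),z)$ inside the integral, bounds it by the universal envelope $2/|z|^{2+s}$, and relies on the threshold assumption $\alpha<s$ (together with interior regularity) to produce an integrable majorant. This is precisely where the hypothesis $\alpha<s$ in~\eqref{CO:a} enters decisively.
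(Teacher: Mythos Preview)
Your strategy---linearize the nonlocal curvature equation around the slope $\ell$ and pass to the limit---is the same as the paper's, and your treatment of the tail (using $|\partial_\delta g|\le 2/|z|^{2+s}$ together with $\alpha<s$) is correct. The gap is in the local part, near $z=0$.

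You apply dominated convergence to
\[
\int_{\R}\psi_\e(z)\,\partial_\delta g(\mu_\e(z),z)\,dz,\qquad \psi_\e(z)=u_\e(x_0+z)-u_\e(x_0),
\]
with majorant $2|\psi_\e(z)|/|z|^{2+s}$. Near the origin this is not integrable: even a uniform $C^1$ bound on $u_\e$ only gives $|\psi_\e(z)|\le C|z|$, hence a majorant $\sim|z|^{-1-s}$. The integral is a genuine principal value, and you would need uniform-in-$\e$ $C^{1,\alpha'}$ estimates with $\alpha'>s$ (so that $|\psi_\e(z)-\psi_\e'(0)z|\le C|z|^{1+\alpha'}$ yields an integrable envelope) to push the argument through. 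Corollary~\ref{HARICORSIVO2} does not give this: it only produces a uniform H\"older modulus for $u_\e$, not $C^{1,\alpha'}$ control. And even if you extract such bounds from interior regularity, you still need to know that the limiting principal value
\[
\mathrm{PV}\!\int_{\R}\frac{\bar u(x_0+z)-\bar u(x_0)}{|z|^{2+s}}\,dz
\]
makes sense, which presupposes $\bar u\in C^{1,\alpha'}$ at $x_0$ with $\alpha'>s$---regularity that is not yet available at this point in the argument.

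The paper avoids this by proving the equation in the viscosity sense: one touches $\bar u$ from below at $p$ by a smooth $\psi$, penalizes at infinity by $-\gamma(1+|x_1-p|^2)^{(1+\alpha')/2}$ to produce a global touching point $p_\e$ for $u_\e$, and then writes the curvature inequality at $(p_\e,u(p_\e))$ in terms of the \emph{test function} rather than $u_\e$. The key quantity (cf.~\eqref{8iJA-x09w}--\eqref{7uSKKA}) is bounded by $\sup|F'|\cdot|\phi(p_\e-t)-\phi(p_\e)+\phi'(p_\e)t|/|t|$, which for small $|t|$ is $\le C\sup|\phi''|\,|t|$, giving an integrable majorant $C|t|^{-s}$ independently of $\e$. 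Dominated convergence then applies cleanly, and the limiting inequality involves only the smooth $\phi$. This transfers the regularity burden from $\bar u$ to the test function and is why the viscosity formulation is the right vehicle here.
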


\begin{proof} When~$\ell=0$, one can exploit the proof of Lemma~6.11
in~\cite{MR2675483}.
For the general case, we argue as follows. Let~$p\in(0,1)$
and~$\psi$ be a smooth function touching~$\bar u$
from below at~$p$. We can also suppose that~$\psi$
satisfies~\eqref{CO:a}.
Let also~$\gamma>0$, $\alpha'\in(\alpha,s)$, and
\begin{equation}\label{7GAdefoa}
\phi(x_1):=\psi(x_1)-\gamma\,(1+|x_1-p|^2)^{\frac{1+\alpha'}2},\end{equation}
with~$C$ as in~\eqref{CO:a}. We observe that
\begin{equation}\label{cooall}\begin{split}& \lim_{|x_1|\to+\infty} u_\e(x_1)-\phi(x_1)
=\lim_{|x_1|\to+\infty} u_\e(x_1)-\psi(x_1)+\gamma\,(1+|x_1-
p|^2)^{\frac{1+\alpha'}2}\\&\qquad\ge
\lim_{|x_1|\to+\infty} -2C|x_1|^{1+\alpha}+\gamma\,(1+|x_1-p|^2)^{\frac{1+\alpha'}2}=+\infty.\end{split}\end{equation}
Hence, we can take~$p_\e\in\R$ such that
$$ \inf_{x_1\in\R} u_\e(x_1)-\phi(x_1)=
u_\e(p_\e)-\phi(p_\e),$$
and thus a vertical translation of
the function~$\phi$ touches~$ u_\e$
from below at the point~$p_\e$.

As a consequence, by~\eqref{ueti}, the
function~$x_1\mapsto\e\phi(x_1)+
\ell x_1=:\zeta_\e(x_1)$ touches, up to a vertical translation,
the function~$ u$
from below at~$p_\e$. 

We observe that
\begin{equation}\label{pep}
\lim_{\e\searrow0} p_\e=p.
\end{equation}
Indeed, suppose not. Then, by~\eqref{cooall}, we know that~$p_\e$
is bounded and thus, up to a subsequence, we can suppose that~$p_\e\to \tilde p\ne p$
as~$\e\searrow0$. By construction,
\begin{eqnarray*}&& u_\e(p_\e)-\psi(p_\e)
+\gamma\,(1+|p_\e-p|^2)^{\frac{1+\alpha'}2}=
u_\e(p_\e)-\phi(p_\e)\\&&\qquad\le
u_\e(p)-\phi(p)=u_\e(p)-\psi(p)+\gamma.\end{eqnarray*}
Hence, taking the limit as~$\e\searrow0$,
$$ \bar u(\tilde p)-\psi(\tilde p)
+\gamma\,(1+|\tilde p-p|^2)^{\frac{1+\alpha'}2}\le
\bar u(p)-\psi(p)+\gamma=\gamma\le\bar u(\tilde p)-\psi(\tilde p)+\gamma.$$
This is a contradiction, that proves~\eqref{pep}.

In particular,
for small values of~$\e$, we may suppose that~$p_\e\in(0,1)$,
and then the minimality of~$E$ gives that
\begin{equation}\label{uJAM}
0\le
\int_{\R^2} \frac{\chi_{\R^n\setminus \Pi_\e}(y)-\chi_{\Pi_\e}(y)}{|y-P_\e|^{2+s}}\,dy,
\end{equation}
where~$P_\e:=(p_\e,u(p_\e))$ and~$\Pi_\e:=\{x_2<\zeta_\e(x_1)\}$.

Using formula~(49) in~\cite{MR3331523}, we can write~\eqref{uJAM}
in the form
\begin{equation}\label{8JA-9wsjAJ}
\begin{split}
0\,&\ge
\int_{\R} F\left(\frac{\zeta_\e(p_\e-t)-\zeta_\e(p_\e)}{|t|}
\right)\,\frac{dt}{|t|^{1+s}}\\&=
\int_{\R} F\left(\frac{
\e\phi(p_\e-t)-\e\phi(p_\e)-
\ell t}{|t|}
\right)\,\frac{dt}{|t|^{1+s}},
\end{split}
\end{equation}
with
\begin{equation}\label{DEF:F} F(r):=\int_0^r\frac{d\tau}{(1+\tau^2)^{\frac{2+s}2}}.\end{equation}
Since~$F$ is odd, we have that
$$ \int_{\R} F\left(\frac{-(\e\phi'(p_\e)+\ell )t}{|t|}
\right)\,\frac{dt}{|t|^{1+s}}=0.$$
Hence, setting
$$ G_\e(t):=\frac1\e\left[
F\left(\frac{
\e\phi(p_\e-t)-\e\phi(p_\e)-
\ell t}{|t|}
\right)-F\left(\frac{-(\e\phi'(p_\e)+\ell) t}{|t|}
\right)\right],$$
we obtain from~\eqref{8JA-9wsjAJ} that
\begin{equation}\label{71234567987654}
0\ge
\int_{\R} G_\e(t)\,\frac{dt}{|t|^{1+s}}.
\end{equation}
We also remark that
\begin{equation} \label{8iJA-x09w}
|G_\e(t)|\le\sup_{r\in\R}|F'(r)|\,
\frac{|\phi(p_\e-t)-\phi(p_\e)+\phi'(p_\e)t|}{|t|}.
\end{equation}
Hence, if~$d\in(0,\,|p|/2)$,
\begin{equation} \label{7uSKKA}\begin{split}&\sup_{|t|\le d/4}
|G_\e(t)|\le\sup_{r\in\R}|F'(r)|\,\sup_{|t|\le d/4}
\frac{|\phi(p_\e-t)-\phi(p_\e)+\phi'(p_\e)t|}{|t|}\\&\qquad\qquad\le\sup_{r\in\R}|F'(r)|
\sup_{q\in(p-d,p+d)}|\phi''(q)|\;|t|.\end{split}
\end{equation}
On the other hand,
$$ |\phi(p_\e-t)|\le \sup_{q\in(p-d,p+d)}|\phi(q)|+C(1+|t|^{1+\alpha'}),$$
for some~$C>0$.

As a consequence, recalling~\eqref{8iJA-x09w}, if~$|t|>d/4$,
\begin{eqnarray*}
|G_\e(t)|&\le&\sup_{r\in\R}|F'(r)|\,
\frac{1}{|t|}\left(
\sup_{q\in(p-d,p+d)}|\phi(q)|+C(1+|t|^{1+\alpha'})\right)\\&\le&
\sup_{r\in\R}|F'(r)|\,
\left( \frac4d\left(
\sup_{q\in(p-d,p+d)}|\phi(q)|+C\right)+C |t|^{\alpha'}\right).
\end{eqnarray*}
Using this and~\eqref{7uSKKA}, we can exploit the Dominated Convergence Theorem
in~\eqref{71234567987654}, and thus conclude that
\begin{eqnarray*}
0&\ge& \int_{\R}\lim_{\e\searrow0} G_\e(t)\,\frac{dt}{|t|^{1+s}}\\
&=& \int_{\R}\lim_{\e\searrow0} 
F'\left(\frac{
\e\phi(p_\e-t)-\e\phi(p_\e)-
\ell t}{|t|}
\right)\,
\frac{
\phi(p_\e-t)-\phi(p_\e)}{|t|}\,\frac{dt}{|t|^{1+s}}\\
&=& \int_{\R}
F'\left(\frac{-\ell t}{|t|}
\right)\,
\frac{
\phi(p-t)-\phi(p)}{|t|}\,\frac{dt}{|t|^{1+s}}\\
&=& \frac{1}{(1+\ell^2)^{\frac{n+s}2}}
\,\int_{\R}
\Big(\phi(p-t)-\phi(p)\Big)\,\frac{dt}{|t|^{2+s}}.
\end{eqnarray*}
Then, recalling~\eqref{7GAdefoa} and the fact that~$\alpha'<s$,
we can send~$\gamma\searrow0$ and find that
$$ 0\ge
\,\int_{\R}
\Big(\psi(p-t)-\psi(p)\Big)\,\frac{dt}{|t|^{2+s}}.$$
Since~$\psi$ is an arbitrary function touching from below,
we thus obtain that~$-(-\Delta)^{\frac{1+s}{2}}\bar u(p)\le0$.
The other inequality can be proved in a similar way.
\end{proof}

We now obtain a limit equation for the vertical rescaling,
together with its asymptotics,
as stated in the following result:

\begin{lemma}\label{-02390-23409}
Let~$\alpha\in(0,s)$, $\ell\in\R$ and~$u:\R\to\R$. Let
$$ E:=\{ (x_1,x_2)\in\R^2 {\mbox{ s.t. }} x_2<u(x_1)\}$$
and, for any~$\e\in(0,1)$,
\begin{equation}\label{XCAiqw} u_\e(x_1):= \frac{u(x_1)-\ell x_1}{\e}.\end{equation}
There exists~$c_0\in(0,1)$, only depending on~$\ell$, $\alpha$ and~$s$, such that the following statement
holds true. Assume that~$E$ is locally $s$-minimal in~$(0,2^{\tilde k_0})\times\R$,
with
\begin{equation*}
\tilde k_0:=\left\lceil \frac{-\log\e}{c_0}\right\rceil,\end{equation*}
and that
\begin{eqnarray*}&&|u(x_1)-\ell x_1|\le \e^{\frac1{c_0}}\, |x_1|^{1+\alpha} {\mbox{ for all $x_1\in (-2^{\tilde k_0},0)$,}}
\\&&|u(x_1)-\ell x_1|\le \e (2^k)^{1+\alpha} {\mbox{ for all $x_1\in (0,2^{k})$,
for all $k\in\{0,\dots, \tilde k_0\}$.}}
\end{eqnarray*}
Then, up to a subsequence, we have that~$u_\e$ converges locally uniformly in~$\R$
to a function~$\bar u:\R\to\R$ satisfying~$|\bar u(x_1)|\le C(1+|x_1|)^{1+\alpha}$
for all~$x_1\in\R$, and~$(-\Delta)^{\frac{1+s}{2}}\bar u=0$ in~$(0,+\infty)$.

Furthermore,
as~$x_1\searrow0$,
\begin{equation}\label{eqCAfddav} \bar u(x_1)= \bar a x_1^{\frac{1+s}{2}} +O(|x_1|^{{\frac{3+s}{2}}}),\end{equation}
for some~$\bar a\in\R$.
\end{lemma}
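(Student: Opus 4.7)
The plan is to establish the conclusions in three stages: subsequential compactness of the~$u_\e$, identification of the limit equation via Lemma~\ref{-02390-23409-0}, and the sharp boundary asymptotic.

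First, I would invoke Corollary~\ref{HARICORSIVO2}, whose hypotheses match the present ones. It provides functions~$\phi_\e$ with~$|u_\e-\phi_\e|\le\e^\vartheta$ and whose H\"older norm is bounded uniformly in~$\e$ on every subset of~$[-2^{\tilde k_0-1},2^{\tilde k_0+1}]$ bounded away from the origin. Hence on each fixed compact~$K\subset\R\setminus\{0\}$, the family~$\{u_\e\}$ is equibounded and equi-H\"older for all sufficiently small~$\e$, so the Ascoli--Arzel\`a theorem extracts a locally uniformly convergent subsequence on~$\R\setminus\{0\}$. Near the origin, assumption~\eqref{LASallinfi1} yields~$|u_\e(x_1)|\le\e^{1/c_0-1}|x_1|^{1+\alpha}\to 0$ for~$x_1<0$, while the bound~$|\phi_\e(x_1)|\le|x_1|^\vartheta$ from Corollary~\ref{HARICORSIVO2} gives~$|u_\e(x_1)|\le|x_1|^\vartheta+\e^\vartheta$ for~$x_1\in(0,1)$. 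These two bounds force~$u_\e\to 0$ uniformly in a neighborhood of~$0$, so the convergence upgrades to local uniform convergence on all of~$\R$, with limit~$\bar u$ satisfying~$\bar u(0)=0$ and~$\bar u\equiv 0$ on~$(-\infty,0]$. The global growth bound~$|\bar u(x_1)|\le C(1+|x_1|)^{1+\alpha}$ is inherited from~\eqref{LASallinfi2} by dyadic interpolation: given~$x_1>0$ one picks the unique~$k$ with~$2^{k-1}\le x_1<2^k$ and applies the inequality; the negative side is controlled analogously and in fact vanishes in the limit.

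Second, with the convergence in place and the polynomial growth verified, I would apply Lemma~\ref{-02390-23409-0} directly to conclude~$(-\Delta)^{(1+s)/2}\bar u=0$ in~$(0,+\infty)$: all the hypotheses of that lemma are satisfied by construction, with the same parameters~$\ell$ and~$\alpha$, and with~$E$ locally~$s$-minimal on a cylinder that exhausts~$(0,+\infty)\times\R$ as~$\e\searrow 0$.

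Third, setting~$\sigma:=(1+s)/2$, I would derive the expansion~\eqref{eqCAfddav} from the boundary regularity theory for the fractional Laplacian on a half-line. Since~$\bar u$ is~$\sigma$-harmonic on~$(0,+\infty)$, vanishes on~$(-\infty,0]$, and satisfies~$|\bar u(x_1)|\le C(1+|x_1|)^{1+\alpha}$ with~$\alpha<s$, the standard Ros-Oton--Serra type one-dimensional boundary expansions (to be collected in Appendix~\ref{LIN}) show that~$x_1^{-\sigma}\bar u(x_1)$ extends up to~$x_1=0$ with enough regularity to yield a leading term~$\bar a x_1^\sigma$ and a remainder of order~$x_1^{\sigma+1}$; since~$\sigma+1=(3+s)/2$, this is the exponent in the statement. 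The main obstacle is precisely the sharpness of this remainder, rather than the softer conclusion~$\bar u(x_1)=\bar a x_1^\sigma+o(x_1^\sigma)$. The natural route is a Liouville/blow-up scheme: subtract~$\bar a x_1^\sigma$ and rescale by~$\lambda^{-(\sigma+1)}$ at the scale~$x_1=\lambda x$; any subsequential limit is then~$\sigma$-harmonic on~$(0,+\infty)$, vanishes on~$(-\infty,0]$, grows strictly slower than~$x^{\sigma+1}$ at infinity, and vanishes faster than~$x^\sigma$ at~$0$, and a Liouville classification of such functions forces the limit to be identically zero. The hypothesis~$\alpha<s<\sigma+1$ is exactly what is needed to keep the rescaled functions under control at infinity so that this classification applies.
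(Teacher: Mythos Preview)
Your proposal is correct and follows essentially the same three-step route as the paper: compactness via Corollary~\ref{HARICORSIVO2} and Ascoli--Arzel\`a, the limit equation via Lemma~\ref{-02390-23409-0}, and the boundary asymptotics via the linear result in Appendix~\ref{LIN}. Your treatment of the first two steps is in fact more detailed than the paper's, which simply cites those results.

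The only place where you diverge slightly is in the third step. The paper invokes Lemma~\ref{CLAUDIA} directly, whose proof is a one-line Poisson kernel computation: writing~$\bar u$ in~$(0,1)$ via the Poisson representation against its data on~$[1,+\infty)$ immediately exhibits the factor~$x_1^\sigma$ and a remainder that is analytic in~$x_1$, hence~$O(x_1^{1+\sigma})$. This already gives the sharp~$O(x_1^{(3+s)/2})$ remainder with no further work, provided the growth exponent of the data satisfies~$1+\alpha<2\sigma=1+s$, which is exactly the hypothesis~$\alpha<s$. Your Liouville/blow-up scheme would also work and is the standard device when no explicit kernel is available, but here it is unnecessary machinery: the Poisson kernel for the half-line fractional Laplacian is explicit, and the expansion drops out by inspection. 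So there is no ``main obstacle'' in the sharpness of the remainder; Lemma~\ref{CLAUDIA} handles it directly.
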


\begin{proof} The locally uniform
convergence of~$u_\e$ to some~$\bar u$ follows from
Corollary~\ref{HARICORSIVO2} and the Ascoli-Arzel\`a Theorem.
The corresponding limit equation is a consequence of Lemma~\ref{-02390-23409-0}.
Finally, the asymptotics in~\eqref{eqCAfddav} follows from
Lemma~\ref{CLAUDIA}.
\end{proof}

\section{A corner-like barrier}\label{HAR3}

Here, we construct a new subsolution of the fractional
mean curvature equation which exhibits a corner at the origin
and it is localized in space. Its existence plays a crucial role
in our setting, since it is capable to detect Lipschitz
singularities at the boundary. This, combined with the blow-up
analysis in Corollary~\ref{P:AL:IU}, is then sufficient to show discontinuity.
Hence, in some sense, the existence of this barrier
is the fundamental ingredient to show a result
like ``boundary corners imply discontinuities and stickiness,
but in absence of discontinuities the graph meets the boundary datum
in a differentiable way'' (as formalized in Corollary~\ref{CURV}).

The construction of this new barrier is as follows:

\begin{lemma}\label{P:AL:IUBB}
Let~$\tilde\ell>0$ and~$\bar\ell\in[-\tilde\ell,\tilde\ell]$. Let~$\lambda>0$ and~$L\in(\lambda,+\infty)$. Let~$
a$, $b$, $c>0$, $\alpha\in(0,s)$.

Let~$\e\in(0,1)$ and assume that
\begin{equation}\label{78-0123e}
L\ge\frac{c}{\e^{1/s}}.
\end{equation}
Let
$$ \beta(x_1):=\begin{cases}
\bar\ell x_1 & {\mbox{ if }}x_1\in(-L,0),\\
\bar\ell x_1+\e a x_1& {\mbox{ if }}x_1\in[0,\lambda],\\
\bar\ell x_1-\e bx_1^{1+\alpha} & {\mbox{ if }}x_1\in(\lambda,L).
\end{cases}$$
Let also
$$ F:=\{(x_1,x_2)\in\R^2 {\mbox{ s.t. }} x_1\in(-L,L) {\mbox{ and }}
x_2<\beta(x_1)\}.$$
Then there exists~$\mu\in \left(0,\frac\lambda8\right)$ depending only on~$s$,
$\tilde\ell$, $\lambda$, $a$, $b$, $c$ and~$\alpha$
(but independent of~$\e$ and~$L$), such that
\begin{equation}\label{SPkedyria} \int_{\R^2} \frac{\chi_{\R^2\setminus F}(y)-\chi_{F}(y)}{|x-y|^{2+s}}\,dy\le0\end{equation}
for every~$x=(x_1,x_2)\in\partial F$ with~$x_1\in(0,\mu)$.
\end{lemma}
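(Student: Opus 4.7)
My plan is to exploit the odd symmetry about the tangent half-plane at a boundary point on the middle piece of~$\partial F$. Fix~$x=(x_1,(\bar\ell+\e a)x_1)$ with~$x_1\in(0,\mu)$, and let~$H_x:=\{y_2<(\bar\ell+\e a)y_1\}$ be the half-plane under the tangent line to~$F$ at~$x$; crucially, this line passes through the origin, so the corner of~$F$ lies exactly on~$\partial H_x$. Since the tangent half-plane has vanishing fractional mean curvature at~$x$ by antisymmetry, the integral in~\eqref{SPkedyria} can be rewritten as
\[
\int_{\R^2}\frac{\chi_{\R^2\setminus F}(y)-\chi_F(y)}{|x-y|^{2+s}}\,dy
\;=\;2\int_{\R^2}\frac{\chi_{H_x}(y)-\chi_F(y)}{|x-y|^{2+s}}\,dy,
\]
so matters reduce to a weighted measure of~$F\triangle H_x$. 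In the strip~$\{y_1\in(0,\lambda)\}$ the set~$F$ coincides with~$H_x$ and contributes nothing, so it suffices to estimate the left piece~$\{y_1\in(-L,0)\}$, the right piece~$\{y_1\in(\lambda,L)\}$ and the two tails~$\{|y_1|>L\}$.

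The heart of the argument is a sharp lower bound on the left-wedge contribution. For~$y_1=-t\in(-L,0)$ the slice of~$F\setminus H_x$ is the segment~$\{y_2\in(-(\bar\ell+\e a)t,\,-\bar\ell t)\}$, of width~$\e a t$, lying below the left graph but above the tangent line. A direct estimate gives~$|x-y|\le C(\tilde\ell)(x_1+t)$ uniformly on the wedge, so, rescaling~$u=t/x_1$,
\[
-2\int_{\text{left wedge}}\frac{dy}{|x-y|^{2+s}}
\;\le\; -c(\tilde\ell)\,\e a\int_0^{L}\frac{t\,dt}{(x_1+t)^{2+s}}
\;\le\; -c'(\tilde\ell,s)\,\e a\,x_1^{-s},
\]
using that~$L/x_1\ge\lambda/\mu$ is bounded below and that~$\int_0^1 u\,(1+u)^{-2-s}\,du>0$. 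This is the key estimate: the corner at the origin produces a curvature term of order~$-\e\,x_1^{-s}$ that blows up as~$x_1\searrow0$.

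All remaining contributions are~$O(\e)$ with constants independent of~$x_1$. On the right piece the region~$H_x\setminus F$ equals~$\{\bar\ell y_1-\e b y_1^{1+\alpha}<y_2<(\bar\ell+\e a)y_1\}$, of width~$\e(ay_1+by_1^{1+\alpha})$, and since~$\mu<\lambda/8$ one has~$|x-y|\gtrsim y_1$; using~$\alpha<s$ gives
\[
2\int_{\text{right gap}}\frac{dy}{|x-y|^{2+s}}
\;\le\; C\,\e\int_\lambda^{+\infty}\bigl(y_1^{-1-s}+y_1^{\alpha-1-s}\bigr)\,dy_1
\;\le\; C(\lambda,a,b,\alpha,s,\tilde\ell)\,\e.
\]
For the tails~$|y_1|>L$ the set~$F$ is empty while~$H_x\cap\{|y_1|>L\}$ stays at distance~$\ge L/2$ from~$x$, contributing at most~$CL^{-s}\le C\,c^{-s}\,\e$ by~\eqref{78-0123e}. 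Collecting everything,
\[
\int_{\R^2}\frac{\chi_{\R^2\setminus F}(y)-\chi_F(y)}{|x-y|^{2+s}}\,dy
\;\le\; -c'(\tilde\ell,s)\,a\,\e\,x_1^{-s}+C_\star\,\e,
\]
and choosing~$\mu:=\min\bigl\{\lambda/8,\,(c'a/C_\star)^{1/s}\bigr\}$, which depends only on~$s,\tilde\ell,\lambda,a,b,c,\alpha$, makes the right-hand side non-positive on~$(0,\mu)$. The main technical point is the uniformity of the geometric constants for~$\bar\ell\in[-\tilde\ell,\tilde\ell]$: the tilt deforms every slice and distance, but since~$(1+\bar\ell^2)^{-(2+s)/2}$ is bounded below on this range, the negative wedge contribution genuinely scales like~$\e\,x_1^{-s}$ rather than merely~$\e\log(1/x_1)$, and it is exactly this sharp power that allows~$\mu$ to be chosen independently of both~$\e$ and~$L$.
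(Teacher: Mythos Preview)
Your proof is correct and follows essentially the same strategy as the paper's: compare $F$ to its tangent half-plane $H_x$ at $x$, observe that the left wedge $F\setminus H_x$ produces the dominant negative contribution of order $-\e a\,x_1^{-s}$, and bound the remaining discrepancies (the right gap on $(\lambda,L)$ and the tails $|y_1|>L$) by $O(\e)$ uniformly in $x_1$. The only difference is organizational: you slice by vertical strips and integrate the full wedge directly via the change of variable $u=t/x_1$, whereas the paper decomposes into concentric annuli $B_{\lambda/2}(P)$, $B_{L/2}(P)\setminus B_{\lambda/2}(P)$, $\R^2\setminus B_{L/2}(P)$ and extracts the wedge contribution from a specific parallelogram of area $\sim\e x_1^2$ contained in $B_{C x_1}(P)$.
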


\begin{figure}[h]
\centering
\includegraphics[width=15.5 cm]{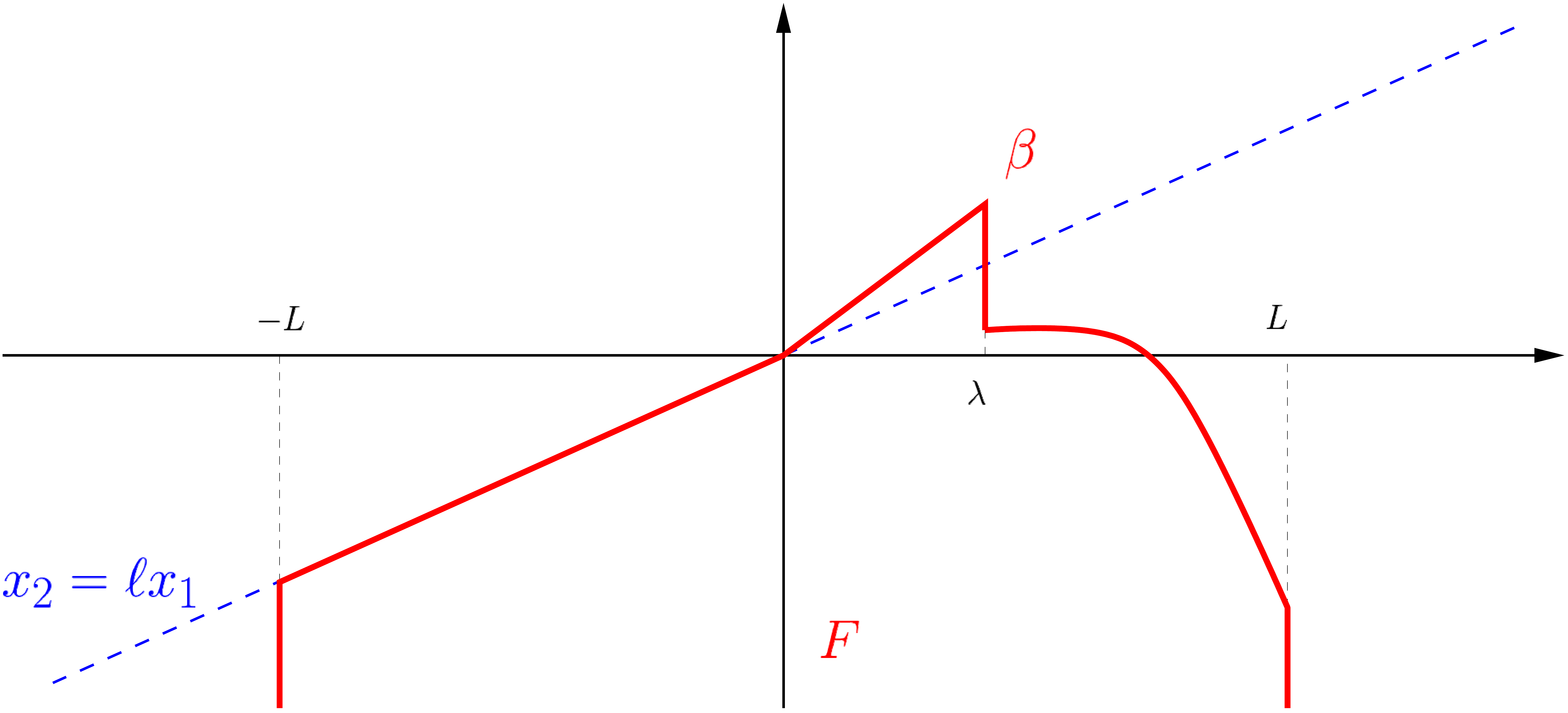}
\caption{\footnotesize\it The barrier in Lemma~\ref{P:AL:IUBB}.}
\label{FIGBARRIO}
\end{figure}

\begin{proof} The geometric situation under consideration
is depicted in Figure~\ref{FIGBARRIO}.
Fixed~$P=(P_1,P_2)\in\partial F$ with~$P_1\in(0,\mu)$,
as long as~$\mu<\frac\lambda2$, we have that~$P_2=\beta(P_1)=
(\bar\ell +\e a)\,P_1$, and
we consider the tangent half-plane of~$F$ through~$P$.
Namely, we define
$$ \Pi_P:=\{(x_1,x_2)\in\R^2 {\mbox{ s.t. }} x_2< (\bar\ell+\e a) (x_1-P_1)+P_2 \}.$$
Given~$r>0$ and~$R\in(r,+\infty]$, we also define
$$ H_P(r,R):=
\int_{B_R(P)\setminus B_r(P)} \frac{\big(\chi_{\R^2\setminus F}(y)-\chi_{F}(y)\big)-
\big(\chi_{\R^2\setminus \Pi_P}(y)-\chi_{\Pi_P}(y)\big)}{|P-y|^{2+s}}\,dy,$$
with the obvious notation that the ball of radius~$+\infty$ is simply the whole~$\R^2$.
Then, the desired claim is established once we prove that
\begin{equation}\label{TBOpAOjoi}
H_P(0,+\infty)\le0.
\end{equation}
The strategy for proving this is that the main positive contribution
comes from the corner at the origin, and the rest of the barrier
will provide negligible terms. To formalize this, we first observe that
\begin{equation}\label{0-A9238eg475-F}
\big(\chi_{\R^2\setminus F}(y)-\chi_{F}(y)\big)-
\big(\chi_{\R^2\setminus \Pi_P}(y)-\chi_{\Pi_P}(y)\big)\le0\qquad{\mbox{for all }}
y\in B_{\lambda/2}(P).
\end{equation}
To check this, let~$y\in B_{\lambda/2}(P)\cap \Pi_P$.
Then, since~$y\in\Pi_P$,
\begin{equation}\label{0-A9238eg475}
y_2< (\bar\ell+\e a) (y_1-P_1)+P_2=
(\bar\ell+\e a) (y_1-P_1)+(\bar\ell+\e a) P_1=(\bar\ell+\e a) y_1.
\end{equation}
Also, since~$y\in B_{\lambda/2}(P)$, we have that
$$ |y_1|\le |y_1-P_1|+|P_1|\le |y-P|+\mu<\frac\lambda2+\mu\le\lambda,$$
and thus
$$ \beta(y_1)=\bar\ell y_1+\e a y_1\,\chi_{[0,\lambda]}(y_1)\ge
(\bar\ell +\e a )y_1
.$$
This and~\eqref{0-A9238eg475} give that~$y_2<\beta(y_1)$,
and thus~$y\in F$. We have thereby established that~$
B_{\lambda/2}(P)\cap \Pi_P\subseteq F$, which
proves~\eqref{0-A9238eg475-F}.

Now, let
\begin{equation}\label{DEPP} {\mathcal{P}}:=
\left\{(y_1,y_2)\in\R^2 {\mbox{ s.t. }} y_1\in\left(-P_1,-\frac{P_1}2\right)
{\mbox{ and }}y_2-\bar\ell y_1\in\left( -\frac{\e a P_1}{4},0\right)\right\}.\end{equation}
We observe that
if~$y\in{\mathcal{P}}$ we have that
\begin{equation*}
\begin{split}
&|y_1-P_1|\le|y_1|+|P_1|< 2P_1
\\{\mbox{ and }}&
|y_2-P_2|\le |y_2-\bar\ell y_1|+|\bar\ell|\,|y_1|+|P_2|
<\frac{\e aP_1}{4}+|\bar\ell|\,P_1+|(\bar\ell+\e a)P_1|\\
&\qquad\qquad\le\left(
\frac{a}{4}+|\bar\ell|+(|\bar\ell|+a)\right)\,P_1
\le\left(
\frac{a}{4}+\tilde\ell+(\tilde\ell+a)\right)\,P_1.
\end{split}\end{equation*}
Accordingly,
\begin{equation}\label{PinFn-00}
{\mathcal{P}}\subseteq B_{C(\tilde\ell,a)\, P_1}(P),
\end{equation}
for some~$C(\tilde\ell,a)>0$.

We also claim that
\begin{equation}\label{PinFn}
{\mathcal{P}}\subseteq F\setminus \Pi_P.\end{equation}
Indeed, if~$y\in {\mathcal{P}}$, we know from~\eqref{DEPP}
that~$|y_1|<\lambda$, and thus~$\beta(y_1)=
\bar\ell y_1+\e a y_1\,\chi_{[0,\lambda]}(y_1)$.
Consequently,
$$ y_2-\beta(y_1)=y_2-\bar\ell y_1-\e a y_1\,\chi_{[0,\lambda]}(y_1)\le
y_2-\bar\ell y_1<0,
$$
which says that~$y\in F$. In addition,
$$ y_2-
(\bar\ell+\e a) (y_1-P_1)-P_2=(
y_2-
\bar\ell y_1)-\e a y_1>-\frac{\e aP_1}{4}+\frac{\e a P_1}2>0,
$$
which says that~$y\in\R^2\setminus\Pi_P$.
These observations give~\eqref{PinFn}.

Now, in light of~\eqref{PinFn-00} and~\eqref{PinFn},
noticing that~$C(\tilde\ell,a)P_1\le C(\tilde\ell,a)\mu<\lambda/2$
as long as~$\mu$ is
sufficiently small (possibly in dependence of~$\tilde\ell$ and~$a$),
we can write that
\begin{equation}\label{0-x-249293}
{\mathcal{P}}\subseteq B_{\lambda/2}(P)\cap (F\setminus \Pi_P).
\end{equation}
Also, since~${\mathcal{P}}$ is a parallelogram,
its area~$|{\mathcal{P}}|$ is equal to~$\frac{\e a P_1^2}8$.
{F}rom this, \eqref{0-A9238eg475-F}
and~\eqref{0-x-249293}, and using again~\eqref{PinFn-00},
we obtain that
\begin{equation}\label{9091.2394}
-H_P(0,\lambda/2)\ge
2\int_{{\mathcal{P}}} \frac{dy}{|P-y|^{2+s}}\ge
\frac{|{\mathcal{P}}|}{\big( C(\tilde\ell,a)P_1\big)^{2+s}}=
\frac{\e a P_1^2}{8\,\big( C(\tilde\ell,a)P_1\big)^{2+s}}=
\frac{C'(\tilde\ell,a)\,\e}{P_1^s},
\end{equation}
for some~$C'(\tilde\ell,a)>0$.

Now we take into account the contributions in~$B_{L/2}(P)\setminus B_{\lambda/2}(P)$.
To this end, let~$y\in B_{L/2}(P)\setminus B_{\lambda/2}(P)$.
If~$y\in\Pi_P\setminus F$, we see that
$$ \bar\ell y_1-\e a|y_1|-\e b|y_1|^{1+\alpha}\le\beta(y_1)\le y_2\le (\bar\ell+\e a) (y_1-P_1)+P_2=(\bar\ell+\e a) y_1.$$
Consequently,
\begin{equation}\label{-1x+s}
\begin{split}
H_P(\lambda/2,L/2)\,&\le 2
\int_{\big(
B_{L/2}(P)\setminus B_{\lambda/2}(P)\big)\cap(\Pi_P\setminus F)
} \frac{dy}{|P-y|^{2+s}}\\
&\le 2
\int_{\big(
B_{L/2}(P)\setminus B_{\lambda/2}(P)\big)\cap(\Pi_P\setminus F)
} \frac{dy}{|P_1-y_1|^{2+s}}\\
&\le 2
\int_{\{|y_1|\in(\lambda/2,L/2)\}}\left[
\int_{\bar\ell y_1-\e a|y_1|-\e b|y_1|^{1+\alpha}}^{(\bar\ell+\e a) y_1}
\frac{dy_2}{|P_1-y_1|^{2+s}}\right]\,dy_1\\
&\le 2
\int_{\{|y_1|>\lambda/2\}
} \frac{2\e a|y_1|+\e b|y_1|^{1+\alpha}}{|P_1-y_1|^{2+s}}\,dy_1\\
&\le 2
\int_{\{|t|>\lambda/4\}
} \frac{2\e a(|t|+\lambda)+\e b(|t|+\lambda)^{1+\alpha}}{|t|^{2+s}}\,dt\\
&\le C(s,a,b,\lambda,\alpha)\,\e,
\end{split}
\end{equation}
for some~$C(s,a,b,\lambda,\alpha)>0$.

Finally, we see that
$$ H_P(L/2,+\infty)\le
2\int_{\R^2\setminus B_{L/2}(P)} 
\frac{dy}{|P-y|^{2+s}}=\frac{C_0}{L^s},$$
for some constant~$C_0>0$.

Using this, \eqref{9091.2394} and~\eqref{-1x+s},
and recalling~\eqref{78-0123e}, we conclude that
\begin{eqnarray*}
H_P(0,+\infty)&=&
H_P(0,\lambda/2)+
H_P(\lambda/2,L/2)+
H_P(L/2,+\infty)\\&\le& -\frac{C'(\tilde\ell,a)\,\e}{P_1^s}+
C(s,a,b,\lambda,\alpha)\,\e+
\frac{C_0}{L^s}\\&\le&
\e\,
\left( -\frac{C'(\tilde\ell,a)}{\mu^s}+
C(s,a,b,\lambda,\alpha)+
\frac{C_0}{c^s}\right)\\
&\le&0,
\end{eqnarray*}
as long as
$$ \mu<\left(\frac{C'(\tilde\ell,a)}{C(s,a,b,\lambda,\alpha)+
\frac{C_0}{c^s}}\right)^{\frac1s},$$
and this completes the proof of~\eqref{TBOpAOjoi}, as desired.
\end{proof} 

\section{Boundary improvement of flatness for non-sticky points}\label{HAR4}

A classical method to obtain regularity results is based on the iteration
of a flatness principle, according to which graphs contained in a suitably narrow
cylinder are necessarily contained in the interior in an even narrower cylinder, with estimates.
In our situation, a result of this type cannot in general hold at the boundary,
due to the stickiness phenomenon. Nevertheless,
we will manage to produce a suitable version of an improvement of flatness scheme
in all the cases that do not exhibit boundary discontinuities. The technical details
of this scheme go as follows.

\begin{theorem}\label{TH6.8}
Let~$\alpha\in(0,s)$, $\ell\in\R$, $u:\R\to\R$, with~$u\in C^1([-h,0])$
for some~$h>0$, and
$$ E:=\{ (x_1,x_2)\in\R^2 {\mbox{ s.t. }} x_2<u(x_1)\}.$$
Let~$\e\in(0,1)$, and assume that~$E$ is locally $s$-minimal in~$(0,2^{\tilde k_0})\times\R$,
where
\begin{equation}\label{tilde k-0-0}
\tilde k_0:=\left\lceil \frac{-\log\e}{c_0}\right\rceil.\end{equation}
Suppose also that
\begin{equation}\label{NOSITIC} \lim_{x_1\searrow0}u(x_1)=0.\end{equation}
Then there exist~$\tilde\e_0$, $c_0\in(0,1)$ such that if~$\e\in(0,\tilde\e_0]$ the
following statement holds true.

If
\begin{eqnarray}
\label{DAQUIS}&&|u(x_1)-\ell x_1|\le \e^{\frac1{c_0}}\, |x_1|^{1+\alpha} {\mbox{ for all $x_1\in (-2^{\tilde k_0},0)$,}}\\
\label{UAdAFVD}&&|u(x_1)-\ell x_1|\le \e (2^k)^{1+\alpha} {\mbox{ for all $x_1\in (0,2^{k})$,
for all $k\in\{0,\dots, \tilde k_0\}$,}}
\end{eqnarray}
then, for all~$j\in\N$,
\begin{eqnarray}\label{8UJSJ-asod-1-cok}
&&|u(x_1)-\ell x_1|\le  \frac\e{(2^j)^{1+\alpha} }\qquad
{\mbox{ for all \;$x_1\in \left(0,\displaystyle\frac1{2^j}\right)$.}}
\end{eqnarray}
Moreover,
\begin{eqnarray}\label{8UJSJ-asod-1-cok2}
&&|u(x_1)-\ell x_1|\le  4\e\, x_1^{1+\alpha }\qquad
{\mbox{ for all \;$x_1\in \left(0,\displaystyle\frac1{2}\right)$.}}
\end{eqnarray}
\end{theorem}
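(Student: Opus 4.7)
The plan is to establish~\eqref{8UJSJ-asod-1-cok} by induction on $j$; the heart of the matter is a one-step improvement of flatness, proved by compactness and contradiction. The continuous bound~\eqref{8UJSJ-asod-1-cok2} then follows from~\eqref{8UJSJ-asod-1-cok} by interpolation across dyadic scales, which costs at most a multiplicative factor of $2^{1+\alpha} \le 4$.

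For the one-step improvement, I would argue by contradiction, supposing that there exist $\e_k \searrow 0$ and solutions $u_k$ satisfying~\eqref{DAQUIS},~\eqref{UAdAFVD}, and~\eqref{NOSITIC}, but with $\sup_{x_1 \in (0,1/2)}|u_k(x_1) - \ell x_1| > \e_k/2^{1+\alpha}$. Set the vertical rescalings $v_k(x_1) := (u_k(x_1) - \ell x_1)/\e_k$. Corollary~\ref{HARICORSIVO2} provides uniform H\"older control of $v_k$ on $[0,1/2]$ up to the origin. Lemma~\ref{-02390-23409} then yields, along a subsequence, a locally uniform limit $\bar u$ that is $(1+s)/2$-harmonic in $(0,\infty)$, vanishes on $(-\infty, 0)$ (the $\e^{1/c_0}$ factor on the left in~\eqref{DAQUIS} makes the rescaled left-side error infinitesimal once $c_0<1$), has at most $(1+\alpha)$-polynomial growth, and admits the boundary expansion
\begin{equation*}
\bar u(x_1) = \bar a\, x_1^{(1+s)/2} + O\!\left(x_1^{(3+s)/2}\right) \quad \text{as } x_1 \searrow 0
\end{equation*}
for some $\bar a \in \R$.

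The crucial step, and main obstacle, is forcing $\bar a = 0$ via the non-stickiness assumption~\eqref{NOSITIC}. Assume $\bar a > 0$ (the other sign is symmetric). Because $(1+s)/2 < 1$, the expansion gives $\bar u(x_1) \ge (\bar a/2)\,x_1^{(1+s)/2} \gg x_1$ for small $x_1 > 0$. Translating back through the rescaling, for each large $k$ one obtains some $\delta_k > 0$ with $u_k(x_1) \ge (\ell + \delta_k)\,x_1$ throughout $(0, \delta_k)$. This triggers the first alternative of Corollary~\ref{P:AL:IU}, whose conclusion forces a $\delta_k' > 0$ with $(0, \delta_k')^2 \subseteq E_k$, so $\liminf_{x_1 \searrow 0} u_k(x_1) > 0$, contradicting~\eqref{NOSITIC}. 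This step is the ultimate purpose of the corner barrier of Lemma~\ref{P:AL:IUBB}, which underlies Corollary~\ref{P:AL:IU}.

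Once $\bar a = 0$, the Liouville-type classification of $(1+s)/2$-harmonic functions on $(0,\infty)$ vanishing on $(-\infty, 0]$ with growth strictly less than $\sigma + 1 = (3+s)/2$ (an interval containing $1+\alpha$, since $\alpha < s$) forces $\bar u \equiv 0$. The uniform H\"older bound from Corollary~\ref{HARICORSIVO2} then upgrades the convergence to $v_k \to 0$ uniformly on $[0,1/2]$, contradicting the assumed failure. For the induction, I would rescale at the dyadic scale $r = 2^{-(j-1)}$ via $\tilde u(y) := u(ry)/r$, $\tilde \e := \e\, r^\alpha$; the rescaled function inherits $s$-minimality and~\eqref{NOSITIC}, and, for $c_0$ small enough depending on $\alpha$ and $s$, the sub-unit powers of $r$ preserve the rescaled forms of~\eqref{DAQUIS} and~\eqref{UAdAFVD}. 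Applying the one-step improvement to $\tilde u$ produces the $j$-th step of~\eqref{8UJSJ-asod-1-cok}; finally,~\eqref{8UJSJ-asod-1-cok2} is derived by selecting, for each $x_1 \in (0,1/2)$, the unique $j \ge 1$ with $x_1 \in [2^{-(j+1)}, 2^{-j}]$ and applying the corresponding dyadic bound.
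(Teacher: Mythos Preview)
Your overall architecture matches the paper's: reduce to a one-step improvement, prove it by passing to the limit of the vertical rescalings via Lemma~\ref{-02390-23409}, show $\bar a=0$, and then iterate dyadically. Your post--$\bar a=0$ conclusion via a Liouville classification for $\sigma$-harmonic functions on~$(0,\infty)$ vanishing on~$(-\infty,0]$ with subcritical growth is a legitimate alternative to the paper's route (which instead uses only the local expansion~\eqref{eqCAfddav} to obtain the improvement at a delayed scale~$2^{-k_\star}$ and then removes the delay by a further rescaling).

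The genuine gap is in your argument for $\bar a=0$. From $v_k\to\bar u$ locally uniformly and $\bar u(x_1)\ge(\bar a/2)x_1^{(1+s)/2}$ you only get $v_k(x_1)\ge(\bar a/2)x_1^{(1+s)/2}-\eta_k$ with $\eta_k\to0$ \emph{uniform in~$x_1$}; near $x_1=0$ the error $\eta_k$ dominates the signal, so you cannot conclude $u_k(x_1)\ge(\ell+\delta_k)x_1$ on a full interval~$(0,\delta_k)$. Hence the hypothesis~\eqref{7l29} of Corollary~\ref{P:AL:IU} is not verified. Also, Corollary~\ref{P:AL:IU} does \emph{not} rest on Lemma~\ref{P:AL:IUBB}; it follows from Theorem~\ref{P:AL} (via the sliding argument of Section~\ref{SE:2}). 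In the paper the corner barrier~$\beta$ of Lemma~\ref{P:AL:IUBB} is instead applied \emph{directly} to~$u$ at the fixed small~$\e$: one checks $\beta\le u$ on $(-\infty,0)\cup[\mu,+\infty)$ using~\eqref{DAQUIS}, \eqref{UAdAFVD} and the approximation~\eqref{ExavxciECAALQ}, while on the remaining interval~$(0,\mu)$ one invokes the subsolution property~\eqref{SPkedyria} and the maximum principle to rule out any touching. This yields $u(x_1)\ge(\ell+\e(\delta+a))x_1$ on~$[0,\lambda]$ without any loss near the origin, whence the second blow-up~$E_{00}$ is not a half-plane and Theorem~\ref{P:AL} forces stickiness, contradicting~\eqref{NOSITIC}. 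Your proposal needs this barrier step (or an equivalent device propagating the lower bound down to~$x_1=0$) to close the argument.
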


\begin{proof} We observe that~\eqref{8UJSJ-asod-1-cok2}
follows from~\eqref{8UJSJ-asod-1-cok}, hence we focus on the proof
of~\eqref{8UJSJ-asod-1-cok}. Moreover, we can obtain~\eqref{8UJSJ-asod-1-cok}
iteratively, that is, we prove it for~$j=1$, then we scale and we apply
it recursively.

Hence, the claims in Theorem~\ref{TH6.8} are established once we prove that
\begin{eqnarray}\label{8UJSJ-asod-1}
&&|u(x_1)-\ell x_1|\le  \frac\e{2^{1+\alpha} }\qquad
{\mbox{ for all \;$x_1\in \left(0,\displaystyle\frac12\right)$.}}
\end{eqnarray}
To prove~\eqref{8UJSJ-asod-1}, we first exploit Lemma~\ref{-02390-23409}.
In particular, by~\eqref{XCAiqw}  and~\eqref{eqCAfddav},
fixed any~$\delta\in(0,1)$, to be taken suitably small in what follows,
we can write that
\begin{equation}\label{ExavxciECAALQ}
| u(x_1)-\ell x_1-\e\bar a x_1^{\frac{1+s}{2}} |\le C\e\,(
x_1^{\frac{3+s}{2}} +\delta),
\end{equation}
for all~$x_1\in(0,3)$, for some~$C>0$.

We claim that
\begin{equation}\label{abarra0}
\bar a=0.
\end{equation}
To prove this, we argue by contradiction and we suppose that~$\bar a>0$
(the case~$\bar a<0$ can be treated similarly).

Without loss of generality, we can assume that~$c_0\in(0,1)$ is so small that
\begin{eqnarray}\label{56wh92}
&&\frac{\log2}{c_0}\ge \frac1s,\\
\label{recahd}{\mbox{and }}&& 1-\log 2-c_0>0.
\end{eqnarray}
We consider the barrier~$\beta$ in Lemma~\ref{P:AL:IUBB},
with~$\bar\ell:=\ell+\e\delta$, $\lambda:=\min\left\{1,\frac{\bar a}{4C}\right\}$, $L:=\frac{1}{2\,\e^{\frac{\log2}{c_0}}}$, $a:=\frac{\bar a}8$, $b:=2^{2+\alpha}+\frac{3}{\lambda^{1+\alpha}}$, $c:=\frac12$.
In this way, condition~\eqref{78-0123e}
is fulfilled, thanks to~\eqref{56wh92},
and therefore Lemma~\ref{P:AL:IUBB} produces a suitable~$\mu\in(0,1)$
such that~\eqref{SPkedyria} holds true (and, in our setting, we are taking~$\e\ll\delta\ll\mu$).
We slide the barrier of Lemma~\ref{P:AL:IUBB} from below till it touches~$u$.
To this end, we point out that the original barrier~$\beta$ in
Lemma~\ref{P:AL:IUBB} lies below~$u$ in~$(-\infty,0)\cup[\mu,+\infty)$.

Indeed, if~$x_1\le-L$ or~$x_1\ge L$ the claim is obvious.
If instead~$x_1\in(-L,0)$, 
we observe that, by~\eqref{tilde k-0-0},
\begin{equation}\label{QUELLA} 2^{\tilde k_0}\ge2^{-1-\frac{\log \e}{c_0}}=\frac1{2\e^{\frac{\log2}{c_0}}}=L.
\end{equation}
Therefore, $x_1\in(-2^{\tilde k_0},0)$, and hence
we use~\eqref{DAQUIS} and see that
\begin{eqnarray*}&& \beta(x_1)-u(x_1)=\bar\ell x_1-u(x_1)=\ell x_1-u(x_1)-\e \delta|x_1|\\&&\qquad\le
\e^{\frac1{c_0}}\, |x_1|^{1+\alpha}-\e\delta|x_1|\le
\e\,|x_1|(L^\alpha\e^{\frac{1-c_0}{c_0}}-\delta)
\\&&\qquad=
\e\,|x_1|(\e^{\frac{1-c_0}{c_0}-\frac{\alpha\log2}{c_0}}-\delta)
<0,\end{eqnarray*}
as long as~$\e$ is small enough (recall that~$\frac{1-c_0}{c_0}-\frac{\alpha\log2}{c_0}>0$,
thanks to~\eqref{recahd}).

If instead~$x_1\in[\mu,\lambda]$, we exploit~\eqref{ExavxciECAALQ}
and we write that
\begin{eqnarray*}&& \beta(x_1)-u(x_1)=(\bar\ell +\e a)x_1-u(x_1)
= \left(\ell+\e\delta +\frac{\e \bar a}8\right)x_1-u(x_1)
\\&&\qquad\le
\left( \e\delta +\frac{\e \bar a}8\right)x_1
-\e\bar a x_1^{\frac{1+s}{2}} +C\e\,(x_1^{\frac{3+s}{2}} +\delta)
\\&&\qquad\le\e\,x_1^{\frac{1+s}{2}}\,\left(
\left( \delta +\frac{ \bar a}8\right)x_1^{\frac{1-s}{2}}
-\bar a  +C\,\left(x_1 +\frac\delta{x_1^{\frac{1+s}{2}} }\right)\right)
\\&&\qquad\le\e\,x_1^{\frac{1+s}{2}}\,\left(2
\left( \delta +\frac{ \bar a}8\right)
-\bar a  +C\,\left(\frac{\bar a}{4C} +\frac\delta{\mu^{\frac{1+s}{2}} }\right)\right)\\&&\qquad
=\e\,x_1^{\frac{1+s}{2}}\,\left(
-\frac{\bar a}{2}  +2\delta+\frac{C\delta}{\mu^{\frac{1+s}{2}} }\right)<0.
\end{eqnarray*}
Then, if~$x_1\in[\lambda,1]$,
we use~\eqref{UAdAFVD} with~$k:=0$ to write that~$|u(x_1)-\ell x_1|\le\e$
and thus
\begin{eqnarray*}
\beta(x_1)-u(x_1)=(\ell+\e\delta) x_1-\e b x_1^{1+\alpha}-u(x_1)\le
2\e-\e b\lambda^{1+\alpha}<0.
\end{eqnarray*}
Finally, if~$x_1\in[1,L]$, recalling~\eqref{QUELLA}, we have that~$x_1\in
[1, 2^{\tilde k_0}]$. Therefore,
we can take~$k\in\{0,\dots,\tilde k_0\}$
such that~$2^{k-1}\le x_1\le 2^k$. Then,
we recall~\eqref{UAdAFVD}
and we conclude that
$$ |u(x_1)-\ell x_1|\le 2^{1+\alpha}\e x_1^{1+\alpha}.$$
Accordingly,
\begin{eqnarray*}&& \beta(x_1)-u(x_1)=(\ell+\e\delta) x_1-\e b x_1^{1+\alpha}-u(x_1)\le
2^{1+\alpha}\e x_1^{1+\alpha}+\e\delta x_1-\e b x_1^{1+\alpha}
\\&&\qquad\le \e x_1^{1+\alpha}\,\left(
2^{1+\alpha} +\frac\delta{x_1^{\alpha}}- b \right)\le
\e x_1^{1+\alpha}\,\left(
2^{1+\alpha} +1- b \right)<0.
\end{eqnarray*}
With this, we can slide the barrier from below and deduce
from~\eqref{SPkedyria} that~$\beta\le u$ in the whole~$\R$.

Consequently, if we consider
the second blow-up~$E_{00}$ given in Lemma~\ref{secondo blow up},
we see that
\begin{equation}\label{0-2ejdjdj}
\begin{split}
E_{00}\;&\supseteq
\{ x=(x_1,x_2) {\mbox{ s.t. }} x_1>0 {\mbox{ and }} x_2<(\bar\ell+\e a)x_1\}
\\&=\{ x=(x_1,x_2) {\mbox{ s.t. }} x_1>0 {\mbox{ and }} x_2<(\ell+\e(\delta+ a))x_1\}.
\end{split}\end{equation}
On the other hand, by~\eqref{BL3-0} and~\eqref{DAQUIS},
$$ E_{00}\cap\{x_1<0\}=
\{ x=(x_1,x_2) {\mbox{ s.t. }} x_1<0 {\mbox{ and }} x_2< \ell x_1\}.$$
Using this, \eqref{0-2ejdjdj} and Theorem~\ref{P:AL}, we infer that
\begin{equation}\label{LPSIQA02}
(0,\delta')^2\subseteq E,\end{equation} for some~$\delta'>0$.
Indeed, Theorem~\ref{P:AL} would give that either~\eqref{LPSIQA02}
is satisfied, or~$(0,\delta')^2\subseteq E^c$, but the latter
possibility would give that~$E_{00}\cap\{x_1>0\}=\varnothing$,
which is in contradiction with~\eqref{0-2ejdjdj}.

As a consequence of~\eqref{LPSIQA02}, we have that
$$ \lim_{x_1\searrow0} u(x_1)\ge\delta',$$
which is in contradiction with~\eqref{NOSITIC} and thus
it completes the proof of~\eqref{abarra0}.

{F}rom this and~\eqref{ExavxciECAALQ}, we conclude that
\begin{equation}\label{8UJSJ-asod}
{\mbox{for all }}x_1\in(0,2^{-k_\star}),\qquad
| u(x_1)-\ell x_1 |\le C\e\,(x_1^{\frac{3+s}{2}} +\delta)\le\frac\e{2^{k_\star(1+\alpha)}},
\end{equation}
for a sufficiently large~$k_\star\in\N$, as long as~$\delta>0$ is chosen small enough. 

We remark that~\eqref{8UJSJ-asod}
is ``almost'' the desired result in~\eqref{8UJSJ-asod-1},
up to the ``delay'' produced by this~$k_\star\in\N$
(which now plays the role of a universal constant).
To remove such a delay, we take the freedom of further reducing the constant~$c_0$ in~\eqref{tilde k-0-0},
and we consider a rescaling by the factor~$2^{1-k_\star}$,
that is we consider the set~$E_\star:=2^{1-k_\star}E$
and the function~$u_\star(x_1):=2^{1-k_\star}u(2^{k_\star-1}x_1)$.

In this way, we have that~\eqref{DAQUIS}
and~\eqref{UAdAFVD} are fulfilled with~$\e$ replaced by~$\e_\star:=2^{\alpha(k_\star-1)}\e$.
To check this, we observe that, 
changing~$\e$ to~$\e_\star$ only affects the constant~$c_0$
in~\eqref{tilde k-0-0}, and therefore
if~$x_1\in(0,2^k)$, then~$2^{k_\star-1}x_1\in
(0,2^{k+k_\star-1})$ and so, by~\eqref{UAdAFVD},
$$ |u_\star(x_1)-\ell x_1|=
2^{1-k_\star}|u(2^{k_\star-1}x_1)-\ell(2^{k_\star-1}x_1)|\le
2^{1-k_\star}\e\,(2^{k+k_\star-1})^{1+\alpha}
= \e_\star\, (2^k)^{1+\alpha}, $$
which is~\eqref{UAdAFVD} with~$u$ and~$\e$ replaced by~$u_\star$
and~$\e_\star$, respectively.

Similarly, if~$x_1\in (-2^{\tilde k_0},0)$, we write~$2^{k_\star-1}
x_1\in (-2^{\tilde k_0+k_\star-1},0)$ and exploit~\eqref{DAQUIS}
to find that
\begin{eqnarray*}&& |u_\star(x_1)-\ell x_1|=
2^{1-k_\star}|u(2^{k_\star-1}x_1)-\ell(2^{k_\star-1}x_1)|\le
2^{1-k_\star}\, \e^{\frac1{c_0}}\, |2^{k_\star-1}x_1|^{1+\alpha} 
\\&&\qquad\qquad=
2^{\alpha(k_\star-1)}\, 2^{\frac{\alpha(1-k_\star)}{c_0}}\,\e_\star^{\frac1{c_0}}\, |x_1|^{1+\alpha}
\le \e_\star^{\frac1{c_0}}\, |x_1|^{1+\alpha} .
\end{eqnarray*}
This says that~\eqref{DAQUIS}
is satisfied
with~$u$ and~$\e$ replaced by~$u_\star$
and~$\e_\star$, respectively.

Then, we are in the position of
using~\eqref{8UJSJ-asod} with~$u$ and~$\e$
replaced by~$u_\star$ and~$\e_\star$, and accordingly we see that
\begin{eqnarray*}&&
\sup_{y_1\in(0,1/2)}| u(y_1)-\ell y_1 |=2^{k_\star-1}
\sup_{y_1\in(0,1/2)}|u_\star(2^{1-k_\star}y_1)-\ell(2^{1-k_\star}y_1)|\\
&&\qquad=2^{k_\star-1}
\sup_{x_1\in(0,2^{-k_\star})}|u_\star(x_1)-\ell x_1|\le2^{k_\star-1}\times
\frac{\e_\star}{2^{k_\star(1+\alpha)}}=\frac\e{2^{1+\alpha}}.
\end{eqnarray*}
This completes the proof of~\eqref{8UJSJ-asod-1}.
\end{proof}

{F}rom this, we obtain the following global regularity result:

\begin{theorem}\label{PROVV:Areg}
Let~$\alpha\in(0,s)$, $u:\R\to\R$, with
\begin{equation}\label{DAFUO}
u\in C^{1,\alpha}([-h,0])\end{equation}
for some~$h>0$, and
$$ E:=\{ (x_1,x_2)\in\R^2 {\mbox{ s.t. }} x_2<u(x_1)\}.$$
Assume that~$E$ is locally $s$-minimal in~$(0,1)\times\R$.
Suppose also that
\begin{equation}\label{765763jdd}\lim_{x_1\searrow0}u(x_1)=\lim_{x_1\nearrow0}u(x_1).\end{equation}
Then, $u\in C^{1,\alpha}(\left[-h,\frac12\right])$.
\end{theorem}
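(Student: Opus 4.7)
The plan is to reduce the statement to Theorem~\ref{TH6.8} by a single dilation of~$E$, after first upgrading the continuity assumption~\eqref{765763jdd} to boundary differentiability via the blow-up machinery of Section~\ref{SE:1}. Translating vertically, I may assume that the common limit in~\eqref{765763jdd} is zero, and set $\ell:=u'(0^-)$, which exists by the $C^{1,\alpha}$ hypothesis~\eqref{DAFUO}. Interior $C^\infty$-regularity of nonlocal minimal graphs reduces the task to producing a $C^{1,\alpha}$ estimate at the boundary point~$x_1=0^+$ that matches the H\"older data from~$[-h,0]$.

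First I would show that~$u$ is differentiable at~$0^+$ with derivative~$\ell$. By Lemma~\ref{secondo blow up} the second blow-up cone~$E_{00}$ agrees with $\{x_2<\ell x_1\}$ on $\{x_1<0\}$; the continuity assumption~\eqref{765763jdd} prevents~$E$ from being trivial in any small rectangle $(0,r)\times(-r,r)$, so the dichotomy in Theorem~\ref{P:AL} forces~$E_{00}$ to be a halfplane, and graphicality with outer slope~$\ell$ pins this down to $E_{00}=\{x_2<\ell x_1\}$. Lemma~\ref{Sempli}, combined with the Hausdorff convergence from Corollary~\ref{Sempli-APP}, then yields $k\,u(\cdot/k)\to \ell\,(\cdot)$ uniformly on each compact subset of $(0,\infty)$, and in particular
$$
\lim_{y\searrow 0}\frac{u(y)-\ell y}{y}=0.
$$

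With differentiability in hand, I would apply Theorem~\ref{TH6.8} after one rescaling. Fix $\e:=\tilde\e_0/2$ (thereby fixing~$\tilde k_0$ through~\eqref{tilde k-0-0}), and choose $r_*\in(0,1)$ with $|u(y)-\ell y|\le\e y$ for $y\in(0,r_*)$. Pick the dilation factor $r>0$ subject to
$$
r\cdot 2^{\tilde k_0}\le \min\{r_*,h\}\qquad\text{and}\qquad [u']_{C^\alpha([-h,0])}\,r^\alpha\le \e^{1/c_0},
$$
and set $\tilde u(x_1):=r^{-1}u(rx_1)$, $\tilde E:=r^{-1}E$; the latter is locally $s$-minimal in $(0,1/r)\supseteq(0,2^{\tilde k_0})$. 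Hypothesis~\eqref{DAQUIS} then comes from the rescaled $C^{1,\alpha}$ exterior datum, hypothesis~\eqref{NOSITIC} from the normalization, and hypothesis~\eqref{UAdAFVD} from the inequality $|u(y)-\ell y|\le\e y$ on $(0,r_*)$: for $x_1\in(0,2^k)$ with $k\le\tilde k_0$, the bound $rx_1\le r\cdot 2^{\tilde k_0}\le r_*$ gives
$$
|\tilde u(x_1)-\ell x_1|=r^{-1}|u(rx_1)-\ell\,rx_1|\le \e x_1\le \e(2^k)^{1+\alpha}.
$$
Theorem~\ref{TH6.8} then produces $|\tilde u(x_1)-\ell x_1|\le 4\e\,x_1^{1+\alpha}$ on $(0,1/2)$, which in the original coordinates reads $|u(y)-\ell y|\le 4\e\,r^{-\alpha}y^{1+\alpha}$ on $(0,r/2)$.

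Finally, I would upgrade this pointwise $C^{1,\alpha}$ control at~$0^+$ to a uniform $C^{1,\alpha}$ bound on $[0,1/2]$ by combining it with interior regularity on dyadic scales $[y_0,2y_0]\Subset(0,1)$: the normalized perturbation $y_0^{-(1+\alpha)}(u(y_0\,\cdot)-\ell y_0\,\cdot)$ is uniformly bounded on $[1/2,2]$, and interior Schauder-type estimates (Appendix~\ref{LIN}) yield a uniform H\"older bound for its derivative, hence for~$u'$ up to~$0^+$. Since $u'(0^+)=\ell=u'(0^-)$ and the two H\"older bounds match at the origin, this produces $u\in C^{1,\alpha}([-h,1/2])$. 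The main obstacle will be verifying hypothesis~\eqref{UAdAFVD}: unlike on the left, no a priori quantitative comparison between~$u$ and its tangent line is available on the positive side, so one is forced to route through the blow-up alternative of Theorem~\ref{P:AL} in order to extract differentiability at~$0^+$ as the necessary input for the improvement-of-flatness scheme of Theorem~\ref{TH6.8}.
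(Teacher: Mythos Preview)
Your proposal is correct and follows essentially the same route as the paper: both first use Theorem~\ref{P:AL} (together with the continuity assumption, which rules out the ``trivial in a rectangle'' alternative) to identify the second blow-up as the halfplane $\{x_2<\ell x_1\}$, then invoke the Hausdorff convergence of Corollary~\ref{Sempli-APP} to place a rescaling of~$u$ into the hypotheses of Theorem~\ref{TH6.8}, and finally glue the resulting boundary $C^{1,\alpha}$ estimate with interior regularity. The paper organizes the last step as a two-case analysis on the midpoint $z=\tfrac{\bar x_1+\bar y_1}{2}$ (near~$0$ use Theorem~\ref{TH6.8}, away from~$0$ use the interior improvement of flatness of Theorem~6.8 in~\cite{MR2675483}), which is equivalent to your dyadic-scale argument; one small correction is that the interior input you need there is that nonlinear improvement of flatness from~\cite{MR2675483}, not the linear results of Appendix~\ref{LIN}.
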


\begin{proof} Up to a vertical translation, we can suppose that
\begin{equation}\label{LIM73}
{\mbox{the
limit in~\eqref{765763jdd} is equal to~$0$,}}\end{equation} thus reducing to the setting in~\eqref{NOSITIC}.
Hence, the regularity in the interior is warranted by
the results in~\cites{MR2675483},
and the boundary regularity is a consequence of
Theorem~\ref{TH6.8}. Combining these two results, we obtain
the desired $C^{1,\alpha}$-regularity up to the boundary.
More precisely, one proves uniform pointwise
$C^{1,\alpha}$-regularity at any given point
by first exploiting the boundary improvement of flatness
at the origin given by
Theorem~\ref{TH6.8} up to  a suitable scale, then switching
the center of the improvement of flatness
and applying there the interior improvement of flatness given by Theorem~6.8
of~\cite{MR2675483}. 

The technical details of the proof go as follows.
First of all, we consider the double blow-up~$E_{00}$ in Lemma~\ref{secondo blow up}.
By~\eqref{LIM73} and Theorem~\ref{P:AL}, we obtain that
\begin{equation}\label{00pe}
{\mbox{$E_{00}$ is a halfplane, say~$E_{00}=\{ x_2=\ell x_1\}$,}}\end{equation}
for some~$\ell\in\R$. As a matter of fact, by~\eqref{DAFUO},
we have that~$\ell=u'(0^-)$.

Now we define~$\e_\star$ to be the minimum between the~$\tilde\e_0$
in Theorem~\ref{TH6.8} here and the corresponding quantity in
Theorem~6.8 of~\cite{MR2675483}. Similarly, we take~$\tilde k_0$
as in Theorem~\ref{TH6.8} (with~$\e$ there taken to be equal to~$\tilde\e_0$). 
Then, we take~$\tilde k_\star$ to be the maximum between such a~$\tilde k_0$
and the corresponding quantity in Theorem~6.8 of~\cite{MR2675483}.
{F}rom now on, $\e_\star$
and~$\tilde k_\star$ will be fixed quantities. Then, we use~\eqref{00pe} and
Corollary~\ref{Sempli-APP}, to find~$k_\star$, only depending on
the fixed~$\e_\star$ and~$\tilde k_\star$, such that~$\partial E_{k_\star}$
lies locally so close to~$\partial E_{00}$ that
\begin{equation}\label{MEG}
| u_\star(x_1)-\ell x_1|\le \frac{\e_\star}{ 2^{\tilde k_\star(1+\alpha)+10}}\qquad{\mbox{ for all }}x_1\in (-2^{\tilde k_\star+1},2^{\tilde k_\star+1}),
\end{equation}
where
\begin{equation}\label{POBIEY2} u_\star(x_1):= k_\star u\left(\frac{x_1}{k_\star}\right).\end{equation}
For further use, we remark that, in view of~\eqref{MEG}, for all~$k\in\{0,\dots,\tilde k_\star\}$,
\begin{equation}\label{BAT}
| u_\star(x_1)-\ell x_1|\le \frac{\e_\star }{{ 2^{\tilde k_\star(1+\alpha)+10}}}\le \frac{
\e_\star}{ { 2^{\tilde k_\star(1+\alpha)+10}} } (2^k)^{1+\alpha}
\qquad{\mbox{ for all }}x_1\in (0,2^{k}).
\end{equation}
Then, Theorem~\ref{PROVV:Areg} is proven once we show that,
for all~$\bar x_1$, $\bar y_1\in\left[0,\frac12\right]$, there exists~$\ell_{\bar y_1}\in\R$ such that
\begin{equation}\label{POBIEY}
|u_\star(\bar x_1)-u_\star(\bar y_1)-\ell_{\bar y_1} (\bar x_1-\bar y_1)|\le C\,|\bar x_1-\bar y_1|^{1+\alpha},
\end{equation}
for a suitable~$C>0$, since a similar estimate for~$u$, up to changing~$C$,
would follow directly from~\eqref{POBIEY} and~\eqref{POBIEY2}.

To prove this, we let~$d:=\frac{ |\bar x_1-\bar y_1| }{2}$
and~$z:=\frac{\bar x_1+\bar y_1}2$.
We distinguish two cases: either~$z\in[0, 2d]$, or~$z\in\left(2d, \frac12\right]$.
To start with, let us suppose that~$z\in[0, 2d]$.
In this case, we deduce from~\eqref{BAT} that
\begin{equation*}
| u_\star(x_1)-\ell x_1|\le \tilde\e_0 (2^k)^{1+\alpha}
\qquad{\mbox{ for all }}x_1\in (0,2^{k}).
\end{equation*}
Consequently, the assumption in~\eqref{UAdAFVD} is satisfied by~$u_\star$.
Furthermore, using~\eqref{DAFUO}, we have that, if~$x_1\in(-h,0)$,
$$ |u(x_1)-\ell x_1|=|u(x_1)-u(0)-u'(0^-) x|\le C |x_1|^{1+\alpha},$$
for some~$C>0$, and accordingly, if~$x_1\in (-h k_\star,0)$
$$ |u_\star(x_1)-\ell x_1|\le \frac{C}{k_\star^\alpha}\, |x_1|^{1+\alpha}.$$
This says that, possibly enlarging~$k_\star$ by a fixed amount,
also the condition in~\eqref{DAQUIS} is satisfied by~$u_\star$.

Hence, we are in the position of using Theorem~\ref{TH6.8} on~$u_\star$,
thus deducing from~\eqref{8UJSJ-asod-1-cok2} that, for all~$x_1\in\left(0,\displaystyle\frac1{2}\right)$,
$$ |u_\star(x_1)-\ell x_1|\le  4\tilde\e_0\, x_1^{1+\alpha }.$$
As a consequence,
$$ |u_\star(\bar x_1)-u_\star(\bar y_1)-\ell (\bar x_1-\bar y_1)|\le
|u_\star(\bar x_1)-\ell \bar x_1|+
|u_\star(\bar y_1)-\ell \bar y_1|\le 4\tilde\e_0\, \big(\bar x_1^{1+\alpha }+\bar y_1^{1+\alpha }\big).$$
Hence, since
$$ \frac{\bar x_1}2,\,\frac{\bar  y_1}2\le \frac{\bar x_1+\bar y_1}{2}=z\le 2d=|\bar x_1-\bar y_1|,$$
we conclude that
$$ |u_\star(\bar x_1)-u_\star(\bar y_1)-\ell (\bar x_1-\bar y_1)|\le
32 \tilde\e_0\, |\bar x_1-\bar y_1|^{1+\alpha },$$
and this establishes~\eqref{POBIEY} with~$\ell_{\bar y_1}:=\ell$
when~$z\in[0, 2d]$.

Hence, we suppose now that
\begin{equation}\label{CAS8123}
z\in\left(2d, \frac12\right]\end{equation}
and we prove~\eqref{POBIEY} also in this case.
To this end, we set
\begin{equation}\label{UTIL} \tilde u(x_1):=\frac{u_\star(z(1+x_1))}{z}.\end{equation}
Then, $\tilde u$ describes a nonlocal minimal graph in~$\left(-1,\frac1z-1\right)$,
which contains~$(-1,1)$. Moreover,
\begin{equation}\label{BAT2}\begin{split}& |\tilde u(x_1)-\tilde u(0)-\ell x_1|=\frac1z\,\big| u_\star(z(1+x_1))-z\ell x_1- u_\star(z)
\big|\\ &\qquad\le\frac1z\,\Big(\big| u_\star(z(1+x_1))-\ell (z(1+x_1))\big|+| u_\star(z)-\ell z|
\Big).\end{split}\end{equation}
On the other hand, by~\eqref{BAT}, and recalling also~\eqref{8UJSJ-asod-1-cok2},
for all~$x_1\in(0,2^{\tilde k_\star})$,
$$ |u_\star(x_1)-\ell x_1|\le \frac{\e_\star}{
{ 2^{\tilde k_\star(1+\alpha)+8}}} \,x_1^{1+\alpha}.$$
Hence, for all~$x_1\in(-1,1)$,
$$ \big| u_\star(z(1+x_1))-\ell (z(1+x_1))\big|\le \frac{\e_\star }{{ 2^{\tilde k_\star(1+\alpha)+6}}}\,z^{1+\alpha}(1+x_1)^{1+\alpha}
\le \frac{\e_\star}{{ 2^{\tilde k_\star(1+\alpha)+4}}} z^{1+\alpha}.
$$
This and~\eqref{BAT2} give that, if~$k\in \{0,\dots,\tilde k_\star\}$
and~$x_1\in(-2^{-k},2^{-k})$,
$$ |\tilde u(x_1)-\tilde u(0)-\ell x_1|\le \frac{\e_\star \,z^\alpha}{ 2^{\tilde k_\star(1+\alpha)}}\le
\frac{\e_\star\,z^\alpha}{ 2^{k(1+\alpha)}}.$$
Hence, by Theorem~6.8
of~\cite{MR2675483}, for all~$x_1$, $y_1\in\left(-\frac12,\frac12\right)$,
\begin{equation}\label{Po0A23}
|\tilde u(x_1)-\tilde u(y_1)-\tilde u'(y_1)(x_1-y_1)|\le C\e_\star\,z^\alpha\,|x_1-y_1|^{1+\alpha},\end{equation}
for some~$C>0$. 

Now, we suppose that~$\bar x_1\le \bar y_1$ (the case~$\bar x_1> \bar y_1$
can be treated in a similar way). Then, we have that~$\bar x_1=z-d$ and~$\bar y_1=z+d$.
Hence, since~$\frac{d}{z}<\frac12$ thanks to~\eqref{CAS8123},
we are in the position of utilizing~\eqref{UTIL} and~\eqref{Po0A23}
(taking~$\ell_{\bar y_1}:=\tilde u'(y_1)$,
with~$x_1:=-\frac{d}{z}$ and~$y_1:=\frac{d}{z}$)
and conclude that
\begin{eqnarray*}&&
|u_\star(\bar x_1)-u_\star(\bar y_1)-\ell_{\bar y_1} (\bar x_1-\bar y_1)|=
|u_\star(z-d)-u_\star(z+d)+2\ell_{\bar y_1} d|\\&&\qquad
= \left|z\tilde u\left(-\frac{d}{z}\right)-z\tilde u\left(\frac{d}{z}\right)+2\ell_{\bar y_1} d\right|\le
C\e_\star\,z^{1+\alpha}\,\left(\frac{2d}z\right)^{1+\alpha}\le  4C\e_\star\,d^{1+\alpha}
.\end{eqnarray*}
Up to renaming~$C$,
this completes the proof of~\eqref{POBIEY}, as desired.
\end{proof}

\section{Boundary regularity, and proof of Theorem~\ref{BR}}\label{HAR5}

While Theorem~\ref{PROVV:Areg} has its own interest, since
it says that nonlocal minimal graphs which are not boundary discontinuous
are necessarily $C^{1,\alpha}$ up to the boundary,
for concrete applications of such a result it is essential
to improve the value of~$\alpha$ (provided that the exterior datum
is regular enough): in particular,
the H\"older exponent needed to pass the equation
to the limit at boundary points needs to be higher than~$s$,
while the one coming from Theorem~\ref{PROVV:Areg} happens
to be less than~$s$. {F}rom the technical point of view, this
is due to the possible growth of the solution at infinity and the long range
interactions of the problem.

Our objective is now to modify and extend some linearization methods
in~\cite{MR3331523}, combined with
some precise boundary asymptotics of fractional Laplace equations,
taking into account the additional boundary effects,
to improve the H\"older exponent in Theorem~\ref{PROVV:Areg}
and finally obtain Theorem~\ref{BR}. 

\begin{proof}[Proof of Theorem~\ref{BR}] 
We take~$\zeta\in C^\infty_0\left( \left[
-\frac{h}2,\frac{h}2\right],\,[0,1]\right)$ with~$\zeta=1$
in~$\left( -\frac{h}4,\frac{h}4\right)$,
and we define
$$ v(x_1):=\zeta(x_1)\,u(x_1) .$$
Using formula~(49) in~\cite{MR3331523}, and recalling the notation in~\eqref{DEF:F},
we have that, for all~$x_1\in(0,1)$,
\begin{equation}\label{EULAHD0} \int_{\R} F\left(\frac{u(x_1+ t)-u(x_1)}{|t|}
\right)\,\frac{dt}{|t|^{1+s}}=0,\end{equation}
in the principal value sense.

We also recall that~$u'(x_1)$ is well defined,
thanks to the interior regularity in~\cites{MR3090533},
and therefore, by odd symmetry, and using that~$F$ is bounded,
for any $J\subseteq\R$ which is symmetric with respect to the origin,
we obtain that
\begin{equation}\label{KS-OAKS92348}
\begin{split}&
\int_{J} F\left(\frac{u(x_1+ t)-u(x_1)}{|t|}
\right)\,\frac{dt}{|t|^{1+s}}\\
=\;& \int_{J} \left[ F\left(\frac{u(x_1+ t)-u(x_1)}{|t|}
\right) -F\left(\frac{u'(x_1)\,t}{|t|}
\right)\right]\,\frac{dt}{|t|^{1+s}}
\\=\;&
\int_{J} \Big(u(x_1+ t)-u(x_1)- u'(x_1)\,t\Big)
\,\frac{a(x_1,t)\,dt}{|t|^{2+s}}\\=\;&
\int_{J} \delta(x_1,t)\,\frac{a(x_1,t)\,dt}{|t|^{2+s}}
,\end{split}\end{equation}
where
\begin{eqnarray*}&&a(x_1,t):=\int_0^1
F'\left(\frac{\tau \big(u(x_1+ t)-u(x_1)\big)+ (1-\tau)u'(x_1)\,t}{|t|}
\right)\,d\tau\\{\mbox{and }}&&
\delta(x_1,t):=u(x_1+ t)-u(x_1)- u'(x_1)\,t.\end{eqnarray*}
In particular, choosing~$J:=\R$, we deduce from~\eqref{EULAHD0} and~\eqref{KS-OAKS92348} that
$$ \int_{\R} \delta(x_1,t)\,\frac{a(x_1,t)\,dt}{|t|^{2+s}}=0.$$
As a consequence, if~$\sigma:=\frac{1+s}{2}$,
$I:=\left(-\frac{h}4,\frac{h}4\right)$, and~$x_1\in\left(0,\frac{h}{4}\right)$,
\begin{equation}\label{LEQUASIGA}
\begin{split}&
-F'\big( u'(x_1)\big)\,
(-\Delta)^\sigma v(x_1)\\=\;&F'\big( u'(x_1)\big)\,
\int_{\R} \Big(v(x_1+ t)-v(x_1)-v'(x_1)\,t\Big)
\,\frac{ dt}{|t|^{2+s}}\\
=\;&F'\big( u'(x_1)\big)\,\int_{I} \Big(u(x_1+ t)-u(x_1)-u'(x_1)\,t\Big)
\,\frac{ dt}{|t|^{2+s}}\\&\qquad\qquad+F'\big( u'(x_1)\big)\,
\int_{\R\setminus I} \Big(v(x_1+ t)-u(x_1)-u'(x_1)\,t\Big)
\,\frac{ dt}{|t|^{2+s}}\\
=\;&\int_{I} \delta(x_1,t)\,\frac{a(x_1,t)\,dt}{|t|^{2+s}}\,dt
+
\int_{I} 
\delta(x_1,t)\,\frac{F'\big( u'(x_1)\big)-a(x_1,t)}{|t|^{2+s}}\,dt
\\&\qquad\qquad+F'\big( u'(x_1)\big)\,
\int_{\R\setminus I} \Big(v(x_1+ t)-u(x_1)-u'(x_1)\,t\Big)
\,\frac{ dt}{|t|^{2+s}}\\
=\;&f(x_1),
\end{split}
\end{equation}
where
\begin{equation}\label{CHA823r44rfr}
\begin{split}&
f_1(x_1):=-\int_{\R\setminus I} \delta(x_1,t)\,\frac{a(x_1,t)\,dt}{|t|^{2+s}},\\
& f_2(x_1):=\int_{I} 
\delta(x_1,t)\,\frac{F'\big( u'(x_1)\big)-a(x_1,t)}{|t|^{2+s}}\,dt\\
& f_3(x_1):=F'\big( u'(x_1)\big)\,
\int_{\R\setminus I} \Big(v(x_1+ t)-u(x_1)-u'(x_1)\,t\Big)
\,\frac{ dt}{|t|^{2+s}},\\
{\mbox{and }}\;&f(x_1):=f_1(x_1)+f_2(x_1)+f_3(x_1).
\end{split}\end{equation} 
Now we use the following notation: given~$r\in\R$ we denote by~$\underline{r}$
a real number strictly smaller than~$r$ which can be taken as close as we want to~$r$.
For instance, the thesis in Theorem~\ref{PROVV:Areg}
can be written as~$u\in C^{1,\underline{s}}([0,1/2])$,
with continuity of the derivative
of~$u$ at the origin with respect to the external datum.
We claim that if~$u\in L^\infty\left([0,h]\right)\cap
C^{1,\alphyy}\left(\left[0,\frac{h}{2}\right]\right)$ for some
\begin{equation}\label{RESTRIZ}
\alphyy\in\left(\frac{s}2,1\right)\setminus\{s\},\end{equation}
then
\begin{equation}\label{BSC}\begin{split}
&f\in C^{\kappa(\alphyy)}\left(\left[0,\frac{h}{4}\right]\right), {\mbox{ with }}
\| f\|_{ C^{\kappa(\alphyy)}\left(\left[0,\frac{h}{4}\right]\right) }\le C_{\alphyy,s},\\
{\mbox{where }}\;&\kappa(\alphyy):=\min\{ {\alphyy},
2\alphyy-s\},
\end{split}
\end{equation}
for some~$C_{\alphyy,s}>0$ depending only on~$s$, $\alphyy$,
$h$, and~$\|u\|_{C^{1,\alphyy}\left(\left[0,\frac{h}{2}\right]\right)}$.

The proof of~\eqref{BSC} is rather complicated and it is based
on a long and delicate computation. Not to interrupt the flow
of ideas, we postpone this proof to Appendix~\ref{TED}.

We now define
$$ f_\star(x_1):=-\frac{f(x_1)}{F'(u(x_1))},$$
and we deduce from~\eqref{BSC} that
\begin{equation}\label{SERstar} f_\star\in C^{\kappa(\alphyy)}\left(\left[0,\frac{h}{4}\right]\right).\end{equation}
Now we recall that~$v$ belongs to~$C^{1,\betxxalf}\left( [-h,0]\right)$,
and also to~$C^\infty\left( \left( \frac{h}{4},2h\right)\right)$,
thanks to the regularity of~$u$ established in~\cite{MR3090533, MR3331523}.
Therefore,
we can
take a function
\begin{equation}\label{DEFINQUOVTAI}
\tilde v\in C^{1,\betxxalf}\left( \left[-\frac{h}2,\frac{h}2\right]\right)\end{equation}
such that~$\tilde v= v$ outside~$\left(0,\frac{h}4\right)$.

We observe that, by construction, $\tilde v(0)=v(0)=u(0)=0$
and~$\tilde v'(0)=v'(0)=u'(0)$. Hence, as~$|x_1|\to0$,
\begin{equation}\label{MAndfiqwgfu834}
\tilde v(x_1)=u'(0)x_1+O(|x_1|^{1+\betxxalf}).
\end{equation}
Furthermore, by~\eqref{DEFINQUOVTAI}, see e.g. Proposition~2.1.8
in~\cite{MR2707618}, we have that~$g_\star:=(-\Delta)^\sigma \tilde v\in
C^{1+\betxxalf-2\sigma}\left(\left[0,\frac{h}4\right]\right)$.
Hence, setting~$h_\star:=f_\star-g_\star$, we deduce from~\eqref{SERstar}
that
\begin{equation}\label{SERstar-ira} h_\star\in C^{\vartheta}
\left(\left[0,\frac{h}{4}\right]\right),\qquad{\mbox{
with }}\vartheta:=\min\{\sigma,\,\kappa(\alphyy),\,1+\betxxalf-2\sigma\}.\end{equation}
Notice that~$\vartheta\in(0,1)$, since~$\betxxalf>s$.

Let also~$W:= v-\tilde v$ and~$\tilde W(x_1):=x_1^{-\sigma} W(x_1)$.
By~\eqref{LEQUASIGA}, we see that
\begin{equation*} \begin{cases}
(-\Delta)^\sigma W=h_\star & {\mbox{ in }}\left(0,\displaystyle\frac{h}4\right),\\
W=0 & {\mbox{ in }}\R\setminus\left(0,\displaystyle\frac{h}4\right).
\end{cases}\end{equation*}
Hence, using~\eqref{SERstar-ira} and Lemma~\ref{CLAUDIA2},
we deduce that
\begin{equation}\label{TROP} v(x_1)-\tilde v(x_1)=W(x_1)=x_1^{\sigma}\tilde W(x_1)=
x_1^{\sigma} \Big(a_0+ O\big(x_1^{\mu}\big)\Big)
\end{equation}
as~$x_1\searrow0$,
for a suitable~$a_0\in\R$, with
$$ \mu:=\min\{1,\,{\underline\vartheta+\sigma}\}=
\min\{1,\,\underline{2\sigma},\,\kappa(\alphyy)+\underline{\sigma}
,\,1+\underline{\betxxalf}-\sigma\}.$$
On the other hand, by \eqref{MAndfiqwgfu834} and
Theorem~\ref{PROVV:Areg}, we know that
\begin{equation}\label{Xcsv73-923948-duqu} v(x_1)-\tilde v(x_1)=u(x_1)-u'(0)x_1+O(x_1^{1+\betxxalf}) =
O(x_1^{1+\alphyy}),\end{equation}
as~$x_1\searrow0$.
Combining this and~\eqref{TROP}, we conclude that
\begin{eqnarray*}a_0=
\lim_{x_1\searrow0} a_0+ O\big(x_1^{\mu}\big)=
\lim_{x_1\searrow0}
x_1^{-\sigma}\Big(v(x_1)-\tilde v(x_1)\Big)=
\lim_{x_1\searrow0} O(x_1^{1+\alphyy-\sigma})=0.
\end{eqnarray*}
Therefore, we can write~\eqref{TROP} as~$W(x_1)=O\big(x_1^{\mu+\sigma}\big)$,
and then exploit Lemma~\ref{REGONIOA} to deduce that~$W\in C^{\mu+\sigma}\left(\left[0,\frac{h}{8}\right]\right)$.
As a consequence, recalling~\eqref{DEFINQUOVTAI},
we find that~$v=W+\tilde v\in
C^{\xi(\alphyy)}\left(\left[0,\frac{h}{8}\right]\right)$,
with
$$ \xi(\alphyy):=\min\{ \mu+\sigma, \,1+\betxxalf\}=
\min\left\{\frac{3+s}{2},\,
1+\underline\betxxalf,\,\kappa(\alphyy)+1+\underline{s}\right\}\in(1+s,2).
$$
This gives that
\begin{equation}\label{BBOTAL}
u\in C^{1,\xi(\alphyy)-1}\left(\left[0,\frac{h}{2}\right]\right).
\end{equation}
We can now bootstrap this procedure till we reach an almost optimal
H\"older exponent for~$u'$. Namely, in view of Theorem~\ref{PROVV:Areg},
we know that~$u\in C^{1,\alpha_1}\left(\left[0,\frac{h}{2}\right]\right)$,
with~$\alpha_1:=\underline{s}$. Then, by~\eqref{BBOTAL}, we obtain that~$u\in C^{1,\alpha_2}\left(\left[0,\frac{h}{2}\right]\right)$,
with~$\alpha_2:=\min\left\{\frac{1+s}{2},\,
\betxxalf,\,\kappa(\alpha_1)+s\right\}$.

Iterating this, for all~$k\in\N$ with~$k\ge2$, we conclude that
\begin{equation}\label{56572394}
u\in C^{1,\alpha_k}\left(\left[0,\frac{h}{2}\right]\right),\qquad{\mbox{
with }}\qquad \alpha_k:=\min\left\{\frac{1+s}{2},\,
\betxxalf,\,\kappa(\alpha_{k-1})+s\right\}.\end{equation}
We claim that there exists~$k\in\N$, $k\le 4+\frac1s$, such that
\begin{equation}\label{SOtgahsu33-2}
\kappa(\alpha_{k-1})+s\ge \min\left\{\frac{1+s}{2},\,
\betxxalf\right\}
.\end{equation}
Indeed, if not, we have that
\begin{equation}\label{SOtgahsu33}
\kappa(\alpha_{k-1})+s< \min\left\{\frac{1+s}{2},\,
\betxxalf\right\}
,\end{equation}
for all~$k\le4+\frac1s$.
Accordingly,
we obtain that~$\alpha_k=\kappa(\alpha_{k-1})+s$,
for all~$k\le 4+\frac1s$.
In addition, by~\eqref{56572394}, we have that~$\alpha_k> s$
for all~$k\ge2$, and therefore, by~\eqref{BSC}, if~$k\ge3$,
$$ \kappa(\alpha_{k-1})=\min\{ {\alpha_{k-1}},
2\alpha_{k-1}-s\}= {\alpha_{k-1}}.$$
Hence, for all $k\in \N\cap[3,4+\frac1s]$,
and all~$j\in\N $, with~$j\le k-2$,
$$ \alpha_k= {\alpha_{k-1}}+s\ge \alpha_{k-1}+\frac{s}{2}\ge\dots\ge
\alpha_{k-j}+\frac{sj}{2}.$$
In particular, taking~$k\in \left( 3+\frac1s,4+\frac1s\right]$
$$ \alpha_k\ge
\alpha_{2}+\frac{s(k-2)}{2}\ge\frac{s(k-2)}{2}\ge\frac{1+s}{2},$$
which is in contradiction with~\eqref{SOtgahsu33}, and so it proves~\eqref{SOtgahsu33-2}.

Hence, by~\eqref{56572394} and~\eqref{SOtgahsu33-2},
\begin{equation*}
u\in C^{1,\gamma}\left(\left[0,\frac{h}{2}\right]\right),\qquad{\mbox{
with }}\qquad \gamma:=\min\left\{\frac{1+s}{2},\,
\betxxalf\right\}.\end{equation*}
This completes the proof of Theorem~\ref{BR}.
\end{proof}

\section{Boundary validity of the nonlocal curvature equation,
and proof of Theorem~\ref{ELBOU}}\label{SDRA6}

With the previous work, we are now ready to obtain that the Euler-Lagrange
equation is satisfied pointwise at all points which are accessible from the interior
(being evidently false elsewhere).

\begin{proof}[Proof of Theorem~\ref{ELBOU}]
For interior points, that is when~$x\in(\partial E)\cap((0,1)\times\R)$,
in view of~\cite{MR2675483},
we know that~\eqref{USEF} is satisfied in the viscosity sense.
But since in this case~$(\partial E)\cap((0,1)\times\R)$
is locally a $C^\infty$ set, thanks to~\cites{MR3090533, MR3331523},
we conclude that~\eqref{USEF} is also satisfied in the
pointwise sense at every point of~$x\in(\partial E)\cap((0,1)\times\R)$.

Hence, to complete the proof of Theorem~\ref{ELBOU},
we only have to take into account the boundary points,
i.e., the points which lie on the boundary of the slab and can be reached
by points lying in~$(\partial E)\cap((0,1)\times\R)$.

For this, we distinguish two cases. If~$u$ is discontinuous at the boundary point, the result follows
by suitably applying the results in~\cite{MR3532394},
see e.g. Theorem~B.9 in~\cite{CLAUDIALUCA} for a detailed statement.

If instead~$u$ is continuous at the boundary point, we first write~\eqref{USEF}
at interior points. That is, using the notation in~\eqref{DEF:F}
and recalling
formula~(49) in~\cite{MR3331523}, we have that
\begin{equation}\label{EL}\begin{split}&\int_{\R}
\left[ F\left(\frac{u(x_1+w)-u(x_1)}{|w|} \right)
-F\left(\frac{u'(x_1)\,w}{|w|} \right)
\right]\,\frac{dw}{|w|^{1+s}}
\\&\qquad=
\int_{\R} F\left(\frac{u(x_1+w)-u(x_1)}{|w|} \right)\,\frac{dw}{|w|^{1+s}}=0,
\end{split}\end{equation}
for all~$x_1\in (0,1)$.

We also observe that, for small~$x_1>0$,
\begin{equation}\label{DCS} \begin{split}&
\left[ F\left(\frac{u(x_1+w)-u(x_1)}{|w|} \right)
-F\left(\frac{u'(x_1)\,w}{|w|} \right)
\right]\,\frac{1}{|w|^{1+s}}\\&\qquad\qquad\le C\,
\min\left\{\frac{1}{|w|^{1+s}}
,\,\frac{1}{|w|^{1+s-\min\left\{ \betxxalf,\frac{1+s}2\right\} }}\right\}
,\end{split}\end{equation}
for some~$C>0$,
thanks to Theorem~\ref{BR}, and the function on the right hand side of~\eqref{DCS}
belongs to~$L^1(\R)$.

As a consequence, 
we obtain the desired claim in~\eqref{USEF}
taking the limit in~\eqref{EL},
by the Dominated
Convergence Theorem.
\end{proof}

\section{Genericity of the stickiness phenomenon,
and proof of Theorem~\ref{GENER}}\label{SDRA7}

Now we are ready to prove our main result concerning the genericity
of boundary discontinuities for nonlocal minimal graphs in the plane.

\begin{proof}[Proof of Theorem~\ref{GENER}] First of all, we observe that, for all~$t\ge0$ and~$x_1\in\R$,
\begin{equation}\label{E1}
u(x_1,t)\ge u(x_1,0).
\end{equation}
To prove this, we recall
Lemma~3.3 in~\cite{MR3516886}, according to which
$$ (0,1)\times(-\infty, -M_t)\subseteq
E_t\cap\Omega\subseteq (0,1)\times(-\infty, M_t),$$
for some~$M_t>0$. As a consequence,
if~$\tau>M_t+M_0$, we have that
$$ E_t+\tau e_2 \supset E_0,$$
where, as customary, we set~$e_2:=(0,1)$.

Thus, we reduce $\tau$ till the first contact between the two sets.
Since $\varphi$ is nonnegative and supported away from~$(-d,1+d)$, we have that
this contact~$\tau$ is necessarily nonnegative.
We show that~$\tau=0$, which in turn proves~\eqref{E1}. Indeed, if~$\tau>0$,
the touching point~$p$
between~$E_t+\tau e_2$ and~$E_0$ must necessarily occur in~$\overline\Omega$.
As a consequence
\begin{eqnarray*}&& 0=H^s_{E_t+\tau e_2}(p)=\int_{\R^2}\frac{
\chi_{\R^2\setminus(E_t+\tau e_2)}(y)-\chi_{E_t+\tau e_2}(y)}{|p-y|^{2+s}}\,dy\\&&\qquad
\le \int_{\R^2}\frac{
\chi_{\R^2\setminus E_0}(y)-\chi_{E_0}(y)}{|p-y|^{2+s}}\,dy=H^s_{E_0}(p)=0.\end{eqnarray*}
In particular, this says that~$E_t+\tau e_2$ and~$E_0$ must coincide,
which is impossible since~$\varphi$ does not vanish identically.
This proves~\eqref{E1}. 

Now, we focus on the proof of~\eqref{TB}.
To prove~\eqref{TB}, we assume the converse inequality and we show that~$\varphi$ vanishes
identically, which is against our assumption.

More precisely, if~\eqref{TB} were not true, then there would be~$T>0$
such that for all~$t\in[0,T]$
$$ \limsup_{{x_1\searrow0}} u(x_1,t)\le v(0).$$
Hence, by~\eqref{TB0} and~\eqref{E1},
$$ \limsup_{{x_1\searrow0}} u(x_1,t)=v(0).$$
Then, by Theorem~\ref{ELBOU}, we would find that
\begin{equation*}
\int_{\R} F\left(\frac{u(w,t)-v(0)}{|w|} \right)\,\frac{dw}{|w|^{1+s}}=0,
\end{equation*}
for all~$t\in[0,T]$, and therefore
\begin{equation}\label{DC1} \int_{\R} \left[ F\left(\frac{u(w,t)-v(0)}{|w|} \right)-
F\left(\frac{u(w,0)-v(0)}{|w|} \right)\right]\,\frac{dw}{|w|^{1+s}}=0.\end{equation}
On the other hand,
recalling~\eqref{DEF:F}, we notice that~$F$ is strictly increasing.
Consequently, by~\eqref{E1}, we would find that
\begin{equation}\label{DC2} F\left(\frac{u(w,t)-v(0)}{|w|} \right)-
F\left(\frac{u(w,0)-v(0)}{|w|} \right)\ge0,\end{equation}
with strict inequality unless
\begin{equation}\label{DC3}
{\mbox{$u(w,t)=u(w,0)$ for all~$w\in\R$.
}}\end{equation}
In particular, since the strict inequality in~\eqref{DC2} is excluded by~\eqref{DC1},
we obtain that~\eqref{DC3} is necessarily satisfied, for all~$t\in[0,T]$.

Taking~$w$ outside~$(0,1)$, we would thus conclude that
$$ v(w)+t\varphi(w)=u(w,t)=u(w,0)=v(w),$$
for all~$t\in[0,T]$,
which would give that~$\varphi$ vanishes identically, against our assumption.
\end{proof}

\begin{appendix}

\section{Blow-up methods and proofs of the technical statements in Section~\ref{SE:1}}\label{TECH}

This appendix contains the proofs of the auxiliary results stated in Section~\ref{SE:1}.
The details of this proofs are not easily accessible in the literature,
since they deal with the newly explored setting of boundary blow-up methods
for nonlocal minimal surfaces, but we postponed these technical
arguments not to interrupt the main line of reasoning.

\begin{proof}[Proof of Lemma~\ref{primo blow up}] We observe that~$E\setminus\Omega$
is a Lipschitz set in~$B_{r_0}$, therefore, for all~$r\in(0,r_0)$,
\begin{equation}\label{GHAs1}
\begin{split}
&\iint_{(\Omega\cap B_r)\times(\Omega^c\cap B_r)}\frac{dx\,dy}{|x-y|^{2+s}}\le Cr^{2-s}
\\{\mbox{and}}\qquad&
\iint_{(E\cap\Omega^c\cap B_r)\times(E^c\cap\Omega^c\cap B_r)}\frac{dx\,dy}{|x-y|^{2+s}}\le Cr^{2-s}
,\end{split}\end{equation}
for some~$C>0$. 

Also, for all~$r\in(0,r_0)$ we have that the set~$G_r:=E\setminus(B_r\cap\Omega)$
coincides with~$E$ outside~$\Omega$ and therefore~${\rm Per}_s(E,\Omega)\le
{\rm Per}_s(G_r,\Omega)$, which gives that
\begin{equation*}
\iint_{(E\cap B_r\cap\Omega)\times E^c}\frac{dx\,dy}{|x-y|^{2+s}}\le 
\iint_{(E\cap B_r\cap\Omega)\times G_r}\frac{dx\,dy}{|x-y|^{2+s}}.
\end{equation*}
As a consequence,
\begin{equation}\label{GHAs2}
\begin{split}&
\iint_{(E\cap B_r\cap\Omega)\times E^c}\frac{dx\,dy}{|x-y|^{2+s}}\\
\le \;&
\iint_{(E\cap B_r\cap\Omega)\times (\Omega^c\cap B_r)}\frac{dx\,dy}{|x-y|^{2+s}}
+\iint_{(E\cap B_r\cap\Omega)\times B_r^c}\frac{dx\,dy}{|x-y|^{2+s}}\\
\le \;&
\iint_{(\Omega\cap B_r)\times (\Omega^c\cap B_r)}\frac{dx\,dy}{|x-y|^{2+s}}
+\iint_{B_r\times B_r^c}\frac{dx\,dy}{|x-y|^{2+s}}\\ \le\;&Cr^{2-s},
\end{split}
\end{equation}
where~\eqref{GHAs1} has been used in the last step, and~$C>0$ has been renamed.

In addition,
\begin{eqnarray*}
\psi(r)&:=&\iint_{(E\cap B_r)\times (E^c\cap B_r)}\frac{dx\,dy}{|x-y|^{2+s}}\\
&=&
\iint_{(E\cap B_r\cap\Omega)\times (E^c\cap B_r\cap\Omega)}\frac{dx\,dy}{|x-y|^{2+s}}
+\iint_{(E\cap B_r\cap\Omega)\times (E^c\cap B_r\cap\Omega^c)}\frac{dx\,dy}{|x-y|^{2+s}}
\\&&+\iint_{(E\cap B_r\cap\Omega^c)\times (E^c\cap B_r\cap\Omega)}\frac{dx\,dy}{|x-y|^{2+s}}
+\iint_{(E\cap B_r\cap\Omega^c)\times (E^c\cap B_r\cap\Omega^c)}\frac{dx\,dy}{|x-y|^{2+s}}\\
&\le&
\iint_{(E\cap B_r\cap\Omega)\times E^c}\frac{dx\,dy}{|x-y|^{2+s}}
+2\iint_{(\Omega\cap B_r)\times (\Omega^c\cap B_r)}\frac{dx\,dy}{|x-y|^{2+s}}
\\&&+\iint_{(E\cap B_r\cap\Omega^c)\times (E^c\cap B_r\cap\Omega^c)}\frac{dx\,dy}{|x-y|^{2+s}}.
\end{eqnarray*}
Combining this inequality with~\eqref{GHAs1} and~\eqref{GHAs2}, we conclude that
\begin{equation}\label{GHAs3}
\psi(r)\le Cr^{2-s},
\end{equation}
up to renaming~$C>0$.

Now we take~$R>0$ and we observe that
\begin{eqnarray*}&&
\frac12\iint_{B_R\times B_R}\frac{|\chi_{E_k}(x)-\chi_{E_k}(y)|^2}{|x-y|^{2+s}}\,dx\,dy
=\iint_{(E_k\cap B_R)\times(E_k^c\cap B_R)}\frac{dx\,dy}{|x-y|^{2+s}}\\&&\qquad
=k^{2-s}\iint_{(E\cap B_{R/k})\times(E^c\cap B_{R/k})}\frac{dx\,dy}{|x-y|^{2+s}}=
k^{2-s}\,\psi(R/k).
\end{eqnarray*}
Consequently, for $k$ sufficiently large, recalling~\eqref{GHAs3} we obtain that
$$ \frac12\iint_{B_R\times B_R}\frac{|\chi_{E_k}(x)-\chi_{E_k}(y)|^2}{|x-y|^{2+s}}\,dx\,dy\le
CR^{2-s}.$$
Hence, by fractional Sobolev embeddings, we conclude that, up to a subsequence,
$\chi_{E_k}$ converges to some function~$g$ in~$L^1_{\rm loc}(\R^2)$
and also a.e.; in this way, since~$\chi_{E_k}(x)\in\{0,1\}$ for all~$x\in\R^2$,
at each point~$p$ for which
$$ g(p)=\lim_{k\to+\infty}\chi_{E_k}(p),$$
we obtain that~$g(p)\in\{0,1\}$. In particular, $g$ takes value in~$\{0,1\}$, up to
a negligible set, and therefore we can define~$E_0:=\{g=1\}$ and
obtain~\eqref{BL1}, as desired.

Now we prove~\eqref{BL2}. For this, we consider a ball~$B_r(p)\Subset\{x_1>0\}$.
We observe that, if~$k$ is large enough,
\begin{equation*}
B_r(p)\subseteq\Omega_k:=k\Omega=(0,k)\times\R.
\end{equation*}
and hence the set~$E_k$
is $s$-minimal in~$B_r(p)$.
Then, from \eqref{BL1} and
Theorem~3.3 in~\cite{MR2675483} we obtain
that~$E_0$ is $s$-minimal in~$B_r(p)$, which establishes~\eqref{BL2}.

Now we prove~\eqref{BL3}. For this, we take~$Q=(Q_1,Q_2)\in E_0\cap\{x_1<0\}$
(the case~$Q\in E_0^c\cap\{x_1<0\}$ can be treated similarly), and 
we show that
\begin{equation}\label{PoA}
Q_2\le v'(0)\,Q_1.
\end{equation}
We have that~$\frac{Q}{k}\in\Omega^c$. Also, by~\eqref{BL1},
up to negligible sets, we can assume that
$$ 1=\chi_{E_0}(Q)=\lim_{k\to+\infty}\chi_{E_k}(Q),$$
hence~$\chi_{E_k}(Q)=1$ if~$k$ is large enough, and accordingly~$\frac{Q}{k}\in E$.
This gives that
$$ \frac{Q_2}{k}<v\left(\frac{Q_1}k\right)=
v\left(\frac{Q_1}k\right)-v(0).$$
Multiplying this inequality by~$k$ and taking the limit as~$k\to+\infty$,
we obtain~\eqref{PoA}.

By~\eqref{PoA} we have obtained that
$$ E_0\cap\{x_1<0\}\subseteq\left\{ x_2\le v'(0)\,x_1\right\},$$
and, similarly, that
$$ E_0\cap\{x_1<0\}\supseteq\left\{ x_2\ge v'(0)\,x_1\right\},$$
and these two inclusions give~\eqref{BL3}
(up to negligible sets).
\end{proof}

\begin{proof}[Proof of Lemma~\ref{secondo blow up}] We can apply Lemma~\ref{primo blow up}
to the set~$E_0$. In this case, the function~$v(x_1)$ can be replaced
by the function~$v'(0)\,x_1$.
Then, the claims in~\eqref{BL1-0}, \eqref{BL2-0} and~\eqref{BL3-0}
plainly follow, respectively, from~\eqref{BL1}, \eqref{BL2} and~\eqref{BL3}.

It remains to show that~$E_{00}$ is a cone. For this,
we observe that the set~$E_0\cap\{x_1<0\}$ is preserved under dilations.
As a consequence, the monotonicity formula of Theorem~8.1 in~\cite{MR2675483}
can be applied to~$E_0$ (even if the center
lies in this case on the boundary of the domain,
since the same proof in~\cite{MR2675483}
would work once the data outside the domain are invariant under dilations).
As a consequence, $E_{00}$ is necessarily a cone, in view of
Theorem~9.2 in~\cite{MR2675483}.
\end{proof}

\begin{proof}[Proof of Lemma~\ref{Sempli}] Let~$R>0$ and~$j\in\N$. By~\eqref{BL1-0}, there exists~$k'_j\in\N$
such that~$k'_j\ge j$ and
$$ \int_{B_R}|\chi_{E_{0k'_j}}(x)- \chi_{E_{00}}(x)|\,dx\le\frac1{j}.$$
Then, by~\eqref{BL1}, we have that
there exists~$k''_j\in\N$
such that~$k''_j\ge j$ and
$$ \int_{B_{R/k'_j}} |\chi_{E_{k''_j}}(y)-\chi_{E_0}(y)|\,dy\le \frac{1}{(k'_j)^2\;j}.$$
Consequently, letting~$k_j:=k'_j k''_j$ and
using the change of variable~$y:=x/k'_j$,
\begin{eqnarray*}&&
\int_{B_R}|\chi_{E_{k_j}}(x)- \chi_{E_{00}}(x)|\,dx \\&\le&
\int_{B_R}|\chi_{ k'_j E_{k''_j} }(x)- \chi_{ k'_jE_{0} }(x)|\,dx+
\int_{B_R}|\chi_{k'_jE_0}(x)- \chi_{E_{00}}(x)|\,dx\\&=& (k'_j)^2
\int_{B_{R/k'_j}}|\chi_{ E_{k''_j} }(y)- \chi_{ E_{0} }(y)|\,dy+
\int_{B_R}|\chi_{E_{0k'_j}}(x)- \chi_{E_{00}}(x)|\,dx\\&\le&\frac2j,
\end{eqnarray*}
which gives the desired result.
\end{proof}

\section{Some remarks about linear fractional equations}\label{LIN}

In this appendix we collect a number of auxiliary
results for the solutions of linear fractional equations.
They are probably not completely new in the literature,
since some of them may follow from more general results in~\cites{MR3168912,MR3276603,MR3831283}.
For completeness, we give here precise statements and
self-contained elementary proofs of the results needed in our specialized framework.

To start with, we recall a simple property of~$\sigma$-harmonic
functions with respect to their Dirichlet data. In our framework,
this property will be useful to detect the possible behavior of
nonlocal minimal surfaces at the boundary and distinguish between
sticky and non-sticky points, since we will reduce this alternative
to the analysis of the first nontrivial term of the vertical rescaling
of the solution. The technical and basic result that we exploit is
the following:

\begin{lemma}\label{CLAUDIA}
Let~$\sigma\in(0,1)$, $\alpha\in[0,2\sigma)$ and~$g:\R\to\R$
be such that~$|g(x_1)|\le |x_1|^\alpha$. Let~$u:\R\to\R$ be a solution of
$$ \begin{cases}
(-\Delta)^\sigma u=0 & {\mbox{ in }}(0,1),\\
u=0& {\mbox{ in }}(-\infty,0],\\
u=g&{\mbox{ in }}[1,+\infty).
\end{cases}$$
Then, as~$x_1\searrow0$,
$$ u(x_1)= \bar a x_1^\sigma +O(x_1^{1+\sigma}),$$
for some~$\bar a\in\R$.
\end{lemma}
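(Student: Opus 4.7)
The plan is to represent $u$ via the explicit Poisson kernel of $(0,1)$ for $(-\Delta)^\sigma$ and then Taylor expand this kernel in $x_1$ near $0$. Recall that, for $x_1\in(0,1)$ and $y\in\R\setminus[0,1]$, the Blumenthal--Getoor--Ray Poisson formula (specialized to the interval $(0,1)$, viewed as the one-dimensional ball $B_{1/2}(1/2)$) reads
$$P_{(0,1)}(x_1,y)=\frac{\sin(\pi\sigma)}{\pi}\left(\frac{x_1(1-x_1)}{|y|\,|y-1|}\right)^{\sigma}\frac{1}{|y-x_1|}.$$
Since $u$ vanishes on $(-\infty,0]$, agrees with $g$ on $[1,+\infty)$, and the growth $|g(y)|\le|y|^\alpha$ with $\alpha<2\sigma$ keeps the exterior datum strictly below the Liouville threshold $|y|^{2\sigma}$ at infinity (giving uniqueness of the Dirichlet problem), one obtains the representation
$$u(x_1)=\frac{\sin(\pi\sigma)}{\pi}\,x_1^\sigma(1-x_1)^\sigma\int_1^{+\infty}\frac{g(y)}{[y(y-1)]^\sigma\,(y-x_1)}\,dy\qquad\text{for }x_1\in(0,1).$$

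The next step is to Taylor-expand the $x_1$-dependent factors: for $x_1\in(0,1/2)$ and $y\ge 1$,
$$(1-x_1)^\sigma=1-\sigma x_1+O(x_1^2)\quad\text{and}\quad \frac{1}{y-x_1}=\frac{1}{y}+\frac{x_1}{y^2}+O\!\left(\frac{x_1^2}{y^3}\right),$$
whose product yields
$$\frac{(1-x_1)^\sigma}{y-x_1}=\frac{1}{y}+x_1\left(\frac{1}{y^2}-\frac{\sigma}{y}\right)+x_1^2\,\rho(x_1,y),$$
with $|\rho(x_1,y)|\le C/y$ uniformly in $x_1\in(0,1/2)$ and $y\ge 1$. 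Inserting this back into the representation and letting
$$\bar{a}:=\frac{\sin(\pi\sigma)}{\pi}\int_1^{+\infty}\frac{g(y)\,dy}{y\,[y(y-1)]^\sigma},$$
I would obtain
$$u(x_1)=\bar{a}\,x_1^\sigma+\bar{b}\,x_1^{1+\sigma}+x_1^{2+\sigma}\,R(x_1),$$
for an explicit constant $\bar{b}\in\R$ and a bounded remainder $R(x_1)$.

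Finally, I would verify that every integral in the expansion converges under the hypothesis $\alpha<2\sigma$. For $\bar{a}$, the integrand behaves like $|y|^{\alpha-1-2\sigma}$ at infinity, with $\alpha-1-2\sigma<-1$, and exhibits only the integrable singularity $(y-1)^{-\sigma}$ near $y=1$; the integrand defining $\bar{b}$ decays one order faster, and the remainder integral inherits uniform convergence from the pointwise bound on $\rho$. Since $x_1^{2+\sigma}R(x_1)$ is subsumed into $O(x_1^{1+\sigma})$, the claimed asymptotic $u(x_1)=\bar{a}\,x_1^\sigma+O(x_1^{1+\sigma})$ follows. The main delicate point in the plan is to justify the Poisson integral representation for $u$ when the exterior datum $g$ is polynomially growing rather than bounded: this is exactly where the bound $\alpha<2\sigma$ is used essentially, not merely for integrability of the coefficients, and it will require either an approximation argument truncating $g$ at level $R\to\infty$ and passing to the limit via the uniqueness of decaying-at-the-appropriate-rate $\sigma$-harmonic functions, or direct verification that the Poisson integral of $g$ solves the given Dirichlet problem in a suitable weak sense.
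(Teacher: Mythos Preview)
Your approach is correct and essentially the same as the paper's: both use the explicit Poisson kernel for the interval, factor out $x_1^\sigma$, and observe that the remaining integral is smooth in $x_1$ near $0$. The paper's proof is terser---it just writes the Poisson representation, pulls out the factor $x_1^\sigma$, and says ``which gives the desired result''---whereas you carry out the Taylor expansion explicitly and flag the justification of the Poisson representation for polynomially growing data, which the paper simply cites.
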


\begin{proof} We give a quick and self-contained proof
(more general arguments can be found in~\cites{MR3168912, MR3276603},
see also Theorem~6 in~\cite{MR3831283}).
By the  Poisson kernel representation (see e.g.
Theorem~2.10 in~\cite{MR3461641}), we can write, up to normalizing constants,
$$ u(x_1)=\int_1^{+\infty}\left( \frac{1-|x_1-1|^2}{|y|^2-1}\right)^\sigma
\frac{g(y)}{|(x_1-1)-y|}\,dy= x_1^\sigma\,
\int_1^{+\infty}\left( \frac{2-x_1}{|y|^2-1}\right)^\sigma
\frac{g(y)}{|1+y-x_1|}\,dy,$$
which gives the desired result.
\end{proof}

In the proof of Theorem~\ref{BR}, we also need
a sharp boundary regularity result for the fractional Laplacian,
that we state as follows:

\begin{lemma}\label{CLAUDIA2}
Let~$\sigma\in\left(\frac12,1\right)$, $\vartheta\in(0,\sigma]$, $\theta\in(0,\vartheta)$ and~$h\in C^\vartheta([0,2])$.
Let~$u:\R\to\R$ be a solution of
\begin{equation}\label{GREENA} \begin{cases}
(-\Delta)^\sigma u=h & {\mbox{ in }}(0,2),\\
u=0& {\mbox{ in }}\R\setminus(0,2).
\end{cases}\end{equation}
Then, as~$x_1\searrow0$,
\begin{equation}\label{7232340sne} u(x_1)= \bar a x_1^\sigma +O(x_1^{\mu+\sigma}),\end{equation}
for some~$\bar a\in\R$, with~$\mu:=\min\{1,\sigma+\theta\}$.
\end{lemma}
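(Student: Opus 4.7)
My approach is based on the explicit Green's function for the Dirichlet $(-\Delta)^\sigma$ on the interval $(0,2)$, combined with a careful use of the H\"older regularity of $h$ near the origin. In one dimension, Boggio's formula gives
$$G(x,y)=\frac{\kappa}{|x-y|^{1-2\sigma}}\int_0^{r(x,y)}\frac{t^{\sigma-1}}{(t+1)^{1/2}}\,dt,\qquad r(x,y):=\frac{x(2-x)y(2-y)}{(x-y)^2},$$
so that $u(x)=\int_0^2 G(x,y)h(y)\,dy$. A direct Taylor analysis of this formula as $x\searrow 0$, with $y$ bounded away from $x$, yields the decomposition
$$G(x,y)=x^\sigma K(y)+R(x,y),\qquad K(y):=c_\sigma(2-y)^\sigma y^{\sigma-1},$$
together with the pointwise bound $|R(x,y)|\le C\,x^{\sigma+1}y^{\sigma-2}$ valid for $y\ge 2x$, while on the diagonal region $y<2x$ one has the cruder estimate $G(x,y)\le C|x-y|^{2\sigma-1}$.

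Setting $\bar a:=\int_0^2 K(y)h(y)\,dy$, which is a convergent integral since $K(y)=O(y^{\sigma-1})$ and $h$ is bounded, I would control $u(x)-\bar a x^\sigma$ by splitting $\int_0^2=\int_0^{2x}+\int_{2x}^2$ and decomposing $h(y)=h(0)+(h(y)-h(0))$ in each piece, using the H\"older bound $|h(y)-h(0)|\le[h]_{C^\vartheta}y^\vartheta$. The contributions from $h(y)-h(0)$ are controlled directly: on $(0,2x)$ by $C\int_0^{2x}|x-y|^{2\sigma-1}y^\vartheta\,dy=O(x^{2\sigma+\vartheta})$, and on $(2x,2)$ by combining the $K$-part, which gives $O(x^{2\sigma+\vartheta})$ after estimating the truncation of $\int_0^2 K(y)(h(y)-h(0))\,dy$, with the $R$-part $Cx^{\sigma+1}\int_{2x}^2 y^{\sigma+\vartheta-2}\,dy$, which is at most $O(x^{\sigma+\mu})$ (a brief case analysis on the sign of $\sigma+\vartheta-1$, with any logarithmic factor absorbed in the positive gap $\vartheta-\theta$, produces the exponent $\mu=\min\{1,\sigma+\theta\}$). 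The $h(0)$-contributions from both halves reassemble into $h(0)w_1(x)$, where $w_1(x)=c_\sigma'(x(2-x))_+^\sigma$ is the explicit Getoor/Dyda torsion-type solution of $(-\Delta)^\sigma w_1=1$ in $(0,2)$ with vanishing exterior data; the elementary expansion $w_1(x)=c_\sigma'\,2^\sigma x^\sigma+O(x^{\sigma+1})$ exactly matches the $\bar a\,x^\sigma$ coming from the $h(0)$ part of $\int_0^2 K(y)h(y)\,dy$, and leaves only an $O(x^{\sigma+1})\subseteq O(x^{\sigma+\mu})$ correction.

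The main obstacle I foresee is not conceptual but technical: the bookkeeping needed to keep track of the interplay between the distance $x$ to the boundary, the integration variable $y$, and the two H\"older exponents $\vartheta,\theta$. Delicate points include (i) the marginal regime $\sigma+\vartheta=1$, where the integral $\int_{2x}^2 y^{\sigma+\vartheta-2}\,dy$ produces a logarithm that must be absorbed using the strict inequality $\theta<\vartheta$; (ii) obtaining the sharp pointwise remainder $|R(x,y)|\le Cx^{\sigma+1}y^{\sigma-2}$ uniformly as both $x$ and $y$ tend to $0$, which requires controlling the sub-regimes $y\sim x$ and $y\gg x$ separately inside Boggio's formula; and (iii) checking that the identification $\bar a=\int_0^2 K(y)h(y)\,dy$ is internally consistent with the explicit torsion-function computation when $h$ is constant, so that no spurious $x^\sigma$ coefficient is produced when the two halves of the integral are recombined. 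Once these kernel estimates are in place, everything reduces to routine dyadic integration, delivering~\eqref{7232340sne}.
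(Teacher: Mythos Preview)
Your approach is correct in spirit and genuinely different from the paper's. The paper does not prove this lemma at all: it simply cites Grubb's higher-order boundary regularity results for $\mu$-transmission pseudodifferential operators (Theorem~4 of~\cite{MR3276603} and Theorem~2.2 of~\cite{MR3293447}), which deliver the expansion~\eqref{7232340sne} as a special case of a very general machinery. Your route via Boggio's explicit Green function is far more elementary and self-contained, tailored to the one-dimensional interval, and has the bonus of producing an explicit integral formula for the coefficient~$\bar a$ against the Martin-type kernel $K(y)=c_\sigma(2-y)^\sigma y^{\sigma-1}$. The trade-off is that Grubb's theory immediately gives the analogous statement in any dimension and for more general operators, whereas your computation is specific to this setting.

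One technical slip to flag: the ``cruder'' diagonal bound $G(x,y)\le C|x-y|^{2\sigma-1}$ cannot be right as written, since for $\sigma>\tfrac12$ the Green function is strictly positive on the diagonal (indeed $G(x,x)\sim \big(x(2-x)\big)^{2\sigma-1}$), while $|x-y|^{2\sigma-1}$ vanishes there. What Boggio's formula actually gives in the region $0<y<2x$ is
\[
G(x,y)\le C\Big(|x-y|^{2\sigma-1}+\big(x(2-x)y(2-y)\big)^{\sigma-\frac12}\Big)\le C\,x^{2\sigma-1},
\]
and this corrected bound still yields $\int_0^{2x}G(x,y)\,y^\vartheta\,dy=O(x^{2\sigma+\vartheta})$, so the rest of your argument goes through unchanged. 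With that fix, your proof is complete.
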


\begin{proof} See formula~(6) in Theorem~4
of~\cite{MR3276603},
or formula~(2.13) in Theorem~2.2 of~\cite{MR3293447}.\end{proof}

Another auxiliary result that we need in the proof of Theorem~\ref{BR}
deals with the improved boundary regularity for linear equations:

\begin{lemma} \label{REGONIOA}
Let~$\sigma\in\left(\frac12,1\right)$, $\vartheta\in(0,1)$
and~$\mu:=\min\{1,\vartheta+\sigma\}$.
Assume that
$$ \begin{cases}
(-\Delta)^\sigma u=h & {\mbox{ in }}(0,1),\\
u=0 & {\mbox{ in }}(-\infty,0).\end{cases}$$
Suppose that, for all~$x_1\in\R$,
\begin{equation}\label{I82:AKSvvJJS}
|u(x_1)|\le C\, |x_1|^{\mu+\sigma}\end{equation}
for some~$C>0$
and that~$h\in C^\vartheta([0,1])$.
Then~$u\in C^{\mu+\sigma}\left(\left[0,\frac1{10}\right]\right)$.
\end{lemma}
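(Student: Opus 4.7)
The plan is to combine the pointwise boundary expansion from Lemma~\ref{CLAUDIA2} with uniform-in-$x_0$ interior Schauder estimates for the fractional Laplacian, then patch via a standard two-scale argument.

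First, applying Lemma~\ref{CLAUDIA2} to~$u$ yields the expansion
\begin{equation*}
u(x_1) = \bar{a}\, x_1^\sigma + O(x_1^{\mu+\sigma}) \qquad \text{as } x_1 \searrow 0,
\end{equation*}
for some~$\bar{a}\in\R$. Combining this with the growth hypothesis~\eqref{I82:AKSvvJJS} gives $|\bar{a}|\, x_1^\sigma \le C\, x_1^{\mu+\sigma}$ for small $x_1>0$; dividing by $x_1^\sigma$ and sending $x_1\searrow 0$ forces $\bar{a} = 0$ (using $\mu>0$). Hence the leading boundary coefficient in Lemma~\ref{CLAUDIA2}'s expansion vanishes, which gives a pointwise Hölder estimate at~$0$ of exactly the desired order~$\mu+\sigma$.

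Second, I would establish a uniform interior $C^{\mu+\sigma}$-estimate at each~$x_0 \in (0, 1/10]$ by rescaling. Setting $r := x_0/4$ and $v(y) := u(x_0 + ry)/r^{\mu+\sigma}$, the growth hypothesis~\eqref{I82:AKSvvJJS} yields a uniform-in-$x_0$ bound on $v$ over any fixed bounded set of $y$, while the equation reads $(-\Delta)^\sigma v(y) = r^{\sigma-\mu}\,h(x_0 + ry)$ in~$B_1$. Since $\sigma-\mu+\vartheta\ge 0$ by the definition of~$\mu=\min\{1,\sigma+\vartheta\}$, the $C^\vartheta$-seminorm of the rescaled right-hand side is controlled by $[h]_{C^\vartheta}$ uniformly in~$x_0$. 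The potentially singular constant contribution $r^{\sigma-\mu}h(x_0)$, whose $L^\infty$-norm naively blows up, is absorbed by comparison with an explicit particular solution of $(-\Delta)^\sigma\phi = \mathrm{const}$, whose leading boundary profile is of the form $x_1^\sigma$ and was ``cancelled'' by the vanishing of $\bar{a}$ in Step~1. Interior Schauder then produces $[v]_{C^{\mu+\sigma}(B_{1/2})} \le C$ uniformly in~$x_0$, which unscales to a uniform $C^{\mu+\sigma}$-estimate for~$u$ on $B_{r/2}(x_0)$.

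Finally, a two-scale patching argument combines the two estimates. For~$x,y\in[0, 1/10]$ with~$s := |x-y|$, if $s$ is comparable to $\max(x,y)$ (say $s \ge \max(x,y)/8$), then Step~1 directly yields $|u(x)-u(y)| \le |u(x)|+|u(y)| \le C s^{\mu+\sigma}$; if instead $s \ll \max(x,y)$, both points lie in $B_{r/2}(x_0)$ with $x_0 = \max(x,y)$ and $r = x_0/4$, and the uniform interior estimate from Step~2 gives the same bound (with the natural tangent-line correction when $\mu+\sigma \ge 1$). Together these cases yield $u \in C^{\mu+\sigma}([0, 1/10])$, as desired. I expect the main technical obstacle to be making the uniform interior Schauder estimate in Step~2 rigorous: the delicate point is removing the $L^\infty$-singularity of the constant part of the rescaled right-hand side, using both the Hölder regularity of~$h$ (for the variable part) and the Step~1 cancellation of the~$x_1^\sigma$ boundary contribution (for the constant part).
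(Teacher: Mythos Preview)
Your two-scale patching strategy (boundary case via the growth bound, interior case via rescaled Schauder, then combine) is exactly the skeleton of the paper's proof. However, there are two substantive points where you diverge from the paper.

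First, your Step~1 is an unnecessary detour. The pointwise $C^{\mu+\sigma}$ estimate at the origin is precisely the hypothesis~\eqref{I82:AKSvvJJS}; you do not need to rederive it via Lemma~\ref{CLAUDIA2}, and in fact that lemma does not apply here since it requires $u=0$ on \emph{both} sides of the interval, whereas here $u$ vanishes only on $(-\infty,0)$. In the boundary case (your ``$s$ comparable to $\max(x,y)$''), the paper simply invokes~\eqref{I82:AKSvvJJS} directly, taking $\ell_q=0$.

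Second, and more seriously, the fix you propose in Step~2 does not work as written. The particular solution of $(-\Delta)^\sigma\phi=\mathrm{const}$ in $\{x_1>0\}$ with zero data on the left is proportional to $(x_1)_+^{2\sigma}$, not $(x_1)_+^{\sigma}$; and when you subtract it in the rescaled picture, you obtain a correction of size $r^{\sigma-\mu}$, which blows up as $r\to0$ (since $\mu>\sigma$). There is no cancellation mechanism coming from Step~1 to absorb this, because Step~1 concerns the behaviour at $0$, not at $x_0$. The paper avoids this difficulty entirely by \emph{not} dividing the rescaled function by $r^{\mu+\sigma}$: it sets $v(x_1):=u(dx_1+z)$ and applies a Schauder estimate specifically designed for functions of polynomial growth (Theorem~1.3 of~\cite{REVI} with $k=1$), which controls $v$ through the weighted quantity $\int_\R |v(x_1)|\,(1+|x_1|^{2+2\sigma})^{-1}\,dx_1$. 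The point is that $\mu+\sigma<1+2\sigma$, so this integral is finite and uniformly bounded; the growth hypothesis~\eqref{I82:AKSvvJJS} is thus absorbed into the weight rather than into a normalisation. This polynomial-growth Schauder is the genuine technical input that your sketch is missing.
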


\begin{proof} We give a direct proof of this result.
Interestingly, we will exploit a Schauder estimate
that we have recently obtained in a fractional Laplace setting
for functions with polynomial growth, which allows us
to treat this case without using any sophisticate method
involving blow-up limits of condition~\eqref{I82:AKSvvJJS}.

We remark that~$\mu+\sigma=
\min\{1+\sigma,\vartheta+2\sigma\}>1$. Consequently,
fixed~$p$, $q\in\left[0,\frac1{10}\right]$, our goal is to show that
\begin{equation}\label{SA:spoe}
|u(p)-u(q)-\ell_q\,(p-q)|\le C|p-q|^{\mu+\sigma},
\end{equation}
for some~$C>0$ and~$\ell_q\in\R$.
To this end, we set~$z:=\frac{p+q}2$ and~$d:=\frac{|p-q|}{2}$.
We observe that~$z\in\left[ 0,\frac1{10}\right]$.

We distinguish two cases: either~$z\in[0,2d]$ or~$z>2d$.
Let us first suppose that~$z\in[0,2d]$.
In this case, $p+q=2z\le4d$ and thus, by~\eqref{I82:AKSvvJJS},
$$  |u(p)-u(q)|\le |u(p)|+|u(q)|\le C(q^{\mu+\sigma}+p^{\mu+\sigma})\le
Cd^{\mu+\sigma}=C|p-q|^{\mu+\sigma},
$$
up to changing~$C>0$ at any step of the computation.
This establishes~\eqref{SA:spoe} in this case (with~$\ell_q:=0$).

Let us now suppose that
\begin{equation}\label{dsc2dz}
z>2d
\end{equation}
and let
$$ v(x_1):= u(dx_1+z)
\qquad{\mbox{and}}\qquad
g(x_1):=d^{2\sigma}h(dx_1+z).$$
Notice that if~$x_1\in (-2,2)$, we have that~$dx_1+z\in
(-2d+z,2d+z)\subseteq\left(0, \frac1{5}\right)$, thanks to~\eqref{dsc2dz},
and accordingly
\begin{equation}\label{BAFinfir-1}
(-\Delta)^\sigma v(x_1)=d^{2\sigma}\,(-\Delta)^\sigma
u(dx_1+z)=d^{2\sigma}\,h(dx_1+z)=g(x_1)
\end{equation}
for every~$ x_1\in (-2,2)$.

Moreover, for any~$x_1$, $y_1\in(-2,2)$,
\begin{equation} \label{BAFinfir-2}
\begin{split}&|g(x_1)-g(y_1)|=
d^{2\sigma}|h(dx_1+z)-h(dy_1+z)|\le 
d^{2\sigma}
[h]_{C^\vartheta([0,1])}\,|dx_1-dy_1|^\vartheta
\\&\qquad\qquad=d^{2\sigma+\vartheta}
[h]_{C^\vartheta([0,1])}\,|x_1-y_1|^\vartheta\le [h]_{C^\vartheta([0,1])}\,|x_1-y_1|^\vartheta
.\end{split}\end{equation}
Furthermore, by~\eqref{I82:AKSvvJJS},
$$ |v(x_1)|\le  C\, |dx_1+z|^{\mu+\sigma} 
\le C\,\left( d^{\mu+\sigma} |x_1|^{\mu+\sigma}+1 \right).
$$
Consequently, since~$2+\sigma-\mu\ge2-\vartheta>1$,
\begin{equation} \label{BAFinfir-3}
\int_\R \frac{|v(x_1)|}{1+|x_1|^{2+2\sigma}}\,dx_1\le
C .
\end{equation}
In view of \eqref{BAFinfir-1}, \eqref{BAFinfir-2} and~\eqref{BAFinfir-3},
we can use the Schauder estimate in Theorem~1.3 of~\cite{REVI}
(exploited here with~$n:=1$ and~$k:=1$). Accordingly, we find that,
for every~$\zeta\in(1,\vartheta+2\sigma)$,
$$ \| v\|_{C^\zeta([-1,1])}\le C.$$
As a consequence,
\begin{eqnarray*}
|u(p)-u(q)-u'(q)\,(p-q)|&=&
d^{\mu+\sigma}\left| v\left( \frac{p-z}{d}\right)-
v\left( \frac{q-z}{d}\right)-v\left( \frac{p-z}{d}\right)\frac{p-q}{d}\right|\\
&\le & d^{\mu+\sigma}\,\| v\|_{C^\zeta([-1,1])}\,\left|\frac{p-q}{d}\right|^{\zeta}\\
&\le& C d^{\mu+\sigma-\zeta}\, |p-q|^\zeta,
\end{eqnarray*}
which proves~\eqref{SA:spoe} also in this case, by choosing~$\zeta:=\mu+\sigma$.
\end{proof}

\section{Uniform convergence to hyperplanes}\label{T5PAO}

In this section, we discuss some general density estimates
and their relation with uniform convergence of blow-up limits
in the Hausdorff distance. The pivotal result in this setting is the following:

\begin{lemma} 
Let~$E$ be $s$-minimal in~$\Omega$ and~$r>0$.
Assume that~$x\in(\partial E)\cap\Omega$. Suppose also that
there exists~$c_o>0$ such that
for all~$y\in E\setminus\Omega$
with~${\rm dist}(y,\partial\Omega)\le r/2$ we have that
\begin{equation}\label{MUjn09-02}
|B_{r/2}(y)\cap E|\ge c_o\,r^n.\end{equation}
Then, there exists~$c>0$, possibly depending on~$c_o$,
such that
$$ |B_r(x)\cap E| \ge c\,r^n.$$
\end{lemma}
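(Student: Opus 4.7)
The plan is to reduce to the classical interior density estimate (Theorem~4.1 of~\cite{MR2675483}) via a dichotomy on whether mass of~$E$ sits just outside~$\Omega$ near~$x$. Concretely, I would split the argument according as the ball~$B_{r/2}(x)$ contains a point of~$E\setminus\Omega$ or not.

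In the first subcase, suppose there is~$y\in E\setminus\Omega$ with~$|y-x|\le r/2$. Since~$x\in\Omega$ and~$y\notin\Omega$, the segment joining them must cross~$\partial\Omega$, giving~${\rm dist}(y,\partial\Omega)\le|y-x|\le r/2$. Thus~$y$ fulfils the hypothesis of~\eqref{MUjn09-02}, so~$|B_{r/2}(y)\cap E|\ge c_o r^n$, and the inclusion~$B_{r/2}(y)\subseteq B_r(x)$ (which holds because~$|y-x|\le r/2$) immediately yields~$|B_r(x)\cap E|\ge c_o r^n$. This disposes of the case where the exterior density assumption contributes directly.

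In the second subcase, assume that~$E\cap\Omega^c\cap B_{r/2}(x)$ is negligible, so that~$E\cap B_{r/2}(x)\subseteq\Omega$ up to a null set. Here the idea is that no exterior mass of~$E$ is felt in the relevant window, so~$E$ behaves near~$x$ as a purely interior $s$-minimal set. For any~$\rho\in(0,r/2)$ the competitor~$E':=E\setminus B_\rho(x)$ satisfies~$E'\cap\Omega^c=E\cap\Omega^c$, because the removed piece~$E\cap B_\rho(x)$ lies (essentially) in~$\Omega$. Choosing a bounded Lipschitz domain~$\Omega'$ with~$E\cap\overline{B_\rho(x)}\Subset\Omega'\Subset\Omega$ — possible after shrinking~$\rho$ slightly, since~$E\cap B_{r/2}(x)\subseteq\Omega$ — the local $s$-minimality of~$E$ gives~${\rm Per}_s(E;\Omega')\le{\rm Per}_s(E';\Omega')$. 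Combining this with the fractional isoperimetric inequality and iterating, exactly as in the proof of the interior density estimate in~\cite{MR2675483}, yields~$|B_\rho(x)\cap E|\ge c_2\rho^n$ for all~$\rho$ in this range, and taking~$\rho$ close to~$r/2$ gives the claim with~$c:=c_2/2^n$.

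The main technical obstacle lies in the second subcase: the ball~$B_\rho(x)$ itself need not be compactly contained in~$\Omega$, so one must justify that~$E\cap\overline{B_\rho(x)}\Subset\Omega$ in order to invoke the local minimality definition of the paper. This is done by exploiting the strict emptiness of~$E\cap\Omega^c\cap B_{r/2}(x)$ to produce a positive separation between~$E\cap B_\rho(x)$ and~$\partial\Omega$ after a small reduction of~$\rho$, a point that is invisible in the interior version of the estimate but essential here. Once this separation is secured, the iteration underlying the classical density estimate transfers verbatim.
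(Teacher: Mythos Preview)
Your dichotomy and your first subcase match the paper's proof exactly. In the second subcase the paper is much shorter: since~$B_{r/2}(x)\cap E\subset\Omega$, one simply invokes Theorem~4.1 of~\cite{MR2675483} (which gives the density estimate under precisely that hypothesis) with~$r/2$ in place of~$r$, and concludes. No re-derivation of the iteration is needed.

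Your re-derivation is in spirit correct, but the ``main technical obstacle'' you raise is both unnecessary and not resolvable as you describe. It is unnecessary because the lemma assumes~$E$ is $s$-minimal in~$\Omega$ (not merely locally), so the competitor~$E\setminus B_\rho(x)$, which agrees with~$E$ outside~$\Omega$, is admissible directly; there is no need to find an intermediate~$\Omega'\Subset\Omega$ containing~$E\cap\overline{B_\rho(x)}$ compactly. And your proposed fix would not work anyway: the emptiness of~$E\cap\Omega^c\cap B_{r/2}(x)$ does not produce a positive separation between~$E\cap B_\rho(x)$ and~$\partial\Omega$, since~$E$ can touch~$\partial\Omega$ from the inside. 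Dropping the~$\Omega'$ discussion entirely and comparing in~$\Omega$ makes your argument go through, and it then coincides with the content of Theorem~4.1 in~\cite{MR2675483}.
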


\begin{proof} We recall that a similar result was proved in Theorem~4.1 of~\cite{MR2675483}
when condition~\eqref{MUjn09-02} is replaced
by the stronger condition that~$B_r(x)\cap E\subset\Omega$.
Then, in our setting, we distinguish two cases.
First, if~$B_{r/2}(x)\cap E\subset\Omega$,
we can apply Theorem~4.1 of~\cite{MR2675483}
(with~$r/2$ instead of~$r$) and conclude that~$|B_{r/2}(x)\cap E|
\ge c\,(r/2)^n$, from which we plainly obtain the desired result,
up to renaming~$c>0$.

If instead~$B_{r/2}(x)\cap E\not\subset\Omega$,
we take~$y\in \big(B_{r/2}(x)\cap E\big)\setminus\Omega$.
We are in the position of using~\eqref{MUjn09-02} and thus obtain that
$$
|B_{r}(x)\cap E|\ge
|B_{r/2}(y)\cap E|\ge c_o\,r^n,$$
which gives the desired result, up to renaming constants.
\end{proof}

{F}rom this and Lemma~\ref{Sempli}, we easily conclude that:

\begin{corollary}\label{Sempli-APP}
In the notation of Lemma~\ref{Sempli}, we have that,
up to a subsequence,
$E_{k}$ converges to~$E_{00}$ locally in the Hausdorff distance.
\end{corollary}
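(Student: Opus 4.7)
The plan is to upgrade the $L^1_{\rm loc}$ convergence from Lemma~\ref{Sempli} to local Hausdorff convergence by means of uniform density estimates, supplied by the lemma preceding this corollary, and then a standard contradiction argument.

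First, I would check that hypothesis~\eqref{MUjn09-02} holds uniformly in $k$ for the family $\{E_k\}$ (and, symmetrically, for their complements) at points of $E_k\setminus k\Omega$ close to~$\partial(k\Omega)$. For $x_1<0$, one has $x\in E_k$ iff $x_2<k\,v(x_1/k)$, and since $v\in C^{1,\alpha}$ with $v(0)=0$, the function $x_1\mapsto k\,v(x_1/k)$ converges on any fixed compact interval to the linear map $x_1\mapsto v'(0)\,x_1$ with a rate of order $k^{-\alpha}$. Therefore, for $k$ large, $E_k$ restricted to $\{x_1<0\}\cap B_R$ differs from the half-plane $\{x_2<v'(0)x_1\}$ by a set of arbitrarily small measure, and the uniform density condition~\eqref{MUjn09-02} is satisfied with a constant~$c_o>0$ independent of~$k$, both for $E_k$ and for $E_k^c$. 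The same reasoning applies to $E_{00}$ and $E_{00}^c$, thanks to~\eqref{BL3-0}. Consequently, by the density estimate lemma in Appendix~\ref{T5PAO}, there exists $c>0$ such that for every $x\in(\partial E_k)$ in any fixed compact set $K$ and every sufficiently small $r>0$,
\begin{equation*}
|B_r(x)\cap E_k|\ge c\,r^2 \qquad\text{and}\qquad |B_r(x)\cap E_k^c|\ge c\,r^2,
\end{equation*}
uniformly in~$k$, and the analogous estimates hold at boundary points of $E_{00}$.

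Next, I argue by contradiction. Fix a compact set $K\subset\R^2$ and suppose that, along a subsequence, the Hausdorff distance between $\partial E_k\cap K$ and $\partial E_{00}\cap K$ does not tend to zero. Then one of two cases occurs, up to a further subsequence: either (a) there exist $\delta>0$ and $x_k\in(\partial E_k)\cap K$ with $\mathrm{dist}(x_k,\partial E_{00})\ge\delta$; or (b) there exist $\delta>0$ and $x\in(\partial E_{00})\cap K$ with $\mathrm{dist}(x,\partial E_k)\ge\delta$ for all $k$. In case~(a), after extracting a convergent subsequence $x_k\to x_\star\in K$, for large~$k$ the ball $B_{\delta/2}(x_\star)$ is disjoint from $\partial E_{00}$, hence $\chi_{E_{00}}$ is constant on it; assuming without loss of generality the constant is $0$, the uniform density estimate gives $|B_{\delta/4}(x_k)\cap E_k|\ge c(\delta/4)^2$, and since for large~$k$ one has $B_{\delta/4}(x_k)\subset B_{\delta/2}(x_\star)$, this contradicts the $L^1_{\rm loc}$ convergence of Lemma~\ref{Sempli}. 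The case where the constant is~$1$ is handled symmetrically using the density estimate for $E_k^c$. Case~(b) is analogous: density of both $E_{00}$ and $E_{00}^c$ at~$x$ forces positive $L^1$ mass of both $E_{00}$ and $E_{00}^c$ in $B_{\delta/2}(x)$, while $\partial E_k\cap B_\delta(x)=\varnothing$ forces $\chi_{E_k}$ to be identically $0$ or $1$ on $B_{\delta/2}(x)$, again contradicting the $L^1_{\rm loc}$ convergence.

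The main technical obstacle is the uniform verification of~\eqref{MUjn09-02} for the rescaled sets $E_k$ near the lateral boundary of $k\Omega$: one has to make sure that the external trace given by $k\,v(\cdot/k)$ is close enough to its linear blow-up to yield a density constant $c_o$ independent of~$k$. This is precisely where the $C^{1,\alpha}$ assumption on~$v$ (and the normalization $v(0)=0$) is crucial, since it provides a quantitative rate of convergence to the tangent line and therefore a uniform lower bound on the mass of $E_k$ and of $E_k^c$ in any half-ball centered at an exterior boundary point.
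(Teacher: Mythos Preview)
Your proposal is correct and follows the same approach as the paper: the paper's proof of this corollary is essentially the one-liner ``From this and Lemma~\ref{Sempli}, we easily conclude,'' where ``this'' is the density estimate lemma in Appendix~\ref{T5PAO}, and you have spelled out the standard argument (uniform two-sided density estimates plus the $L^1_{\rm loc}$ convergence of Lemma~\ref{Sempli} imply local Hausdorff convergence by contradiction). Your verification of hypothesis~\eqref{MUjn09-02} via the $C^{1,\alpha}$ regularity of the external datum is exactly the point, and your case analysis~(a)/(b) is the routine way to finish.
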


\section{Proof of formula~\eqref{BSC}}\label{TED}

This section is devoted to the proof of a technical statement needed in Theorem~\ref{BR}.

\begin{proof}[Proof of~\eqref{BSC}]
We consider separately~$f_1$, $f_2$ and~$f_3$, as given by~\eqref{CHA823r44rfr}.
To estimate~$f_1$, we observe that, using~\eqref{KS-OAKS92348}
with~$J:=\R\setminus I$,
\begin{equation}\label{8UJS9idjveiiri} f_1(x_1)=-
\int_{\R\setminus I} F\left(\frac{u(x_1+ t)-u(x_1)}{|t|}
\right)\,\frac{dt}{|t|^{1+s}}.
\end{equation}
Consequently,
\begin{equation}\label{F1est1lin}
\| f_1\|_{L^\infty\left(\left[0,\frac{h}{4}\right]\right)}\le
\| F\|_{L^\infty(\R)}\,\int_{\R\setminus I} \frac{dt}{|t|^{1+s}}\le C\,\| F\|_{L^\infty(\R)},
\end{equation}
with~$C>0$ depending only on~$s$ and~$h$.

Now, recalling~\eqref{DEF:F}, we define
$$ \Phi(r):=rF'(r)=\frac{r}{(1+r^2)^{\frac{2+s}2}} .$$
Then, changing variable in~\eqref{8UJS9idjveiiri}, we see that
\begin{eqnarray*}
f_1'(x_1)&=&-\frac{d}{dx_1}
\int_{\vartheta\in(-\infty,x_1-\frac{h}4)\cup(x_1+\frac{h}4,+\infty)} F\left(\frac{u(\vartheta)-u(x_1)}{|x_1-\vartheta|}
\right)\,\frac{d\vartheta}{|x_1-\vartheta|^{1+s}}\\
&=&
\int_{\{|x_1-\vartheta|>h/4\}} F'\left(\frac{u(\vartheta)-u(x_1)}{|x_1-\vartheta|}
\right)\,
\left( \frac{u'(x_1)}{|x_1-\vartheta|^{2+s}}+\frac{(u(\vartheta)-u(x_1))(x_1-\vartheta)
}{|x_1-\vartheta|^{3+s}}\right)
\,d\vartheta
\\&&\qquad+(1+s)
\int_{\{|x_1-\vartheta|>h/4\}} F\left(\frac{u(\vartheta)-u(x_1)}{|x_1-\vartheta|}
\right)\,\frac{(x_1-\vartheta)\,d\vartheta}{|x_1-\vartheta|^{3+s}}\\&&\qquad
+\left[
F\left(\frac{4\,\left(u\left(x_1+\frac{h}4\right)-u(x_1)\right)}{h}
\right)
-F\left(\frac{4\,\left( u\left(x_1-\frac{h}4\right)-u(x_1)\right)}{h}\right)
\right]\frac{4^{1+s}}{h^{1+s}}\\
&=&u'(x_1)\,
\int_{\{|x_1-\vartheta|>h/4\}} F'\left(\frac{u(\vartheta)-u(x_1)}{|x_1-\vartheta|}
\right)\,
\frac{d\vartheta}{|x_1-\vartheta|^{2+s}}
\\
&&\qquad+
\int_{\{|x_1-\vartheta|>h/4\}} \Phi\left(\frac{u(\vartheta)-u(x_1)}{|x_1-\vartheta|}
\right)\,
\frac{(x_1-\vartheta)\,d\vartheta
}{|x_1-\vartheta|^{2+s}}
\\&&\qquad+(1+s)
\int_{\{|x_1-\vartheta|>h/4\}} F\left(\frac{u(\vartheta)-u(x_1)}{|x_1-\vartheta|}
\right)\,\frac{(x_1-\vartheta)\,d\vartheta}{|x_1-\vartheta|^{3+s}}\\&&\qquad+\left[
F\left(\frac{4\,\left(u\left(x_1+\frac{h}4\right)-u(x_1)\right)}{h}
\right)
-F\left(\frac{4\,\left( u\left(x_1-\frac{h}4\right)-u(x_1)\right)}{h}\right)
\right]\frac{4^{1+s}}{h^{1+s}}.
\end{eqnarray*}
Consequently,
\begin{equation}\label{Completaf1}\begin{split}
|f_1'(x_1)|\;&\le\|u'\|_{L^\infty\left(\left[0,\frac{h}{2}\right]\right)}
\|F'\|_{L^\infty(\R)}
\int_{\{|x_1-\vartheta|>h/4\}} 
\frac{d\vartheta}{|x_1-\vartheta|^{2+s}}
\\
&\qquad+\|\Phi\|_{L^\infty(\R)}
\int_{\{|x_1-\vartheta|>h/4\}} 
\frac{d\vartheta
}{|x_1-\vartheta|^{1+s}}
\\&\qquad+(1+s)\,\|F\|_{L^\infty(\R)}
\int_{\{|x_1-\vartheta|>h/4\}}
\frac{d\vartheta}{|x_1-\vartheta|^{2+s}}\\
&\qquad+\frac{4^{2+s}\,\|F\|_{L^\infty(\R)}}{h^{1+s}}
\\
&\le C\,\big(\|u'\|_{L^\infty\left(\left[0,\frac{h}{2}\right]\right)}+1\big),
\end{split}\end{equation}
with~$C>0$ depending only on~$s$ and~$h$.

Having completed the desired estimate for~$f_1$, we now focus on~$f_2$.
For this, we remark that, since~$F'$ is an even function,
if~$x_1$, $y_1\in\left[0,\frac{h}{4}\right]$
and~$t\in I$,
\begin{eqnarray*}&&
\big| a(x_1,t)-F'\big( u'(x_1)\big)\big|\\
&\le&
\int_0^1\left|
F'\left(\frac{\tau \big(u(x_1+ t)-u(x_1)\big)+ (1-\tau)u'(x_1)\,t}{|t|}
\right)-F'\big( u'(x_1)\big)\right|\,d\tau\\
&=&
\int_0^1\left|
F'\left(\frac{\tau \big(u(x_1+ t)-u(x_1)-u'(x_1)t\big)+u'(x_1)\,t}{|t|}
\right)-F'\left(\frac{u'(x_1)\,t}{|t|}
\right)\right|\,d\tau\\
&\le& \| F''\|_{L^\infty(\R)}\;
\frac{ \big|u(x_1+ t)-u(x_1)-u'(x_1)t\big|}{|t|}\\&\le&
\| F''\|_{L^\infty(\R)}\;\|u\|_{C^{1,\alphyy}\left(\left[0,\frac{h}{2}\right]\right)}\,|t|^{\alphyy}.
\end{eqnarray*}
Therefore, setting
\begin{equation}\label{9.16bis} {\mathcal{K}}(x_1,t):=
\frac{F'\big( u'(x_1)\big)-a(x_1,t)}{|t|^{2+s}},\end{equation}
we conclude that
\begin{equation}\label{EXAUDJC239}
\big| {\mathcal{K}}(x_1,t)\big|\le \| F''\|_{L^\infty(\R)}\;\|u\|_{C^{1,\alphyy}\left(\left[0,\frac{h}{2}\right]\right)}\,|t|^{\alphyy-s-2}.
\end{equation}
We also observe that
$$ \big|\delta(x_1,t)\big|\le\|u\|_{C^{1,\alphyy}\left(\left[0,\frac{h}{2}\right]\right)}\,|t|^{1+\alphyy}.$$
{F}rom this and~\eqref{EXAUDJC239}, we conclude that
\begin{equation}\label{2qasduquuasd}
\begin{split}&
\| f_2\|_{L^\infty\left(\left[0,\frac{h}{4}\right]\right)}\le
\sup_{x_1\in\left[0,\frac{h}{4}\right]}
\int_{I} \big|
\delta(x_1,t)\big|\,\big| {\mathcal{K}}(x_1,t)\big|\,dt\\&\qquad\le
\| F''\|_{L^\infty(\R)}\;\|u\|_{C^{1,\alphyy}\left(\left[0,\frac{h}{2}\right]\right)}^2
\int_I\,|t|^{2\alphyy-s-1}\le C,\end{split}
\end{equation}
thanks to~\eqref{RESTRIZ}, with~$C>0$ depending only
on~$s$, $\alphyy$, $h$ and~$
\|u\|_{C^{1,\alphyy}\left(\left[0,\frac{h}{2}\right]\right)}$.

We also remark that, for all~$x_1$, $y_1\in\left[0,\frac{h}4\right]$ and~$t\in I$,
\begin{equation}\label{Dearujerfba}
\big|\delta(x_1,t)-\delta(y_1,t)\big|\le5\,\|u\|_{C^{1,\alphyy}\left(\left[0,\frac{h}{2}\right]\right)}\,|t|\,
\min\{ |t|^{\alphyy} , |x_1-y_1|^{\alphyy} \}.
\end{equation}
To check this, we write
\begin{equation}\label{Dearujerfba2}
\big|\delta(x_1,t)-\delta(y_1,t)\big|=\big| u(x_1+t)-u(x_1)-u'(x_1)\,t-
u(y_1+t)+u(y_1)+u'(y_1)\,t\big|,
\end{equation}
and we distinguish two cases. When~$|t|\le|x_1-y_1|$, we observe that
\begin{eqnarray*}
\big| u(x_1+t)-u(x_1)-u'(x_1)\,t\big|\le
\|u\|_{C^{1,\alphyy}\left(\left[0,\frac{h}{2}\right]\right)}\,|t|^{1+\alphyy},
\end{eqnarray*}
and a similar estimate holds true with~$y_1$ replacing~$x_1$.
Hence, we deduce from~\eqref{Dearujerfba2}
that
\begin{eqnarray*}
\big|\delta(x_1,t)-\delta(y_1,t)\big|
\le2\,\|u\|_{C^{1,\alphyy}\left(\left[0,\frac{h}{2}\right]\right)}\,|t|^{1+\alphyy}
,\end{eqnarray*}
which gives~\eqref{Dearujerfba} in this case.

If instead~$|t|>|x_1-y_1|$, we assume without loss of generality that~$x_1\le y_1$,
and
we exploit~\eqref{Dearujerfba2} to write that
\begin{eqnarray*}&&
\big|\delta(x_1,t)-\delta(y_1,t)\big|\\
&=&
\big| u(x_1+t)-u(y_1+t)+u(y_1)-u(x_1)+u'(y_1)\,t-u'(x_1)\,t\big|
\\ &\le&
\big| u(x_1+t)-u(y_1+t)-u'(y_1+t)(x_1-y_1)\big|
\\&&\qquad+\big|u(y_1)-u(x_1)-u'(x_1)(y_1-x_1)\big|\\&&\qquad+|t|\,\big|
u'(y_1)-u'(x_1)\big|+\big|u'(y_1+t)-u'(x_1)\big|\,|x_1-y_1|\\
&\le& 2\,\|u\|_{C^{1,\alphyy}\left(\left[0,\frac{h}{2}\right]\right)}|x_1-y_1|^{1+\alphyy}
+\|u\|_{C^{1,\alphyy}\left(\left[0,\frac{h}{2}\right]\right)}\,|t|\,|x_1-y_1|^{\alphyy}\\
&&\qquad+
\big|u'(y_1+t)-u'(y_1)\big|\,|x_1-y_1|
+
\big|u'(y_1)-u'(x_1)\big|\,|x_1-y_1|\\
&\le& 3\,\|u\|_{C^{1,\alphyy}\left(\left[0,\frac{h}{2}\right]\right)}|x_1-y_1|^{1+\alphyy}
+\|u\|_{C^{1,\alphyy}\left(\left[0,\frac{h}{2}\right]\right)}\,|t|\,|x_1-y_1|^{\alphyy}+\|u\|_{C^{1,\alphyy}\left(\left[0,\frac{h}{2}\right]\right)}\,|t|^{\alphyy}\,|x_1-y_1|
\\&\le&5\,\|u\|_{C^{1,\alphyy}\left(\left[0,\frac{h}{2}\right]\right)}\,|t|\,|x_1-y_1|^{\alphyy}\end{eqnarray*}
which establishes~\eqref{Dearujerfba} also in this case.

Consequently, exploiting~\eqref{EXAUDJC239} and~\eqref{Dearujerfba},
and recalling~\eqref{RESTRIZ},
\begin{equation}\label{6zwe73737HS}
\begin{split}&
\left| \int_I \big( \delta(x_1,t)-\delta(y_1,t)\big) {\mathcal{K}}(x_1,t)\right|
\\ \le\;&5\,\| F''\|_{L^\infty(\R)}\;\|u\|_{C^{1,\alphyy}\left(\left[0,\frac{h}{2}\right]\right)}^2\,\int_I
|t|^{\alphyy-s-1}
\min\{ |t|^{\alphyy} , |x_1-y_1|^{\alphyy} \}\,dt\\
\le\; & C_{\alphyy,s}\,|x_1-y_1|^{\min\{\alphyy,\,2\alphyy-s\}},
\end{split}
\end{equation}
for some~$C_{\alphyy,s}>0$ depending only on~$s$, $\alphyy$, $h$, and~$\|u\|_{C^{1,\alphyy}\left(\left[0,\frac{h}{2}\right]\right)}$.

Now, if~$x_1$, $y_1\in\left[0,\frac{h}{4}\right]$,
$\tau\in(0,1)$, $t\in I$, we set
\begin{eqnarray*}
G(t,\tau,x_1,y_1)&:=&
F'\left( u'(x_1)\right)-
F'\left(\frac{\tau \big(u(x_1+ t)-u(x_1)-u'(x_1)\,t\big)+u'(x_1)\,t}{|t|}\right)\\&&\qquad
-F'\left(u'(y_1)\right)+
F'\left(\frac{\tau \big(u(y_1+ t)-u(y_1)-u'(y_1)\,t\big)+u'(y_1)\,t}{|t|}
\right).\end{eqnarray*}
To ease the notation, we write~$G(t):=G(t,\tau,x_1,y_1)$.
Using that~$F'$ is even, we see that
$$ G(0):=\lim_{t\to0} G(t)=0,$$
and, if~$t>0$,
\begin{eqnarray*} G(t)&=&
F'\left( u'(x_1)\right)-
F'\left(\frac{\tau \big(u(x_1+ t)-u(x_1)-u'(x_1)\,t\big)}{t}+u'(x_1)\right)\\&&\qquad
-F'\left(u'(y_1)\right)+
F'\left(\frac{\tau \big(u(y_1+ t)-u(y_1)-u'(y_1)\,t\big)}{t}
+u'(y_1)\right),\end{eqnarray*}
which gives that
\begin{equation}\label{G22}\begin{split} t^2 G'(t)\;&=
-\tau F''\left(\frac{\tau \big(u(x_1+ t)-u(x_1)-u'(x_1)\,t\big)}{t}+u'(x_1)\right)\\&\qquad\times
\Big(u'(x_1+ t)\,t-u(x_1+ t)+u(x_1)\Big)
\\&+\tau
F''\left(\frac{\tau \big(u(y_1+ t)-u(y_1)-u'(y_1)\,t\big)}{t}
+u'(y_1)\right)\\&\qquad\times
\Big(u'(y_1+ t)\,t-u(y_1+ t)+u(y_1)\Big).\end{split}\end{equation}
Now, we claim that, if~$t\in\left(0,\frac{h}4\right)$,
\begin{equation}\label{2cas}
|G'(t)|\le \frac{ C_{\alphyy,s} }{t}\,\min\{ t^{\alphyy}, |x_1-y_1|^{\alphyy}\},
\end{equation}
for some~$C_{\alphyy,s}>0$ depending only on~$s$, $\alphyy$ and~$\|u\|_{C^{1,\alphyy}\left(\left[0,\frac{h}{2}\right]\right)}$.

For this, we observe that
\begin{equation}\label{DESRIMAL}
\begin{split}&
\Big|u(x_1+ t)-u(x_1)-u'(x_1+ t)\,t\Big|\\ \le\;&
\Big|u(x_1+ t)-u(x_1)-u'(x_1)\,t\Big|+t\,
\Big|u'(x_1)-u'(x_1+ t)\Big|\\ \le\;&
2\,\|u\|_{C^{1,\alphyy}\left(\left[0,\frac{h}{2}\right]\right)}\,t^{1+\alphyy},
\end{split}\end{equation}
and a similar estimate holds true for~$x_1$ replaced by~$y_1$.

Now, we distinguish two cases. If~$t\le|x_1-y_1|$, we exploit~\eqref{DESRIMAL}
and we obtain that
\begin{equation}
t^2 |G'(t)|\le 4\,\|F''\|_{L^\infty(\R)}\,\|u\|_{C^{1,\alphyy}\left(\left[0,\frac{h}{2}\right]\right)}\,t^{1+\alphyy}
,\end{equation}
which proves~\eqref{2cas} in this case.

If instead~$t>|x_1-y_1|$, we observe that, for all~$A$, $B$, $C$, $D\in\R$,
\begin{equation}\label{PInavvot}
\begin{split}&
\big| F''(A)B-F''(C)D\big|\le
\|F''\|_{L^\infty(\R)}\,|B-D|+\big| F''(A)-F''(C)\big|\,|D|\\&\qquad\qquad\le
\|F''\|_{C^1(\R)}\,\big(|B-D|+|D|\,|A-C|\big).
\end{split}\end{equation}
We apply this estimate with
\begin{eqnarray*}&& A:=\frac{\tau \big(u(x_1+ t)-u(x_1)-u'(x_1)\,t\big)}{t}+u'(x_1)
,\qquad B:=u'(x_1+ t)\,t-u(x_1+ t)+u(x_1),
\\&& C:=\frac{\tau \big(u(y_1+ t)-u(y_1)-u'(y_1)\,t\big)}{t}
+u'(y_1),\qquad D:= u'(y_1+ t)\,t-u(y_1+ t)+u(y_1).\end{eqnarray*}
In this case, assuming, without loss
of generality, that~$x_1\le y_1$, we observe that
\begin{eqnarray*}&&
t\,|A-C|\\&=&
\Big|
\tau \big(u(x_1+ t)-u(y_1+t)-u(x_1)+u(y_1)-u'(x_1)\,t+
u'(y_1)\,t\big)+u'(x_1)\,t-u'(y_1)\,t
\Big|\\&\le&
\Big| u(x_1+ t)-u(y_1+t)-u(x_1)+u(y_1)-u'(x_1)\,t+
u'(y_1)\,t\Big|+\Big|u'(x_1)-u'(y_1)\Big|\,t
\\ &\le&
\left| \int_0^t \Big(u'(x_1+ \theta)-u'(y_1+\theta)\Big)\,d\theta
-u'(x_1)\,t+
u'(y_1)\,t\right|+\|u\|_{C^{1,\alphyy}\left(\left[0,\frac{h}{2}\right]\right)}\,|x_1-y_1|^{\alphyy}\,t\\ &\le&
\int_0^t \Big|u'(x_1+ \theta)-u'(y_1+\theta)\Big|\,d\theta
+2\,\|u\|_{C^{1,\alphyy}\left(\left[0,\frac{h}{2}\right]\right)}\,|x_1-y_1|^{\alphyy}\,t\\
&\le&3\,\|u\|_{C^{1,\alphyy}\left(\left[0,\frac{h}{2}\right]\right)}\,|x_1-y_1|^{\alphyy}\,t.
\end{eqnarray*}
Similarly,
\begin{eqnarray*}
|B-D|&=&\Big|
u'(x_1+ t)\,t-u'(y_1+t)\,t-u(x_1+ t)+u(y_1+t)+u(x_1)-u(y_1)\Big|\\
&=&\left|
u'(x_1+ t)\,t-u'(y_1+t)\,t+\int_0^t\Big( u'(y_1+\theta)-u'(x_1+\theta)\Big)\,d\theta\right|\\&\le&
2\,\|u\|_{C^{1,\alphyy}\left(\left[0,\frac{h}{2}\right]\right)}\,|x_1-y_1|^{\alphyy}\,t,
\end{eqnarray*}
and~$|D|\le 2\,\|u\|_{C^{1,{\alphyy}}\left(\left[0,\frac{h}{2}\right]\right)}\,t^{1+{\alphyy}}$, thanks to~\eqref{DESRIMAL}.

In view of these observations, we thus find that
\begin{eqnarray*} |B-D|+|D|\,|A-C|
&\le& 2\,\|u\|_{C^{1,{\alphyy}}\left(\left[0,\frac{h}{2}\right]\right)}\,|x_1-y_1|^{\alphyy}\,t+
6\,\|u\|_{C^{1,{\alphyy}}\left(\left[0,\frac{h}{2}\right]\right)}^2\,|x_1-y_1|^{\alphyy}\,t^{1+{\alphyy}}\\&
\le &C_{\alphyy}\,|x_1-y_1|^{\alphyy}\,t,\end{eqnarray*}
with~$C_{\alphyy}>0$ only depending on~$\|u\|_{C^{1,{\alphyy}}\left(\left[0,\frac{h}{2}\right]\right)}$.

Using this, \eqref{G22} and~\eqref{PInavvot}, we conclude that
\begin{eqnarray*}
t^2 |G'(t)| \le
\|F''\|_{C^1(\R)}\,\Big(|B-D|+|D|\,|A-C|\Big)\le
C_{{\alphyy},s}\,|x_1-y_1|^{\alphyy}\,t,
\end{eqnarray*}
for some~$C_{{\alphyy},s}>0$,
and this completes the proof of~\eqref{2cas}.

Therefore, if~$t>0$,
\begin{eqnarray*}&&
|G(t)|=|G(t)-G(0)|\le \int_{0}^{t} |G'(\theta)|\,d\theta
\le\int_{0}^{t}
\frac{ C_{{\alphyy},s} }{\theta}\,\min\{ \theta^{\alphyy}, |x_1-y_1|^{\alphyy}\}
\,d\theta\\&&\qquad\qquad\qquad\le C_{{\alphyy},s}\,
\min\{ t^{\alphyy}, |x_1-y_1|^{\alphyy}\}\,\left(1+
\left(\log\frac{t}{|x_1-y_1|}\right)_+
\right),
\end{eqnarray*}
up to renaming~$C_{{\alphyy},s}$. Arguing for~$t<0$ in a similar
way, we obtain an estimate valid for all~$t\in I$ (in which~$t$
on the right hand side is replaced by~$|t|$).

With this, recalling~\eqref{9.16bis}, we conclude that
\begin{eqnarray*}
&& \big|{\mathcal{K}}(x_1,t)-{\mathcal{K}}(y_1,t)\big|\\&=&\left|\frac{F'\big( u'(x_1)\big)}{|t|^{2+s}}-
\frac{ a(x_1,t) }{|t|^{2+s}}-
\frac{F'\big( u'(y_1)\big)}{|t|^{2+s}}+
\frac{ a(y_1,t) }{|t|^{2+s}}\right|\\&=&
\frac1{|t|^{2+s}}\,\left|
F'\big( u'(x_1)\big)-
\int_0^1 F'\left(\frac{\tau \big(u(x_1+ t)-u(x_1)\big)+ (1-\tau)u'(x_1)\,t}{|t|}
\right)\,d\tau\right.\\&&\left.\qquad-F'\big( u'(y_1)\big)+
\int_0^1 F'\left(\frac{\tau \big(u(y_1+ t)-u(y_1)\big)+ (1-\tau)u'(y_1)\,t}{|t|}
\right)\,d\tau\right|\\&=&
\frac1{|t|^{2+s}}\,\left|\int_0^1 G(t,\tau,x_1,y_1)\,d\tau\right|\\&\le&
\frac{C_{{\alphyy},s}}{|t|^{2+s}}\,
\min\{ |t|^{\alphyy}, |x_1-y_1|^{\alphyy}\}\,\left(1+
\left(\log\frac{|t|}{|x_1-y_1|}\right)_+
\right).
\end{eqnarray*}
As a consequence, up to renaming~$C_{{\alphyy},s}$,
\begin{equation}\label{LOGIN0}
\begin{split}&
\left|\int_I \delta u(x_1,t)\,\big({\mathcal{K}}(x_1,t)-{\mathcal{K}}(y_1,t)\big)\,dt\right|
\\ \le\;& C_{{\alphyy},s}\,
\int_I |t|^{{\alphyy}-1-s}\,\min\{ |t|^{\alphyy}, |x_1-y_1|^{\alphyy}\}\,\left(1+
\left(\log\frac{|t|}{|x_1-y_1|}\right)_+
\right)\,dt\\
\le\;& C_{{\alphyy},s}\,\left(
|x_1-y_1|^{\min\{{\alphyy},\,2{\alphyy}-s\}}+
|x_1-y_1|^{{\alphyy}}
\int_{t\in(|x_1-y_1|,h/4)} t^{{\alphyy}-1-s}\log\frac{t}{|x_1-y_1|}\,dt\right).
\end{split}\end{equation}
Now we claim that
\begin{equation}\label{LOGIN}
|x_1-y_1|^{{\alphyy}}
\int_{t\in(|x_1-y_1|,h/4)} t^{{\alphyy}-1-s}\log\frac{t}{|x_1-y_1|}\,dt\le
C\,|x_1-y_1|^{\kappa({\alphyy})},\end{equation}
for some~$C>0$ depending on~${\alphyy}$, $s$ and~$h$.
Indeed, if~${\alphyy}<s$ we
integrate the logarithm by parts and we obtain that
\begin{eqnarray*}&&
\int_{t\in(|x_1-y_1|,h/4)} t^{{\alphyy}-1-s}\log\frac{t}{|x_1-y_1|}\,dt=
\frac1{{\alphyy}-s}
\int_{t\in(|x_1-y_1|,h/4)} \frac{d}{dt} \Big(
t^{{\alphyy}-s}\Big)\,\log\frac{t}{|x_1-y_1|}\,dt\\&&\qquad=
\frac1{{\alphyy}-s}\left(
\left(\frac{h}4\right)^{{\alphyy}-s}\log\frac{h}{4\,|x_1-y_1|}
-\int_{t\in(|x_1-y_1|,h/4)}
t^{{\alphyy}-s-1}\,dt\right)\\
&&\qquad=\frac1{{\alphyy}-s}\left(
\left(\frac{h}4\right)^{{\alphyy}-s}\log\frac{h}{4\,|x_1-y_1|}
-\frac{1}{{\alphyy}-s}\left( \left( \frac{h}4\right)^{{\alphyy}-s}-
|x_1-y_1|^{{\alphyy}-s}\right)\right)\\
&&\qquad\le\frac{|x_1-y_1|^{{\alphyy}-s}}{({\alphyy}-s)^2},
\end{eqnarray*}
and this proves~\eqref{LOGIN} in this case.

If instead~${\alphyy}\ge s$, we have from~\eqref{RESTRIZ} that~${\alphyy}>s$.
Hence, we see that, in this case,
\begin{eqnarray*}&&
\int_{t\in(|x_1-y_1|,h/4)} t^{{\alphyy}-1-s}\log\frac{t}{|x_1-y_1|}\,dt=
|x_1-y_1|^{{\alphyy}-s}
\int_{1}^{\frac{h}{4 |x_1-y_1|} }T^{{\alphyy}-1-s}\log T\,dT\\&&\qquad\le C\,
|x_1-y_1|^{{\alphyy}-s}
\int_{1}^{\frac{h}{4 |x_1-y_1|} }T^{\underline{\alphyy}-1-s}\,dT\le
\frac{C\,|x_1-y_1|^{{\alphyy}-s}}{\underline{\alphyy}-s} \left(
{\frac{h}{4 |x_1-y_1|} }
\right)^{\underline{\alphyy}-s} \\&&\qquad=
\frac{C \,h^{\underline{\alphyy}-s}}{\underline{\alphyy}-s}\,
|x_1-y_1|^{{\alphyy}-\underline{{\alphyy}}},
\end{eqnarray*}
with~$C>0$,
which, together with the definition of~$\kappa({\alphyy})$
in~\eqref{BSC}, completes the proof of~\eqref{LOGIN}.

Then, inserting~\eqref{LOGIN} into~\eqref{LOGIN0}, we conclude that
$$ \left|\int_I \delta u(x_1,t)\,\big({\mathcal{K}}(x_1,t)-{\mathcal{K}}(y_1,t)\big)\,dt\right|
\le C_{{\alphyy},s}\,
|x_1-y_1|^{\kappa({\alphyy})},$$
up to renaming~$C_{{\alphyy},s}$.

Then, the latter estimate and~\eqref{6zwe73737HS} give that
\begin{equation}\label{LASTIMASUf2}
\begin{split}&
|f_2(x_1)-f_2(y_1)|\\ =\;&\left|
\int_I \Big(\delta u(x_1,t)\,{\mathcal{K}}(x_1,t)
-\delta u(y_1,t)\,{\mathcal{K}}(y_1,t)\Big)\,dt
\right|\\ \le\;&
\left|
\int_I \big(\delta u(x_1,t)-\delta u(y_1,t)\big)\,{\mathcal{K}}(x_1,t)\Big)\,dt
\right|
+\left|
\int_I  \delta u(y_1,t)\,\big( {\mathcal{K}}(x_1,t)
-{\mathcal{K}}(y_1,t)\big)\,dt
\right|\\ \le\;&C_{{\alphyy},s}\,
|x_1-y_1|^{\kappa({\alphyy})},
\end{split}
\end{equation}
up to renaming~$C_{{\alphyy},s}$.

Now we estimate~$f_3$. To this end, we observe that
\begin{equation}\label{GIOAchdfuwjs}
\|f_3\|_{L^\infty\left(\left[0,\frac{h}{4}\right]\right)}\le
\|F'\|_{L^\infty(\R)}\,
\int_{\R\setminus I} \Big(2\|u\|_{L^\infty\left([0,h]\right)}+
\|u'\|_{L^\infty\left(\left[0,\frac{h}{2}\right]\right)}\,|t|\Big)
\,\frac{ dt}{|t|^{2+s}}\le C_{{\alphyy},s},
\end{equation}
up to renaming~$C_{{\alphyy},s}$.

We also point out that~$\|v\|_{C^1(\R)}\le
\|u\|_{C^1\left(\left[0,\frac{h}{2}\right]\right)}$.
Therefore, setting
$$ V(x_1):=
\int_{\R\setminus I} \Big(v(x_1+ t)-u(x_1)-u'(x_1)\,t\Big)
\,\frac{ dt}{|t|^{2+s}},$$
we see that
\begin{equation}\label{89-29-1}
\begin{split}&
|V(x_1)-V(y_1)|\\ \le\;&
\int_{\R\setminus I} \Big(\|v\|_{C^1(\R)}|x_1-y_1|+\|u\|_{C^1\left(\left[0,\frac{h}{2}\right]\right)}
|x_1-y_1|+
\|u\|_{C^{1,{\alphyy}}\left(\left[0,\frac{h}{2}\right]\right)}
|x_1-y_1|^{\alphyy}\,|t|\Big)
\,\frac{ dt}{|t|^{2+s}}\\
\le\;&C_{{\alphyy},s}\, |x_1-y_1|^{\alphyy}
\end{split}
\end{equation}
up to renaming~$C_{{\alphyy},s}$.

Also, as in~\eqref{GIOAchdfuwjs}, one can write that
\begin{equation}\label{89-29-2}
\|V\|_{L^\infty\left(\left[0,\frac{h}{4}\right]\right)}\le C_{{\alphyy},s}.
\end{equation}
In addition,
\begin{eqnarray*}&&
\big| F'\big( u'(x_1)\big)-F'\big( u'(y_1)\big)\big|\le
\| F''\|_{L^\infty(\R)}\,\left| u'(x_1)-u'(y_1)\right|\\&&\qquad
\le
\| F''\|_{L^\infty(\R)}\,
\|u\|_{C^1\left(\left[0,\frac{h}{2}\right]\right)}\,|x_1-y_1|^{\alphyy}
.\end{eqnarray*}
{F}rom this, \eqref{89-29-1} and~\eqref{89-29-2}, we see that
\begin{eqnarray*}
|f_3(x_1)-f_3(y_1)|&=&\big|F'\big( u'(x_1)\big)\,V(x_1)-F'\big( u'(y_1)\big)\,V(y_1)\big|\\
&\le& \big|F'\big( u'(x_1)\big)-F'\big( u'(y_1)\big)\big|\,|V(x_1)|+
\big|F'\big( u'(y_1)\big)\big|\,\big|V(x_1)-V(y_1)\big|\\&\le& C_{{\alphyy},s}\,|x_1-y_1|^{{\alphyy}},
\end{eqnarray*}
up to renaming~$C_{{\alphyy},s}$.

Using this, \eqref{F1est1lin}, \eqref{Completaf1},
\eqref{2qasduquuasd}, \eqref{LASTIMASUf2} and~\eqref{GIOAchdfuwjs},
we obtain~\eqref{BSC}, as desired.
\end{proof}

\end{appendix}

\section*{Acknowledgments}

The first and third authors are member of INdAM and AustMS, and they
are supported by the Australian Research Council
Discovery Project DP170104880 NEW ``Nonlocal Equations at Work''.
The first author's visit to Columbia has been partially funded by
the Fulbright Foundation
and the Australian Research Council DECRA DE180100957
``PDEs, free boundaries and applications''. The second author is supported
by the National Science Foundation grant DMS-1500438.

\begin{bibdiv}
\begin{biblist}

\bib{MR2765717}{article}{
   author={Ambrosio, Luigi},
   author={De Philippis, Guido},
   author={Martinazzi, Luca},
   title={Gamma-convergence of nonlocal perimeter functionals},
   journal={Manuscripta Math.},
   volume={134},
   date={2011},
   number={3-4},
   pages={377--403},
   issn={0025-2611},
   review={\MR{2765717}},
   doi={10.1007/s00229-010-0399-4},
}

\bib{MR3331523}{article}{
   author={Barrios, Bego\~{n}a},
   author={Figalli, Alessio},
   author={Valdinoci, Enrico},
   title={Bootstrap regularity for integro-differential operators and its
   application to nonlocal minimal surfaces},
   journal={Ann. Sc. Norm. Super. Pisa Cl. Sci. (5)},
   volume={13},
   date={2014},
   number={3},
   pages={609--639},
   issn={0391-173X},
   review={\MR{3331523}},
}

\bib{MR3893441}{article}{
   author={Bonito, Andrea},
   author={Borthagaray, Juan Pablo},
   author={Nochetto, Ricardo H.},
   author={Ot\'{a}rola, Enrique},
   author={Salgado, Abner J.},
   title={Numerical methods for fractional diffusion},
   journal={Comput. Vis. Sci.},
   volume={19},
   date={2018},
   number={5-6},
   pages={19--46},
   issn={1432-9360},
   review={\MR{3893441}},
   doi={10.1007/s00791-018-0289-y},
}

\bib{NOCHETTO}{article}{
       author = {Borthagaray, Juan Pablo},
       author = {Li, Wenbo},
       author = {Nochetto, Ricardo H.},
        title = {Finite element discretizations of nonlocal minimal graphs: convergence},
      journal = {arXiv e-prints},
     date = {2019},
       eprint = {1905.06395},
       adsurl = {https://ui.adsabs.harvard.edu/abs/2019arXiv190506395B},
 }

\bib{MR3413590}{article}{
   author={Bosch, Jessica},
   author={Stoll, Martin},
   title={A fractional inpainting model based on the vector-valued
   Cahn-Hilliard equation},
   journal={SIAM J. Imaging Sci.},
   volume={8},
   date={2015},
   number={4},
   pages={2352--2382},
   issn={1936-4954},
   review={\MR{3413590}},
   doi={10.1137/15M101405X},
}

\bib{MR1945278}{article}{
   author={Bourgain, Jean},
   author={Brezis, Ha\"{i}m},
   author={Mironescu, Petru},
   title={Limiting embedding theorems for $W^{s,p}$ when $s\uparrow1$ and
   applications},
   note={Dedicated to the memory of Thomas H. Wolff},
   journal={J. Anal. Math.},
   volume={87},
   date={2002},
   pages={77--101},
   issn={0021-7670},
   review={\MR{1945278}},
   doi={10.1007/BF02868470},
}

\bib{MR3461641}{article}{
   author={Bucur, Claudia},
   title={Some observations on the Green function for the ball in the
   fractional Laplace framework},
   journal={Commun. Pure Appl. Anal.},
   volume={15},
   date={2016},
   number={2},
   pages={657--699},
   issn={1534-0392},
   review={\MR{3461641}},
   doi={10.3934/cpaa.2016.15.657},
}

\bib{CLAUDIALUCA}{article}{
   author={Bucur, Claudia},
   author={Lombardini, Luca},
   author={Valdinoci, Enrico},
   title={Complete stickiness of nonlocal minimal surfaces
   for small values of the fractional parameter},
    journal={Ann. Inst. H. Poincar\'{e} Anal. Non Lin\'{e}aire}
   volume={36},
   date={2019},
   number={3},
   pages={655--703},
   doi={10.1016/j.anihpc.2018.08.003},
}  

\bib{MR2644786}{article}{
   author={Cabr\'{e}, Xavier},
   author={Cinti, Eleonora},
   title={Energy estimates and 1-D symmetry for nonlinear equations
   involving the half-Laplacian},
   journal={Discrete Contin. Dyn. Syst.},
   volume={28},
   date={2010},
   number={3},
   pages={1179--1206},
   issn={1078-0947},
   review={\MR{2644786}},
   doi={10.3934/dcds.2010.28.1179},
} 

\bib{MR3148114}{article}{
   author={Cabr\'{e}, Xavier},
   author={Cinti, Eleonora},
   title={Sharp energy estimates for nonlinear fractional diffusion
   equations},
   journal={Calc. Var. Partial Differential Equations},
   volume={49},
   date={2014},
   number={1-2},
   pages={233--269},
   issn={0944-2669},
   review={\MR{3148114}},
   doi={10.1007/s00526-012-0580-6},
}

\bib{CCC}{article}{
       author = {Cabr\'e, Xavier},
       author = {Cinti, Eleonora},
       author = {Serra, Joaquim},
        title = {Stable $s$-minimal cones in $\R^3$ are flat for $s\sim 1$},
        journal={J. Reine Angew. Math.},
        doi={10.1515/crelle-2019-0005},
}
        
\bib{COZCAB}{article}{
author={Cabr\'{e}, Xavier},
   author={Cozzi, Matteo},
   title={A gradient estimate for nonlocal minimal graphs},
   journal={Duke Math. J.},
   volume={168},
   date={2019},
   number={5},
   pages={775--848},
   issn={0012-7094},
   review={\MR{3934589}},
   doi={10.1215/00127094-2018-0052},
}        

\bib{MR3744919}{article}{
   author={Cabr\'{e}, Xavier},
   author={Fall, Mouhamed Moustapha},
   author={Weth, Tobias},
   title={Delaunay hypersurfaces with constant nonlocal mean curvature},
   language={English, with English and French summaries},
   journal={J. Math. Pures Appl. (9)},
   volume={110},
   date={2018},
   pages={32--70},
   issn={0021-7824},
   review={\MR{3744919}},
   doi={10.1016/j.matpur.2017.07.005},
}
		
\bib{MR3770173}{article}{
   author={Cabr\'{e}, Xavier},
   author={Fall, Mouhamed Moustapha},
   author={Weth, Tobias},
   title={Near-sphere lattices with constant nonlocal mean curvature},
   journal={Math. Ann.},
   volume={370},
   date={2018},
   number={3-4},
   pages={1513--1569},
   issn={0025-5831},
   review={\MR{3770173}},
   doi={10.1007/s00208-017-1559-6},
}

\bib{MR3881478}{article}{
   author={Cabr\'{e}, Xavier},
   author={Fall, Mouhamed Moustapha},
   author={Sol\`a-Morales, Joan},
   author={Weth, Tobias},
   title={Curves and surfaces with constant nonlocal mean curvature: meeting
   Alexandrov and Delaunay},
   journal={J. Reine Angew. Math.},
   volume={745},
   date={2018},
   pages={253--280},
   issn={0075-4102},
   review={\MR{3881478}},
   doi={10.1515/crelle-2015-0117},
}

\bib{MR3280032}{article}{
   author={Cabr\'{e}, Xavier},
   author={Sire, Yannick},
   title={Nonlinear equations for fractional Laplacians II: Existence,
   uniqueness, and qualitative properties of solutions},
   journal={Trans. Amer. Math. Soc.},
   volume={367},
   date={2015},
   number={2},
   pages={911--941},
   issn={0002-9947},
   review={\MR{3280032}},
   doi={10.1090/S0002-9947-2014-05906-0},
}

\bib{MR2177165}{article}{
   author={Cabr\'{e}, Xavier},
   author={Sol\`a-Morales, Joan},
   title={Layer solutions in a half-space for boundary reactions},
   journal={Comm. Pure Appl. Math.},
   volume={58},
   date={2005},
   number={12},
   pages={1678--1732},
   issn={0010-3640},
   review={\MR{2177165}},
   doi={10.1002/cpa.20093},
}
		
\bib{MR3532394}{article}{
   author={Caffarelli, L.},
   author={De Silva, D.},
   author={Savin, O.},
   title={Obstacle-type problems for minimal surfaces},
   journal={Comm. Partial Differential Equations},
   volume={41},
   date={2016},
   number={8},
   pages={1303--1323},
   issn={0360-5302},
   review={\MR{3532394}},
   doi={10.1080/03605302.2016.1192646},
}

\bib{MR2675483}{article}{
   author={Caffarelli, L.},
   author={Roquejoffre, J.-M.},
   author={Savin, O.},
   title={Nonlocal minimal surfaces},
   journal={Comm. Pure Appl. Math.},
   volume={63},
   date={2010},
   number={9},
   pages={1111--1144},
   issn={0010-3640},
   review={\MR{2675483}},
   doi={10.1002/cpa.20331},
}

\bib{MR3390089}{article}{
   author={Caffarelli, Luis},
   author={Savin, Ovidiu},
   author={Valdinoci, Enrico},
   title={Minimization of a fractional perimeter-Dirichlet integral
   functional},
   journal={Ann. Inst. H. Poincar\'{e} Anal. Non Lin\'{e}aire},
   volume={32},
   date={2015},
   number={4},
   pages={901--924},
   issn={0294-1449},
   review={\MR{3390089}},
   doi={10.1016/j.anihpc.2014.04.004},
}

\bib{MR2564467}{article}{
   author={Caffarelli, Luis A.},
   author={Souganidis, Panagiotis E.},
   title={Convergence of nonlocal threshold dynamics approximations to front
   propagation},
   journal={Arch. Ration. Mech. Anal.},
   volume={195},
   date={2010},
   number={1},
   pages={1--23},
   issn={0003-9527},
   review={\MR{2564467}},
   doi={10.1007/s00205-008-0181-x},
}

\bib{MR2782803}{article}{
   author={Caffarelli, Luis},
   author={Valdinoci, Enrico},
   title={Uniform estimates and limiting arguments for nonlocal minimal
   surfaces},
   journal={Calc. Var. Partial Differential Equations},
   volume={41},
   date={2011},
   number={1-2},
   pages={203--240},
   issn={0944-2669},
   review={\MR{2782803}},
   doi={10.1007/s00526-010-0359-6},
}

\bib{MR3107529}{article}{
   author={Caffarelli, Luis},
   author={Valdinoci, Enrico},
   title={Regularity properties of nonlocal minimal surfaces via limiting
   arguments},
   journal={Adv. Math.},
   volume={248},
   date={2013},
   pages={843--871},
   issn={0001-8708},
   review={\MR{3107529}},
   doi={10.1016/j.aim.2013.08.007},
}

\bib{MR3401008}{article}{
   author={Chambolle, Antonin},
   author={Morini, Massimiliano},
   author={Ponsiglione, Marcello},
   title={Nonlocal curvature flows},
   journal={Arch. Ration. Mech. Anal.},
   volume={218},
   date={2015},
   number={3},
   pages={1263--1329},
   issn={0003-9527},
   review={\MR{3401008}},
   doi={10.1007/s00205-015-0880-z},
}

\bib{MR3713894}{article}{
   author={Chambolle, Antonin},
   author={Novaga, Matteo},
   author={Ruffini, Berardo},
   title={Some results on anisotropic fractional mean curvature flows},
   journal={Interfaces Free Bound.},
   volume={19},
   date={2017},
   number={3},
   pages={393--415},
   issn={1463-9963},
   review={\MR{3713894}},
   doi={10.4171/IFB/387},
}

\bib{MR3778164}{article}{
   author={Cinti, Eleonora},
   author={Sinestrari, Carlo},
   author={Valdinoci, Enrico},
   title={Neckpinch singularities in fractional mean curvature flows},
   journal={Proc. Amer. Math. Soc.},
   volume={146},
   date={2018},
   number={6},
   pages={2637--2646},
   issn={0002-9939},
   review={\MR{3778164}},
   doi={10.1090/proc/14002},
}

\bib{MR3836150}{article}{
   author={Ciraolo, Giulio},
   author={Figalli, Alessio},
   author={Maggi, Francesco},
   author={Novaga, Matteo},
   title={Rigidity and sharp stability estimates for hypersurfaces with
   constant and almost-constant nonlocal mean curvature},
   journal={J. Reine Angew. Math.},
   volume={741},
   date={2018},
   pages={275--294},
   issn={0075-4102},
   review={\MR{3836150}},
   doi={10.1515/crelle-2015-0088},
}

\bib{MR3652519}{article}{
   author={Cozzi, Matteo},
   author={Dipierro, Serena},
   author={Valdinoci, Enrico},
   title={Planelike interfaces in long-range Ising models and connections
   with nonlocal minimal surfaces},
   journal={J. Stat. Phys.},
   volume={167},
   date={2017},
   number={6},
   pages={1401--1451},
   issn={0022-4715},
   review={\MR{3652519}},
   doi={10.1007/s10955-017-1783-1},
}

\bib{MR3588123}{article}{
   author={Cozzi, Matteo},
   author={Figalli, Alessio},
   title={Regularity theory for local and nonlocal minimal surfaces: an
   overview},
   conference={
      title={Nonlocal and nonlinear diffusions and interactions: new methods
      and directions},
   },
   book={
      series={Lecture Notes in Math.},
      volume={2186},
      publisher={Springer, Cham},
   },
   date={2017},
   pages={117--158},
   review={\MR{3588123}},
}

\bib{FMOS}{article}{
       author = {Cozzi, Matteo},
       author = {Farina, Alberto},
       author = {Lombardini, Luca},
        title = {{B}ernstein-{M}oser-type results for nonlocal minimal graphs},
      journal = {Comm. Anal. Geom.},
}

\bib{BV}{article}{
   author={Cinti, Eleonora},
   author={Serra, Joaquim},
   author={Valdinoci, Enrico},
   title={Quantitative flatness results and $BV$-estimates for stable
   nonlocal minimal surfaces},
   journal={J. Differential Geom.},
   volume={112},
   date={2019},
   number={3},
   pages={447--504},
   issn={0022-040X},
   review={\MR{3981295}},
   doi={10.4310/jdg/1563242471},
}

\bib{MR1942130}{article}{
   author={D\'{a}vila, J.},
   title={On an open question about functions of bounded variation},
   journal={Calc. Var. Partial Differential Equations},
   volume={15},
   date={2002},
   number={4},
   pages={519--527},
   issn={0944-2669},
   review={\MR{1942130}},
   doi={10.1007/s005260100135},
}

\bib{MR3485130}{article}{
   author={D\'{a}vila, Juan},
   author={del Pino, Manuel},
   author={Dipierro, Serena},
   author={Valdinoci, Enrico},
   title={Nonlocal Delaunay surfaces},
   journal={Nonlinear Anal.},
   volume={137},
   date={2016},
   pages={357--380},
   issn={0362-546X},
   review={\MR{3485130}},
   doi={10.1016/j.na.2015.10.009},
}

\bib{MR3412379}{article}{
   author={Di Castro, Agnese},
   author={Novaga, Matteo},
   author={Ruffini, Berardo},
   author={Valdinoci, Enrico},
   title={Nonlocal quantitative isoperimetric inequalities},
   journal={Calc. Var. Partial Differential Equations},
   volume={54},
   date={2015},
   number={3},
   pages={2421--2464},
   issn={0944-2669},
   review={\MR{3412379}},
   doi={10.1007/s00526-015-0870-x},
}

\bib{MR3007726}{article}{
   author={Dipierro, Serena},
   author={Figalli, Alessio},
   author={Palatucci, Giampiero},
   author={Valdinoci, Enrico},
   title={Asymptotics of the $s$-perimeter as $s\searrow0$},
   journal={Discrete Contin. Dyn. Syst.},
   volume={33},
   date={2013},
   number={7},
   pages={2777--2790},
   issn={1078-0947},
   review={\MR{3007726}},
   doi={10.3934/dcds.2013.33.2777},
}

\bib{MR3678490}{article}{
   author={Dipierro, Serena},
   author={Karakhanyan, Aram},
   author={Valdinoci, Enrico},
   title={A class of unstable free boundary problems},
   journal={Anal. PDE},
   volume={10},
   date={2017},
   number={6},
   pages={1317--1359},
   issn={2157-5045},
   review={\MR{3678490}},
   doi={10.2140/apde.2017.10.1317},
}

\bib{MR3707346}{article}{
   author={Dipierro, Serena},
   author={Maggi, Francesco},
   author={Valdinoci, Enrico},
   title={Asymptotic expansions of the contact angle in nonlocal capillarity
   problems},
   journal={J. Nonlinear Sci.},
   volume={27},
   date={2017},
   number={5},
   pages={1531--1550},
   issn={0938-8974},
   review={\MR{3707346}},
   doi={10.1007/s00332-017-9378-1},
}

\bib{MR3427047}{article}{
   author={Dipierro, Serena},
   author={Savin, Ovidiu},
   author={Valdinoci, Enrico},
   title={A nonlocal free boundary problem},
   journal={SIAM J. Math. Anal.},
   volume={47},
   date={2015},
   number={6},
   pages={4559--4605},
   issn={0036-1410},
   review={\MR{3427047}},
   doi={10.1137/140999712},
}

\bib{MR3516886}{article}{
   author={Dipierro, Serena},
   author={Savin, Ovidiu},
   author={Valdinoci, Enrico},
   title={Graph properties for nonlocal minimal surfaces},
   journal={Calc. Var. Partial Differential Equations},
   volume={55},
   date={2016},
   number={4},
   pages={Art. 86, 25},
   issn={0944-2669},
   review={\MR{3516886}},
   doi={10.1007/s00526-016-1020-9},
}

\bib{MR3596708}{article}{
   author={Dipierro, Serena},
   author={Savin, Ovidiu},
   author={Valdinoci, Enrico},
   title={Boundary behavior of nonlocal minimal surfaces},
   journal={J. Funct. Anal.},
   volume={272},
   date={2017},
   number={5},
   pages={1791--1851},
   issn={0022-1236},
   review={\MR{3596708}},
   doi={10.1016/j.jfa.2016.11.016},
}

\bib{REVI}{article}{
   author={Dipierro, Serena},
   author={Savin, Ovidiu},
   author={Valdinoci, Enrico},
   title={Definition of fractional Laplacian for functions with polynomial
   growth},
   journal={Rev. Mat. Iberoam.},
   volume={35},
   date={2019},
   number={4},
   pages={1079--1122},
   issn={0213-2230},
   review={\MR{3988080}},
   doi={10.4171/rmi/1079},
}

\bib{CRdsafgCR}{article}{
   author={Dipierro, Serena},
   author={Savin, Ovidiu},
   author={Valdinoci, Enrico},
   title={Boundary properties of fractional objects: flexibility of linear equations and rigidity of minimal graphs},
      journal={J. Reine Angew. Math.},
   }

\bib{DEGIO}{article}{
   author={Dipierro, Serena},
   author={Serra, Joaquim},
   author={Valdinoci, Enrico},
title={Improvement of flatness for nonlocal phase transitions},
journal={Amer. J. Math.},
}

\bib{MR3712006}{article}{
   author={Dipierro, Serena},
   author={Valdinoci, Enrico},
   title={Continuity and density results for a one-phase nonlocal free
   boundary problem},
   journal={Ann. Inst. H. Poincar\'{e} Anal. Non Lin\'{e}aire},
   volume={34},
   date={2017},
   number={6},
   pages={1387--1428},
   issn={0294-1449},
   review={\MR{3712006}},
   doi={10.1016/j.anihpc.2016.11.001},
}

\bib{MR3824212}{article}{
   author={Dipierro, Serena},
   author={Valdinoci, Enrico},
   title={Nonlocal minimal surfaces: interior regularity, quantitative
   estimates and boundary stickiness},
   conference={
      title={Recent developments in nonlocal theory},
   },
   book={
      publisher={De Gruyter, Berlin},
   },
   date={2018},
   pages={165--209},
   review={\MR{3824212}},
}

\bib{FAR}{article}{
       author = {Farina, Alberto},
       author = {Valdinoci, Enrico},
        title = {Flatness results for nonlocal minimal cones and subgraphs},
        journal={Ann. Sc. Norm. Super. Pisa Cl. Sci. (5)},
}

\bib{MR3831283}{article}{
   author={Fern\'{a}ndez-Real, Xavier},
   author={Ros-Oton, Xavier},
   title={The obstacle problem for the fractional Laplacian with critical
   drift},
   journal={Math. Ann.},
   volume={371},
   date={2018},
   number={3-4},
   pages={1683--1735},
   issn={0025-5831},
   review={\MR{3831283}},
   doi={10.1007/s00208-017-1600-9},
}

\bib{MR3322379}{article}{
   author={Figalli, A.},
   author={Fusco, N.},
   author={Maggi, F.},
   author={Millot, V.},
   author={Morini, M.},
   title={Isoperimetry and stability properties of balls with respect to
   nonlocal energies},
   journal={Comm. Math. Phys.},
   volume={336},
   date={2015},
   number={1},
   pages={441--507},
   issn={0010-3616},
   review={\MR{3322379}},
   doi={10.1007/s00220-014-2244-1},
}

\bib{FSERRA}{article}{
   author={Figalli, Alessio},
   author={Serra, Joaquim},
   title={On stable solutions for boundary reactions: a De Giorgi-type
   result in dimension $4+1$},
   journal={Invent. Math.},
   volume={219},
   date={2020},
   number={1},
   pages={153--177},
   issn={0020-9910},
   review={\MR{4050103}},
   doi={10.1007/s00222-019-00904-2},
}

\bib{MR3680376}{article}{
   author={Figalli, Alessio},
   author={Valdinoci, Enrico},
   title={Regularity and Bernstein-type results for nonlocal minimal
   surfaces},
   journal={J. Reine Angew. Math.},
   volume={729},
   date={2017},
   pages={263--273},
   issn={0075-4102},
   review={\MR{3680376}},
   doi={10.1515/crelle-2015-0006},
}

\bib{MR2469027}{article}{
   author={Frank, Rupert L.},
   author={Seiringer, Robert},
   title={Non-linear ground state representations and sharp Hardy
   inequalities},
   journal={J. Funct. Anal.},
   volume={255},
   date={2008},
   number={12},
   pages={3407--3430},
   issn={0022-1236},
   review={\MR{2469027}},
   doi={10.1016/j.jfa.2008.05.015},
}

\bib{MR2799577}{article}{
   author={Fusco, Nicola},
   author={Millot, Vincent},
   author={Morini, Massimiliano},
   title={A quantitative isoperimetric inequality for fractional perimeters},
   journal={J. Funct. Anal.},
   volume={261},
   date={2011},
   number={3},
   pages={697--715},
   issn={0022-1236},
   review={\MR{2799577}},
   doi={10.1016/j.jfa.2011.02.012},
}

\bib{GUI}{article}{
   author={Gui, Changfeng},
   author={Li, Qinfeng},
   title={Some energy estimates for stable solutions to fractional
   Allen--Cahn equations},
   journal={Calc. Var. Partial Differential Equations},
   volume={59},
   date={2020},
   number={2},
   pages={Paper No. 49},
   issn={0944-2669},
   review={\MR{4064339}},
   doi={10.1007/s00526-020-1701-2},
}

\bib{MR3276603}{article}{
   author={Grubb, Gerd},
   title={Fractional Laplacians on domains, a development of H\"{o}rmander's
   theory of $\mu$-transmission pseudodifferential operators},
   journal={Adv. Math.},
   volume={268},
   date={2015},
   pages={478--528},
   issn={0001-8708},
   review={\MR{3276603}},
   doi={10.1016/j.aim.2014.09.018},
}

\bib{MR3293447}{article}{
   author={Grubb, Gerd},
   title={Local and nonlocal boundary conditions for $\mu$-transmission and
   fractional elliptic pseudodifferential operators},
   journal={Anal. PDE},
   volume={7},
   date={2014},
   number={7},
   pages={1649--1682},
   issn={2157-5045},
   review={\MR{3293447}},
   doi={10.2140/apde.2014.7.1649},
}

\bib{MR3610941}{article}{
   author={Hamel, Fran\c{c}ois},
   author={Ros-Oton, Xavier},
   author={Sire, Yannick},
   author={Valdinoci, Enrico},
   title={A one-dimensional symmetry result for a class of nonlocal
   semilinear equations in the plane},
   journal={Ann. Inst. H. Poincar\'{e} Anal. Non Lin\'{e}aire},
   volume={34},
   date={2017},
   number={2},
   pages={469--482},
   issn={0294-1449},
   review={\MR{3610941}},
   doi={10.1016/j.anihpc.2016.01.001},
}

\bib{MR2487027}{article}{
   author={Imbert, Cyril},
   title={Level set approach for fractional mean curvature flows},
   journal={Interfaces Free Bound.},
   volume={11},
   date={2009},
   number={1},
   pages={153--176},
   issn={1463-9963},
   review={\MR{2487027}},
   doi={10.4171/IFB/207},
}

\bib{MR3827804}{article}{
   author={Lombardini, Luca},
   title={Approximation of sets of finite fractional perimeter by smooth
   sets and comparison of local and global $s$-minimal surfaces},
   journal={Interfaces Free Bound.},
   volume={20},
   date={2018},
   number={2},
   pages={261--296},
   issn={1463-9963},
   review={\MR{3827804}},
   doi={10.4171/IFB/402},
}

\bib{MR3161386}{article}{
   author={Ludwig, Monika},
   title={Anisotropic fractional perimeters},
   journal={J. Differential Geom.},
   volume={96},
   date={2014},
   number={1},
   pages={77--93},
   issn={0022-040X},
   review={\MR{3161386}},
}

\bib{MR3717439}{article}{
   author={Maggi, Francesco},
   author={Valdinoci, Enrico},
   title={Capillarity problems with nonlocal surface tension energies},
   journal={Comm. Partial Differential Equations},
   volume={42},
   date={2017},
   number={9},
   pages={1403--1446},
   issn={0360-5302},
   review={\MR{3717439}},
   doi={10.1080/03605302.2017.1358277},
}

\bib{MR1940355}{article}{
   author={Maz\cprime ya, V.},
   author={Shaposhnikova, T.},
   title={On the Bourgain, Brezis, and Mironescu theorem concerning limiting
   embeddings of fractional Sobolev spaces},
   journal={J. Funct. Anal.},
   volume={195},
   date={2002},
   number={2},
   pages={230--238},
   issn={0022-1236},
   review={\MR{1940355}},
   doi={10.1006/jfan.2002.3955},
}

\bib{MR3900821}{article}{
   author={Millot, Vincent},
   author={Sire, Yannick},
   author={Wang, Kelei},
   title={Asymptotics for the fractional Allen-Cahn equation and stationary
   nonlocal minimal surfaces},
   journal={Arch. Ration. Mech. Anal.},
   volume={231},
   date={2019},
   number={2},
   pages={1129--1216},
   issn={0003-9527},
   review={\MR{3900821}},
   doi={10.1007/s00205-018-1296-3},
}

\bib{MR3168912}{article}{
   author={Ros-Oton, Xavier},
   author={Serra, Joaquim},
   title={The Dirichlet problem for the fractional Laplacian: regularity up
   to the boundary},
   language={English, with English and French summaries},
   journal={J. Math. Pures Appl. (9)},
   volume={101},
   date={2014},
   number={3},
   pages={275--302},
   issn={0021-7824},
   review={\MR{3168912}},
   doi={10.1016/j.matpur.2013.06.003},
}

\bib{SAEZ}{article}{
   author={S\'{a}ez, Mariel},
   author={Valdinoci, Enrico},
   title={On the evolution by fractional mean curvature},
   journal={Comm. Anal. Geom.},
   volume={27},
   date={2019},
   number={1},
   pages={211--249},
   issn={1019-8385},
   review={\MR{3951024}},
   doi={10.4310/CAG.2019.v27.n1.a6},
}
		
\bib{MR3812860}{article}{
   author={Savin, Ovidiu},
   title={Rigidity of minimizers in nonlocal phase transitions},
   journal={Anal. PDE},
   volume={11},
   date={2018},
   number={8},
   pages={1881--1900},
   issn={2157-5045},
   review={\MR{3812860}},
   doi={10.2140/apde.2018.11.1881},
}

\bib{SAV2}{article}{
   author={Savin, Ovidiu},
   title={Rigidity of minimizers in nonlocal phase transitions II},
   journal={Anal. Theory Appl.},
   volume={35},
   date={2019},
   number={1},
   pages={1--27},
   issn={1672-4070},
   review={\MR{3939768}},
   doi={10.4208/ata.oa-0008},
}

\bib{MR2948285}{article}{
   author={Savin, Ovidiu},
   author={Valdinoci, Enrico},
   title={$\Gamma$-convergence for nonlocal phase transitions},
   journal={Ann. Inst. H. Poincar\'{e} Anal. Non Lin\'{e}aire},
   volume={29},
   date={2012},
   number={4},
   pages={479--500},
   issn={0294-1449},
   review={\MR{2948285}},
   doi={10.1016/j.anihpc.2012.01.006},
}

\bib{MR3090533}{article}{
   author={Savin, Ovidiu},
   author={Valdinoci, Enrico},
   title={Regularity of nonlocal minimal cones in dimension 2},
   journal={Calc. Var. Partial Differential Equations},
   volume={48},
   date={2013},
   number={1-2},
   pages={33--39},
   issn={0944-2669},
   review={\MR{3090533}},
   doi={10.1007/s00526-012-0539-7},
}

\bib{MR3133422}{article}{
   author={Savin, Ovidiu},
   author={Valdinoci, Enrico},
   title={Density estimates for a variational model driven by the Gagliardo
   norm},
   language={English, with English and French summaries},
   journal={J. Math. Pures Appl. (9)},
   volume={101},
   date={2014},
   number={1},
   pages={1--26},
   issn={0021-7824},
   review={\MR{3133422}},
   doi={10.1016/j.matpur.2013.05.001},
}
		
\bib{MR2707618}{book}{
   author={Silvestre, Luis Enrique},
   title={Regularity of the obstacle problem for a fractional power of the
   Laplace operator},
   note={Thesis (Ph.D.)--The University of Texas at Austin},
   publisher={ProQuest LLC, Ann Arbor, MI},
   date={2005},
   pages={95},
   isbn={978-0542-25310-2},
   review={\MR{2707618}},
}

\bib{MR2498561}{article}{
   author={Sire, Yannick},
   author={Valdinoci, Enrico},
   title={Fractional Laplacian phase transitions and boundary reactions: a
   geometric inequality and a symmetry result},
   journal={J. Funct. Anal.},
   volume={256},
   date={2009},
   number={6},
   pages={1842--1864},
   issn={0022-1236},
   review={\MR{2498561}},
   doi={10.1016/j.jfa.2009.01.020},
}

\end{biblist}
\end{bibdiv}
\end{document}